\documentclass[11pt]{amsart}

\usepackage{epigamath}


\usepackage[english]{babel}


\numberwithin{equation}{section}


\usepackage{enumitem}

\usepackage{amscd, amssymb, amsmath, mathrsfs}
\usepackage[english]{babel}
\usepackage{booktabs}
\usepackage{tikz-cd}
\usepackage{url}


\setlist[enumerate,1]{label={\rm(\roman*)}, ref={\rm\roman*}}

\newtheorem{theorem}{Theorem}[section]
\newtheorem*{maintheorem}{Theorem}
\newtheorem{lemma}[theorem]{Lemma}
\newtheorem{proposition}[theorem]{Proposition}
\newtheorem{corollary}[theorem]{Corollary}

\theoremstyle{definition}
\newtheorem{definition}[theorem]{Definition}

\theoremstyle{remark}


\newcommand{\NN}{\mathbb{N}}
\newcommand{\ZZ}{\mathbb{Z}}

\newcommand{\FF}{\mathbb{F}}

\newcommand{\PP}{\mathbb{P}}
\renewcommand{\AA}{\mathbb{A}}
\newcommand{\GG}{\mathbb{G}}

\newcommand{\ideala}{\mathfrak{a}}

\newcommand{\shN}{\mathscr{N}}
\newcommand{\shL}{\mathscr{L}}
\newcommand{\shP}{\mathscr{P}}

\newcommand{\shT}{\mathscr{P}}

\newcommand{\shZ}{\mathscr{Z}}

\newcommand{\catC}{\mathcal{C}}


\newcommand{\Ad}{\operatorname{Ad}}

\newcommand{\Aff}{\text{\rm Aff}}
\newcommand{\alg}{\text{\rm alg}}

\newcommand{\Aut}{\operatorname{Aut}}

\newcommand{\Bl}{\operatorname{Bl}}

\newcommand{\Card}{\operatorname{Card}}

\newcommand{\Der}{\operatorname{Der}}

\newcommand{\edim}{\operatorname{edim}}

\newcommand{\Fil}{\operatorname{Fil}}

\newcommand{\Frac}{\operatorname{Frac}}

\newcommand{\GL}{\operatorname{GL}}
\newcommand{\gl}{\operatorname{\mathfrak{gl}}}

\newcommand{\grp}{\operatorname{grp}}

\newcommand{\id}{{\operatorname{id}}}

\newcommand{\Kernel}{\operatorname{Ker}}

\newcommand{\Lie}{\operatorname{Lie}}

\newcommand{\lra}{\longrightarrow}

\newcommand{\Mat}{\operatorname{Mat}}

\newcommand{\maxid}{\mathfrak{m}}

\newcommand{\Nil}{\operatorname{Nil}}

\newcommand{\primid}{\mathfrak{p}}
\renewcommand{\O}{\mathscr{O}}

\newcommand{\op}{\text{\rm op}}

\newcommand{\PGL}{\operatorname{PGL}}

\newcommand{\pr}{\operatorname{pr}}
\newcommand{\Proj}{\operatorname{Proj}}

\newcommand{\quadand}{\quad\text{and}\quad}

\newcommand{\ra}{\rightarrow}

\newcommand{\red}{{\operatorname{red}}}

\newcommand{\Sh}{{\operatorname{Sh}}}

\newcommand{\Sing}{\operatorname{Sing}}

\newcommand{\Spec}{\operatorname{Spec}}

\newcommand{\Supp}{\operatorname{Supp}}
\newcommand{\Sym}{\operatorname{Sym}}

\newcommand{\Torsion}{\operatorname{Torsion}}
\newcommand{\trdeg}{\operatorname{trdeg}}

\newcommand{\Twist}{\operatorname{Twist}}

\newcommand{\uHom}{\underline{\operatorname{Hom}}}


\newcommand{\lieg}{\mathfrak{g}}

\newcommand{\lieu}{\mathfrak{u}}
 \newcommand{\liegl}{\mathfrak{gl}}

\newcommand{\supth}[1]{\ensuremath{#1^{\mathrm{th}}}}


\EpigaVolumeYear{7}{2023} \EpigaArticleNr{23} \ReceivedOn{April 12, 2023}
\InFinalFormOn{September 11, 2023}
\AcceptedOn{September 29, 2023}

\title{Generalizations of quasielliptic curves}
\titlemark{Generalizations of quasielliptic curves}

\author{Cesar Hilario}
\address{Mathematisches Institut, Heinrich-Heine-Universit\"at, 40204 D\"usseldorf, Germany}
\email{Cesar.Hilario@hhu.de}
\author{Stefan Schr\"oer}
\address{Mathematisches Institut, Heinrich-Heine-Universit\"at, 40204 D\"usseldorf, Germany}
\email{schroeer@math.uni-duesseldorf.de}

\authormark{C.~Hilario and S.~Schr\"oer}

\AbstractInEnglish{We generalize the notion of quasielliptic curves,
  which have infinitesimal symmetries and exist only in characteristic
  two and three, to a hierarchy of regular curves having infinitesimal
  symmetries, defined in all characteristics and having higher genera.
  This relies on the study of certain infinitesimal group schemes
  acting on the affine line and certain compactifications.  The group
  schemes are defined in terms of invertible additive polynomials over
  rings with nilpotent elements, and the compactification is
  constructed with the theory of numerical semigroups.  The existence
  of regular twisted forms relies on Brion's recent theory of
  equivariant normalization.  Furthermore, extending results of Serre
  from the realm of group cohomology, we describe non-abelian
  cohomology for semidirect products, to compute in special cases the
  collection of all twisted forms.}

\MSCclass{14G17, 14L15, 14L30, 14H45, 20M25, 20J06, 14D06}

\KeyWords{Group schemes, regular curves, quasielliptic curves, numerical semigroups, twisted forms, non-abelian cohomology}


\acknowledgement{The research was conducted in the framework of the
  research training group \emph{GRK 2240: Algebro-Geometric Methods in
  Algebra, Arithmetic and Topology}.}

\begin{document}



\maketitle

\begin{prelims}

\DisplayAbstractInEnglish

\bigskip

\DisplayKeyWords

\medskip

\DisplayMSCclass







\end{prelims}


\newpage

\setcounter{tocdepth}{1}

\tableofcontents


\section{Introduction}\label{Introduction}

Let $K$ be a ground field of characteristic $p>0$. The goal of this paper is to generalize, in an equivariant way, the \emph{rational cuspidal curve}
\begin{equation}
\label{rational cuspidal curve}
X=\Spec K[T^2,T^3]\cup \Spec K[T^{-1}] 
\end{equation}
from the cases $p=2$ and $p=3$, when the automorphism group scheme is non-reduced, to a hierarchy of integral curves $X_{p,n}$ whose automorphism group schemes are likewise non-reduced.  Here the index $p>0$ indicates the characteristic, and $p^{n(n+1)/2}$ gives the ``size'' of non-reducedness.

Our motivation originates from the \emph{Enriques classification} of algebraic surfaces over ground fields $k=k^\alg$: This vast body of theorems on the structure of surfaces $S$ was extended by Bombieri and Mumford to positive characteristics (see \cite{Bombieri; Mumford 1977, Bombieri; Mumford 1976}).  Their main insight was the introduction and analysis of \emph{quasielliptic fibrations}, which are morphisms $f\colon S\ra B$ whose generic fiber $Y=f^{-1}(\eta)$ is a so-called \emph{quasielliptic curve}, in other words, a twisted form of \eqref{rational cuspidal curve} over the function field $K=k(B)$, with all local rings $\O_{Y,y}$ regular.  One knows that such twisted forms exist only over imperfect fields of characteristic $p\leq 3$, and Queen gave explicit equations for them (see \cite{Queen 1971, Queen 1972}), although of rather extrinsic nature.

We discovered the hierarchy $X=X_{p,n}$ somewhat accidentally, while seeking a deeper and more intrinsic understanding of quasielliptic curves.  The curves do not reveal themselves in any direct way; one has to understand them through their \emph{automorphism group scheme} $\Aut_{X/K}$.  Its crucial part consists of certain infinitesimal group schemes $U_n$ of order $ p^{n(n+1)/2}$ acting in a canonical way on the affine line $\AA^1=\Spec K[T^{-1}]$.  The underlying scheme is $\alpha_{p^n}\times\alpha_{p^{n-1}}\times\ldots\times\alpha_p$, a singleton formed with iterated Frobenius kernels of the additive group, but endowed with a non-commutative group law. Its definition relies on the so-called \emph{additive polynomials} $\sum_{i=0}^n\lambda_iT^{-p^i}$, or equivalently the elements of the \emph{skew polynomial ring} $R[F;\sigma]$, formed over rings with nilpotent elements.

According to Brion's recent theory of \emph{equivariantly normal curves}, see \cite{Brion 2022b}, there is a unique compactification $\AA^1\subset X_{p,n}$ to which the action of $U_n$ extends in an optimal way.  In general, it is very difficult to unravel the structure of such compactifications, but here we were able to ``guess'' an explicit description in terms of the \emph{numerical semigroups}
$$
\Gamma_{p,n}=\left\langle p^n,p^n-p^{n-1},\ldots,p^n-p^0\right\rangle\subset\NN,
$$
monoids that each comprise all but finitely many natural numbers.  The guesswork was assisted by computer algebra computations with Magma and GAP, performed in a handful of special cases.  Our first main result is that the ensuing toric compactification has an intrinsic meaning. 

\begin{maintheorem}[See Theorems~\ref{action extends} and~\ref{equivariantly normal}] The $U_n$-action on the affine line extends to the compactification
$$
X_{p,n}=\Spec K\left[T^{\Gamma_{p,n}}\right]\cup\Spec K\left[T^{-1}\right],
$$
and this projective curve is equivariantly normal with respect to the $U_n$-action.
\end{maintheorem}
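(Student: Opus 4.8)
The plan is to prove the two assertions separately. For the extension, recall that a point $g\in U_n(R)$ acts on $\AA^1_R=\Spec R[T^{-1}]$ by the substitution $T^{-1}\mapsto\phi_g(T^{-1})$, where $\phi_g(x)=x+\lambda_1x^p+\dots+\lambda_nx^{p^n}$ is the corresponding invertible additive polynomial, the coefficient $\lambda_i\in R$ being pulled back from the factor $\alpha_{p^{n-i+1}}$ so that $\lambda_i^{p^{n-i+1}}=0$. Write $\phi_g(T^{-1})=T^{-1}v$ with $v=1+\sum_{i=1}^{n}\lambda_iT^{-(p^i-1)}$. Since $v$ is $1$ plus a nilpotent it is a unit in $R[T^{-1}]$, and the $p^n$-th power annihilates every $\lambda_i$, so $v^{p^n}=1$, whence $\phi_g(T^{-1})^{-p^n}=T^{p^n}$ on the nose. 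Consequently, for each generator $p^n-p^j$ $(0\le j\le n-1)$ of $\Gamma_{p,n}$,
\[
\phi_g(T^{-1})^{-(p^n-p^j)}=T^{p^n}(T^{-1}v)^{p^j}=T^{\,p^n-p^j}\,v^{p^j}=T^{\,p^n-p^j}+\sum_{i=1}^{n-j}\lambda_i^{p^j}\,T^{\,p^n-p^{i+j}},
\]
the truncation reflecting that $\lambda_i^{p^j}=0$ for $i>n-j$; all exponents here have the form $p^n-p^k$ with $j\le k\le n$ and so lie in $\Gamma_{p,n}$. As $\phi_g(T^{-1})^{-(a+b)}=\phi_g(T^{-1})^{-a}\phi_g(T^{-1})^{-b}$ and $\Gamma_{p,n}$ is closed under addition, it follows that $\phi_g(T^{-1})^{-a}\in R[T^{\Gamma_{p,n}}]$ for all $a\in\Gamma_{p,n}$, so $T^a\mapsto\phi_g(T^{-1})^{-a}$ is an endomorphism of $R[T^{\Gamma_{p,n}}]$ matching the one on $R[T^{-1}]$ over the common open $\Spec R[T^{\pm1}]$. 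Gluing yields a morphism $U_n\times X_{p,n}\to X_{p,n}$ extending the action on $\AA^1$ (whose only fixed point $T^{-1}=0$ lies on $\AA^1$). The group-action axioms then follow formally, because they hold over $U_n\times\AA^1$, which is schematically dense in $U_n\times X_{p,n}$ — $\AA^1$ is a dense open of the integral scheme $X_{p,n}$, and schematic density survives the flat base changes $U_n\times(-)$ and $U_n\times U_n\times(-)$ — and $X_{p,n}$ is separated, so morphisms agreeing on a schematically dense subscheme coincide.

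For equivariant normality I would use that, for infinitesimal $G$, a proper $G$-curve $X$ is equivariantly normal exactly when the quotient $X/G$ is normal (cf.\ \cite{Brion 2022b}), and show $X_{p,n}/U_n\cong\PP^1_K$. The first computation shows $T^{p^n}$ is $U_n$-invariant; I claim $K(T)^{U_n}=K(T^{p^n})$. Indeed $K(T^{p^n})\subseteq K(T)^{U_n}\subsetneq K(T)$, the last inclusion because the action is nontrivial ($\lambda_1\neq0$ in $\O(U_n)$); the intermediate fields of the simple purely inseparable extension $K(T)/K(T^{p^n})$ are precisely the $K(T^{p^i})$, $0\le i\le n$ (an intermediate $M$ is rational by Lüroth, and $T^{\,[K(T):M]}\in M$ pins it down); but $T^{p^{n-1}}$ is \emph{not} invariant, since $\phi_g(T^{-1})^{-p^{n-1}}=T^{p^{n-1}}v^{-p^{n-1}}$ with $v^{p^{n-1}}\neq1$, as $\lambda_1^{p^{n-1}}\neq0$ — the coordinate on $\alpha_{p^n}$ having nilpotency order exactly $p^n$. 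This excludes $M=K(T^{p^i})$ for $i\le n-1$ and leaves $K(T)^{U_n}=K(T^{p^n})$. Since $K[T^{\Gamma_{p,n}}]\subseteq K[T]$, $K[T]\cap K(T^{p^n})=K[T^{p^n}]$, and $p^n\in\Gamma_{p,n}$, we obtain $K[T^{\Gamma_{p,n}}]^{U_n}=K[T^{p^n}]$ and likewise $K[T^{-1}]^{U_n}=K[T^{-p^n}]$; both are polynomial rings, so $X_{p,n}/U_n\cong\Spec K[T^{p^n}]\cup\Spec K[T^{-p^n}]\cong\PP^1_K$ is normal, and $X_{p,n}$ is equivariantly normal.

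None of the calculations above is long; I expect the real difficulty to lie in the structural input from the earlier sections that this argument invokes — the construction of $U_n$ from the skew polynomial ring $R[F;\sigma]$, its canonical action on $\AA^1$ through additive polynomials, and the precise matching of each coefficient $\lambda_i$ with the nilpotency order $p^{n-i+1}$. The single most delicate fact is that the $\alpha_{p^n}$-coordinate has nilpotency order \emph{exactly} $p^n$: this is what makes $T^{p^{n-1}}$ fail to be invariant, hence what forces the quotient to be $\PP^1$ rather than a smaller curve, and therefore what singles out $\Gamma_{p,n}$ — as opposed to some proper sub-semigroup of it — as the correct compactifying semigroup. A secondary point needing care is to check that the cuspidal curve $X_{p,n}$ satisfies the hypotheses of the equivariant-normality criterion of \cite{Brion 2022b}.
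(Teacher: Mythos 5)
Your first half --- the extension of the $U_n$-action --- is correct and is essentially the paper's own argument: writing $\phi_g(T^{-1})=T^{-1}v$ with $v=1+\sum\lambda_iT^{1-p^i}$, observing $v^{p^n}=1$, computing $v^{p^j}$ on the generators $p^n-p^j$, and reducing the action axioms to schematic density of $\AA^1$ is exactly the content of Proposition~\ref{criteria for extensions}\eqref{ce-2} and the proof of Theorem~\ref{action extends}.

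The second half, however, rests on a false criterion. It is not true that for an infinitesimal $G$ a proper $G$-curve is equivariantly normal as soon as $X/G$ is normal, and no such statement appears in \cite{Brion 2022b}. Concrete counterexample: in characteristic $p\geq 5$, let $\alpha_p\subset\GG_a$ act on the rational cuspidal curve $X=\Spec K[T^2,T^3]\cup\Spec K[T^{-1}]$ by translation in $x=T^{-1}$. The normalization $\PP^1\to X$ is $\alpha_p$-equivariant (the $\GG_a$-action lifts to the normalization by \cite[Proposition~2.5.1]{Brion 2017}) and is not an isomorphism, so $X$ is \emph{not} $\alpha_p$-normal --- consistently with the nonexistence of quasielliptic curves in characteristic $\geq 5$. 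Yet the invariant rings are $K[T^2,T^3]^{\alpha_p}=K[T^p]$ and $K[T^{-1}]^{\alpha_p}=K[T^{-p}]$, so $X/\alpha_p\cong\PP^1$ is normal. The reason your argument cannot work is that the quotient is blind to the scheme structure of the orbit of the singular point: in the counterexample that orbit is the reduced point (the whole group is inertia), which is not Cartier, while for $X_{p,n}$ under $U_n$ the orbit is the fat point $U_n/U_{n-1}$ of degree $p^n$ cut out by the single equation $T^{p^n}=0$. Establishing the latter is the actual crux: one must compute the inertia group scheme (Proposition~\ref{inertia}), identify the orbit as $U_n/U_{n-1}$ of degree $p^n$, and show its ideal in $K[T^{\Gamma_{p,n}}]$ is principal, which uses the relations $p(p^n-p^j)=(p-1)p^n+(p^n-p^{j+1})$ in $\Gamma_{p,n}$ to bound $\dim_K K[T^{\Gamma_{p,n}}]/(T^{p^n})$ by $p^n$ (Proposition~\ref{cartier divisor}); equivariant normality then follows from Brion's characterization via Cartier orbits \cite[Theorem~4.13]{Brion 2022b}. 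Your computation of $K(T)^{U_n}=K(T^{p^n})$ is correct but does not substitute for this step. (A minor further quibble: for infinitesimal $G$ every point is topologically fixed, so the aside about ``the only fixed point'' lying on $\AA^1$ is not meaningful, though your gluing argument does not actually need it.)
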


For $3\leq p^n\leq 4$, this is precisely the rational cuspidal curve.  The second main result unravels the numerical invariants and infinitesimal symmetries of this hierarchy of projective curves. 

\begin{maintheorem}[See Section~\ref{Complete intersection} and Theorem~\ref{automorphism group scheme}]
The curves $X=X_{p,n}$ have
$$
h^1(\O_X)= \tfrac{1}{2}(np^{n+1}-(n+2)p^n+2)\quadand \Aut_{X/K}=\GG_a\rtimes U_n\rtimes \GG_m.
$$
\end{maintheorem}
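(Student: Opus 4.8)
The plan is to prove the two assertions separately. For $h^{1}(\O_{X})$: the curve $X=X_{p,n}$ is projective and geometrically integral, and its normalisation $\nu\colon\PP^{1}\to X$ is an isomorphism away from the single cusp $x_{0}\in\Spec K[T^{\Gamma_{p,n}}]$, with $\widehat{\O}_{X,x_{0}}=K[[T^{\Gamma_{p,n}}]]\subset K[[T]]$. From the exact sequence $0\to\O_{X}\to\nu_{*}\O_{\PP^{1}}\to\shF\to0$, with $\shF$ concentrated at $x_{0}$ of length $\#(\NN\smallsetminus\Gamma_{p,n})=\dim_{K}K[[T]]/K[[T^{\Gamma_{p,n}}]]$, together with $h^{0}(\O_{X})=1$ and $h^{1}(\O_{\PP^{1}})=0$, one gets $h^{1}(\O_{X})=\#(\NN\smallsetminus\Gamma_{p,n})$, so it remains to count the gaps of $\Gamma_{p,n}$. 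I would first show that $\Gamma_{p,n}$ is a \emph{free} numerical semigroup: listing the generators in increasing order as $c_{1}=p^{n}-p^{n-1}<\dots<c_{n}=p^{n}-1<c_{n+1}=p^{n}$, a short computation gives $d_{j}:=\gcd(c_{1},\dots,c_{j})=p^{\,n-j}(p-1)$ for $1\le j\le n$ and $d_{n+1}=1$, so the multiplicities $e_{j}:=d_{j-1}/d_{j}$ are $p$ for $2\le j\le n$ and $p-1$ for $j=n+1$; the identities $e_{j}c_{j}=c_{j-1}+p\,c_{1}$ for $2\le j\le n$ and $e_{n+1}c_{n+1}=p\,c_{1}$ then verify freeness, and equivalently exhibit $K[T^{\Gamma_{p,n}}]$ as a complete intersection --- namely $K[y_{1},\dots,y_{n+1}]$ modulo the relations $y_{j}^{\,e_{j}}=y_{j-1}y_{1}^{\,p}$ ($2\le j\le n$) and $y_{n+1}^{\,p-1}=y_{1}^{\,p}$, with $y_{i}\leftrightarrow T^{c_{i}}$ --- so that, homogenising with the chart $\Spec K[T^{-1}]$, the curve $X_{p,n}$ is a complete intersection in a weighted projective space; this is the content of Section~\ref{Complete intersection}. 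Since free numerical semigroups are symmetric, $\#(\NN\smallsetminus\Gamma)=\tfrac12(F(\Gamma)+1)$, and for a free semigroup $F(\Gamma)=-c_{1}+\sum_{j=2}^{n+1}(e_{j}-1)c_{j}$; substituting the values above and simplifying gives $F(\Gamma_{p,n})=np^{n+1}-(n+2)p^{n}+1$, whence $h^{1}(\O_{X})=\tfrac12(np^{n+1}-(n+2)p^{n}+2)$.

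For $\Aut_{X/K}$, which is representable by a group scheme of finite type, one uses that it preserves the singular locus $\{x_{0}\}$ and hence acts on $U:=X\smallsetminus\{x_{0}\}=\Spec K[S]$ with $S=T^{-1}$; as $U_{R}\subset X_{R}$ is schematically dense for every $K$-algebra $R$, restriction is a monomorphism identifying $\Aut_{X/K}$ with the subfunctor of $\underline{\Aut}_{\AA^{1}/K}$ of those automorphisms $S\mapsto P(S)$ that extend over $x_{0}$, i.e.\ for which $\phi^{\#}(T^{\gamma})=P(T^{-1})^{-\gamma}$ lies in $\O_{X,x_{0}}\otimes R$ locally around $x_{0}$ for each generator $\gamma$ of $\Gamma_{p,n}$. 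The inclusion $\GG_{a}\rtimes U_{n}\rtimes\GG_{m}\subseteq\Aut_{X/K}$ then comes from: the homothety $T\mapsto\lambda^{-1}T$ for $\GG_{m}$; the $U_{n}$-action, which extends to $X_{p,n}$ by Theorem~\ref{action extends}; and the translation $S\mapsto S+b$ for $\GG_{a}$, which extends because $\phi^{\#}(T^{\gamma})=T^{\gamma}(1+bT)^{-\gamma}=\sum_{k\ge0}\binom{-\gamma}{k}b^{k}T^{\gamma+k}$ and $p\mid\binom{-\gamma}{k}=(-1)^{k}\binom{\gamma+k-1}{k}$ whenever $\gamma$ is a generator of $\Gamma_{p,n}$ and $\gamma+k\notin\Gamma_{p,n}$ --- a congruence obtained from Kummer's theorem and the base-$p$ description of the gaps. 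The semidirect-product structure already holds inside $\underline{\Aut}_{\AA^{1}/K}$: the identity $\psi\circ\tau_{b}\circ\psi^{-1}=\tau_{\psi(b)}$, valid because $\psi$ is additive in characteristic $p$, shows $U_{n}$ normalises $\GG_{a}$, and $\GG_{m}$ normalises $U_{n}$ and $\GG_{a}$ by rescaling parameters.

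The reverse inclusion is the heart of the matter. Over any field extension $L/K$ every automorphism of $\AA^{1}_{L}$ is affine and, by the congruence above, extends over $x_{0}$, so $\Aut_{X/K}(L)=\Aff_{1}(L)=(\GG_{a}\rtimes\GG_{m})(L)$; thus $\Aut_{X/K}$ is an infinitesimal thickening of $\GG_{a}\rtimes\GG_{m}$, and only its infinitesimal part remains to be determined. A first check computes $\Lie(\Aut_{X/K})=H^{0}(X,\mathcal{T}_{X})=\Der_{K}(\O_{X},\O_{X})$: a global derivation is a vector field $g(S)\partial_{S}$ on $U$ which near $x_{0}$ preserves $K[[T^{\Gamma_{p,n}}]]$, i.e.\ of the form $f(T)\partial_{T}$ with $f$ a Laurent polynomial of top degree $\le2$ such that $\gamma f(T)T^{\gamma-1}$ is supported in $\Gamma_{p,n}$ for all generators $\gamma$, and one finds $\Lie(\Aut_{X/K})=\langle\,T\partial_{T},\,T^{2}\partial_{T},\,T^{2-p}\partial_{T},\dots,T^{2-p^{n}}\partial_{T}\,\rangle$, of dimension $n+2=\dim\Lie(\GG_{a}\rtimes U_{n}\rtimes\GG_{m})$, the last $n$ fields being the infinitesimal generators of $U_{n}$. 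For the full statement I would argue that an arbitrary $\phi\in\Aut_{X/K}(R)$, written $S\mapsto P(S)$, fixes $x_{0}$ with its scheme structure, hence acts on $\idealm_{x_{0}}/\idealm_{x_{0}}^{2}$ and on $\O_{X,x_{0}}\otimes R$, and that compatibility with the complete-intersection relations $y_{j}^{\,e_{j}}=y_{j-1}y_{1}^{\,p}$, $y_{n+1}^{\,p-1}=y_{1}^{\,p}$ --- together with $\phi$ being a polynomial automorphism of $U$ --- forces $P(S)=\mu S+\mu^{p}\lambda_{1}S^{p}+\dots+\mu^{p^{n}}\lambda_{n}S^{p^{n}}+b$ with $\mu\in R^{\times}$, $b\in R$ and $\lambda_{i}$ nilpotent of order dividing $p^{\,n+1-i}$; these nilpotency bounds are exactly what kills the images of the relations --- as one already sees for $n=1$, where the action is $T^{p-1}\mapsto T^{p-1}+\lambda$, $T^{p}\mapsto T^{p}$, and preservation of $(T^{p})^{p-1}=(T^{p-1})^{p}$ forces $\lambda^{p}=0$. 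The essential obstacle is precisely this last step --- excluding every spurious infinitesimal automorphism and reading off the exact nilpotency structure of $U_{n}$, where the $p$-adic combinatorics of $\Gamma_{p,n}$ packaged into those complete-intersection equations is indispensable --- whereas the easy ingredients ($\GG_{m}$, the congruence for $\GG_{a}$, and the formal identities above) give only the inclusion ``$\supseteq$''.
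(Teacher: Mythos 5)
Your computation of $h^1(\O_X)$ is correct and in substance the same as the paper's. You check directly that $\Gamma_{p,n}$ is free for the increasing arrangement of its generators (the identities $e_jc_j=c_{j-1}+p\,c_1$ and $e_{n+1}c_{n+1}=p\,c_1$ do hold, and your gcd chain $d_j=p^{\,n-j}(p-1)$ is right), then use symmetry and the Frobenius-number formula for free semigroups; the paper instead runs an induction via Delorme's gluing result. These are two phrasings of the same mechanism, and your complete-intersection presentation matches Proposition~\ref{homogeneous ideal} after reindexing. Likewise, the inclusion $\GG_a\rtimes U_n\rtimes\GG_m\subseteq\Aut_{X/K}$ and the reduction of the converse to the infinitesimal part (geometric points give only $(\GG_a\rtimes\GG_m)(L)$, and $h^0(\Theta_{X/K})=n+2$) track Theorem~\ref{action extends}, Lemma~\ref{inclusion bijective}, and Proposition~\ref{linear system}; your power-series treatment of the $\GG_a$-extension glosses over the descent through the conductor square that the paper handles via Laurent's lemma, but the underlying support computation is the same.

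The genuine gap is the reverse inclusion for $\Aut_{X/K}$, which you assert rather than prove and yourself flag as ``the essential obstacle.'' An arbitrary $R$-point restricts on $\AA^1_R$ to $S\mapsto b+\mu S+\sum_{i\ge 2}a_iS^i$ with $\mu$ a unit and the $a_i$ nilpotent of \emph{a priori} unknown order; one must both kill every $a_i$ with $i$ not a power of $p$ and pin down the exact exponents $\lambda_i^{p^{n-i+1}}=0$. Saying that ``compatibility with the complete-intersection relations forces'' this is naming the conclusion, not deriving it: unwinding $T^\gamma\mapsto \mu^{-\gamma}T^\gamma(1+\cdots)^{-\gamma}$ for a general nilpotent perturbation produces cross-terms whose control \emph{is} the theorem, and your $n=1$ sample (where $\lambda^p=0$ drops out of one relation) gives no indication of how to organize this for $n\ge 2$. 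The paper avoids the head-on computation: it embeds $X\subset\PP^{n+1}$ by the very ample tangent sheaf, identifies $G\cdot\GG_m$ with the stabilizer of the span of the defining equations (Proposition~\ref{stabilizer group scheme}), settles $n=1$ by an explicit matrix argument in $\GL_3$, and then inducts on $n$ using $X_{p,n-1}=\Bl_{x_0}(X_{p,n})$, equivariance of the blow-up under the inertia group scheme $I$ (which forces $I=\GG_a\rtimes U_{n-1}\rtimes\GG_m$), and the degree bound $h^0(\O_{G/I})\le p^n=h^0(\O_{U_n/U_{n-1}})$ for the orbit of the singular point inside $\Sing(X/K)$. Either that structural route or a complete execution of your direct elimination is needed; as written the proposal establishes only the inclusion ``$\supseteq$.''
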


In this \emph{iterated semidirect product}, the additive group $\GG_a$ is normalized by the infinitesimal group scheme $U_n$, and both are normalized by the multiplicative group $\GG_m$.  Note that for $3\leq p^n\leq 4$, this precisely gives back the computation of Bombieri and Mumford \cite[Proposition~6]{Bombieri; Mumford 1976}, and the above should be seen as a natural generalization.

The computation of the genus relies on general results of Delorme \cite{Delorme 1976} on numerical semigroups, applied to our $\Gamma_{p,n}$. The determination of the automorphism group is based on further surprising properties of the projective curves $X=X_{p,n}$: The tangent sheaf $\Theta_{X/K}=\uHom(\Omega^1_{X/K},\O_X)$ turns out to be invertible, actually very ample, giving a canonical inclusion $X\subset\PP(\lieg)= \PP^{n+1}$, where $\lieg=H^0(X,\Theta_{X/K})$ is the Lie algebra of the automorphism group scheme $G=\Aut_{X/K}$.  From the canonical linearization $\O_X(1)=\Theta_{X/K}$, we get a matrix representation for $G$, which is crucial to gain control over its structure.  Furthermore, $X$ is \emph{globally a complete intersection}, defined inside $\PP^{n+1}$ by the following $n$ homogeneous equations of degree $p$:
$$
U_{n-1}^p-V^{p-1}Z=0\quadand U_j^p-V^{p-1}U_{j+1}=0\quad (0\leq j\leq n-2).
$$
Also note that the curves are related by a hierarchy of blowing-ups $X_{p,n-1}=\Bl_Z(X_{p,n})$, where the center is the singular point (\textit{cf.} Lemma~\ref{blowing-up}).

Again building on Brion's theory of equivariantly normal curves, see \cite{Brion 2022b}, we show that our $X=X_{p,n}$ have, over ground fields $K$ with ``enough'' imperfection, twisted forms $Y$ where all local rings $\O_{Y,y}$ are regular (\textit{cf.} Theorem~\ref{equivariantly normal}). These have the same structural properties of $X$, except that the singularities get ``twisted away.''  In turn, the passage from the rational cuspidal curve to quasielliptic curves is generalized to our hierarchy $X=X_{p,n}$.

The above relies on rather general observations, which form the  third main result of this paper. 

\begin{maintheorem}[See Section~\ref{Equivariant normality}]
Let $X$ be a geometrically integral curve with the action of a finite group scheme $G$.  Suppose that\, $\Sing(X/K)_\red$ is \'etale.  Then $X$ is equivariantly normal if and only if for some field extension $K\subset L$, the base-change $X\otimes L$ admits a twisted form that is regular.
\end{maintheorem}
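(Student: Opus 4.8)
The plan is to prove the two implications separately, using Brion's equivariant normalization as the common tool. Suppose first that $X$ is equivariantly normal. By definition this means that the normalization morphism $\pi\colon \tilde X\to X$ is $G$-equivariant and, more to the point, that $X$ itself is the \emph{largest} $G$-stable quotient of $\tilde X$ on which $G$ still acts (equivalently, $X = \tilde X/\!\!\sim$ for the $G$-equivariant gluing data). The idea is to produce the regular twisted form by a descent-theoretic argument: pass to a field extension $L/K$ that is ``large enough'' — I would take $L$ to be a sufficiently large purely transcendental (or more generally separable) extension, chosen so that the finitely many singular points of $X\otimes L$, which lie over the $\bar K$-points of $\Sing(X/K)_\red$, become ``movable'' by twisting. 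Since $\Sing(X/K)_\red$ is étale, after enlarging $K$ these points are all $L$-rational and $G$ acts on the formal/henselian neighbourhood of each. One then builds a $1$-cocycle valued in $\Aut$ of the equivariant curve (or rather in the automorphisms of the local model at each singularity) whose twist separates the branches at each singular point; equivariant normality is exactly the condition guaranteeing that the twisted object is still a curve with a $G$-action and that this twist exists — this is the content of Brion's theory, so I would cite \cite{Brion 2022b} for the key statement that equivariantly normal curves have regular equivariant twisted forms over suitable extensions, and spell out how the hypothesis on $\Sing(X/K)_\red$ being étale lets us handle each singular point independently.

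For the converse, suppose that for some extension $K\subset L$ the base-change $X\otimes L$ admits a twisted form $Y$ that is regular. Equivariant normality can be checked after base change, because it is a statement about the (equivariant) normalization morphism, and normalization commutes with separable, hence with arbitrary field, base change up to the usual care — more precisely, one uses that $X$ is equivariantly normal iff $X\otimes L'$ is, for any field extension $L'/K$, which follows from faithfully flat descent applied to the morphism $\pi$ together with the $G$-action. So it suffices to show $X\otimes L$ is equivariantly normal. But $X\otimes L$ is a twisted form of the regular curve $Y$, and a regular curve is certainly equivariantly normal (its normalization is an isomorphism, trivially $G$-equivariant, and it is maximal among $G$-stable modifications). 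Equivariant normality is invariant under twisting: a twisted form shares the same geometric normalization and the same equivariant gluing data after base change to a common splitting field, so $X\otimes L$ and $Y$ become isomorphic $G$-equivariantly over $\bar K$, and the property descends. Hence $X\otimes L$, and therefore $X$, is equivariantly normal.

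The main obstacle, and the step that requires the most care, is the forward direction: producing the explicit twisting cocycle that regularizes the singularities while preserving the $G$-action. The difficulty is that twisting by an automorphism of $X$ itself does not change $X$; the regularization must come from a cocycle valued in a strictly larger automorphism group — morally, the automorphisms of the \emph{completion} of the equivariant normalization datum, which over imperfect $L$ can fail to be algebraic over $L$. This is precisely where ``enough imperfection'' of $K$ enters, and where Brion's machinery does the real work: the equivariant normality of $X$ guarantees that the local branch-separating deformations at each (étale) singular point are unobstructed and patch to a global twist. I would therefore isolate this as a lemma, reduce via the étale hypothesis to a single complete local ring with $G$-action, and invoke \cite{Brion 2022b} for the existence of the branch-separating equivariant form; the remaining steps (base change compatibility, descent, the triviality of the regular case) are comparatively routine.
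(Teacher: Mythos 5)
Both directions of your argument have genuine gaps, and in both cases the missing ingredient is the same: Brion's Cartier criterion, namely that a one-dimensional $G$-curve is equivariantly normal if and only if every finite $G$-stable closed subscheme is an effective Cartier divisor (\cite{Brion 2022b}, Theorem~4.13). For the forward direction, your plan is essentially to cite Brion for ``the key statement that equivariantly normal curves have regular equivariant twisted forms over suitable extensions'' --- but that statement is precisely what has to be proved here (it is the content of Theorem~\ref{sufficient regular twisting} together with Proposition~\ref{conditions after base-change}); it is not in \cite{Brion 2022b}. The actual mechanism is: produce, over a suitable \emph{separable} extension $L$ (the function field of $U/G$ for a quasi-projective $G$-scheme $U$ with generically free action --- not a transcendental extension chosen to make points ``movable''), a $G$-torsor $P$ whose \emph{total space is reduced}; then for each singular point the orbit $Z=G/H$ is an effective Cartier divisor by equivariant normality, its twist ${}^P\!Z\subset{}^P\!X$ is again Cartier and is identified with $H\backslash P$, hence reduced, and a reduced effective Cartier divisor through a point of an integral curve forces the local ring there to be regular. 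None of this appears in your sketch; moreover the picture of a ``branch-separating'' cocycle is off the mark (the relevant singularities are unibranch cusps, and regularity is achieved by making the residue field at the point inseparable over $K$, not by separating branches), and the remark that ``twisting by an automorphism of $X$ itself does not change $X$'' is mistaken: the twist is by a torsor under $G\subset\Aut_{X/K}$ and genuinely changes $X$.

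For the converse, the step that fails is the descent of equivariant normality from a common splitting field. A torsor under a finite, typically infinitesimal, group scheme is split only after a purely \emph{inseparable} extension $L\subset E$, and neither regularity nor (equivariant) normality ascends along such extensions --- indeed $Y\otimes E\simeq X\otimes E$ is singular again, so your chain ``$Y$ regular $\Rightarrow$ $Y$ equivariantly normal $\Rightarrow$ $Y\otimes E$ equivariantly normal $\Rightarrow$ $X_L$ equivariantly normal'' breaks at the second arrow. (Your blanket claim that equivariant normality is insensitive to arbitrary field extensions is likewise false; only descent, and ascent along \emph{separable} extensions, hold.) The correct argument, as in Theorem~\ref{sufficient equivariant normal}, again goes through the Cartier criterion: given a finite $G$-stable $Z\subset X$, its twist ${}^P\!Z$ is a finite closed subscheme of the regular curve $Y$, hence Cartier; over a splitting field $E$ one has $Z\otimes E\simeq {}^P\!Z\otimes E$, so $Z\otimes E$ is Cartier; and \emph{being Cartier}, unlike regularity, does descend along arbitrary field extensions. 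Hence $Z$ is Cartier and $X$ is $G$-normal.
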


Quasielliptic fibrations play a crucial role in the arithmetic of algebraic surfaces of special type, in particular for K3 surfaces and Enriques surfaces (for an example, see \cite{Schroeer 2023a}). We expect that twists over function fields of our $X=X_{p,n}$ play a similar role for surfaces of general type.

By the general theory of non-abelian cohomology and twisted forms, one may view the collection $\operatorname{Twist}(X) $ of isomorphism classes of twisted forms over $S=\Spec (K)$ as non-abelian cohomology $H^1(S,\Aut_{X/S})$.  For our curves $X=X_{p,n}$, we determined the automorphism group scheme.  This raises the question of how to compute non-abelian cohomology for semidirect products in general.  We  establish effective techniques to do so and are able apply them at least in the case $n\leq 2$.  Our fourth main result is as follows. 

\begin{maintheorem}[See Theorem~\ref{cohomology set}] For $G=\GG_a\rtimes U_2\rtimes \GG_m$, the non-abelian cohomology is
$$
H^1(S,G)= \bigcup K/\left\{u^{p^2}- v - \alpha v^p - \beta^p v^{p^2}\mid u,v\in K\right\},
$$
where the union runs over $(\alpha,\beta)\in \bigcup_{K/K^{p^2}} K/K^p$, with $\alpha\in K/K^{p^2}$ and $\beta\in K/K^p$.
\end{maintheorem}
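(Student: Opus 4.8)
The plan is to compute $H^1(S,G)$ by d\'evissage along the iterated semidirect product $G=\GG_a\rtimes U_2\rtimes\GG_m$, feeding each layer into our exact sequence for the non-abelian cohomology of semidirect products. The two ``large'' layers cost almost nothing: since $H^1(\Spec K,\GG_a)=0$ and $H^1(\Spec K,\GG_m)=0$ (the latter by Hilbert~90), the exact sequence attached to $1\to\GG_a\to\GG_a\rtimes U_2\to U_2\to 1$ shows that $H^1(S,\GG_a\rtimes U_2)$ is the disjoint union, over $\eta\in H^1(S,U_2)$, of the fibres $H^1(S,{}_\eta\GG_a)$. Here ${}_\eta\GG_a$ is the twist of $\GG_a$ by $\eta$ along the $U_2$-action $\phi\cdot a=\phi(a)=a+\lambda_1a^p+\lambda_2a^{p^2}$ coming from the additive polynomial $\phi=T^{-1}+\lambda_1T^{-p}+\lambda_2T^{-p^2}$, and the quotient in the fibre description is by $({}_\eta U_2)(K)=\{1\}$, since $U_2$, and hence each of its twists, is infinitesimal. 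Likewise $1\to\GG_a\rtimes U_2\to G\to\GG_m\to 1$ gives $H^1(S,G)=H^1(S,\GG_a\rtimes U_2)/\GG_m(K)$; I will check below that this last quotient is trivial, so that $H^1(S,G)=H^1(S,\GG_a\rtimes U_2)$.

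Next I would compute $H^1(S,U_2)$. Composing invertible additive polynomials exhibits $U_2$ as a central extension $1\to\alpha_p\to U_2\to\alpha_{p^2}\to 1$: the normal $\alpha_p$ is $\{\lambda_1=0\}$, recorded by the coefficient $\lambda_2$ of $T^{-p^2}$, and the quotient $\alpha_{p^2}$ is recorded by the coefficient $\lambda_1$ of $T^{-p}$, with non-commutativity cocycle $\lambda_1(\lambda_1')^p$. From $0\to\alpha_{p^m}\to\GG_a\xrightarrow{F^m}\GG_a\to 0$ one gets $H^1(\Spec K,\alpha_{p^m})=K/K^{p^m}$ and $H^2(\Spec K,\alpha_{p^m})=0$, so the pointed-set sequence attached to the central extension is a surjection $H^1(S,U_2)\twoheadrightarrow K/K^{p^2}$ each fibre of which is a principal homogeneous space under $K/K^p$. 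Thus $H^1(S,U_2)$ is identified with the set of pairs $(\alpha,\beta)$, $\alpha\in K/K^{p^2}$ coming from $\lambda_1$ and $\beta\in K/K^p$ from $\lambda_2$ — exactly the index set $\bigcup_{K/K^{p^2}}K/K^p$ of the statement. Moreover $\GG_m$ scales $\lambda_1,\lambda_2$ by characters, hence acts on $\alpha$ by multiplication by $p^2$-th powers in $K^\times$ and on $\beta$ by $p$-th powers; both are trivial on $K/K^{p^2}$ and $K/K^p$, so $\GG_m(K)$ acts trivially on $H^1(S,U_2)$.

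The heart of the argument is to identify ${}_\eta\GG_a$ for $\eta\leftrightarrow(\alpha,\beta)$ and compute its cohomology. Since the $U_2$-action on $\GG_a$ factors through the ring of additive polynomials, I would perform the descent of $\GG_a$ along the $U_2$-torsor attached to $(\alpha,\beta)$ — a non-commutative deformation of $\{\lambda_1^{p^2}=\alpha,\;\lambda_2^p=\beta\}$ — and, using the commutation rule $F\mu=\mu^pF$, realise ${}_\eta\GG_a$ as the kernel of an explicit additive morphism of vector groups:
$$0\longrightarrow{}_\eta\GG_a\longrightarrow\GG_a^2\xrightarrow{\;(u,v)\,\longmapsto\,u^{p^2}-v-\alpha v^p-\beta^p v^{p^2}\;}\GG_a\longrightarrow 0.$$
The right-hand arrow is an fppf epimorphism (for any target value, take $v=0$ and extract a $p^2$-th root), so $H^1(\Spec K,\GG_a)=0$ makes the long exact sequence collapse to
$$H^1(S,{}_\eta\GG_a)=\Cokernel\bigl(K^2\to K\bigr)=K\big/\bigl\{\,u^{p^2}-v-\alpha v^p-\beta^p v^{p^2}\;\bigm|\;u,v\in K\,\bigr\}.$$
Finally, this presentation is $\GG_m$-equivariant only if the target $\GG_a$ carries weight zero — matching the fixed coefficient $1$ of $v$ against the fixed coefficient $\alpha$ of $v^p$ forces the target weight $w$ to satisfy $w=pw$, hence $w=0$ — so $\GG_m(K)$ acts trivially on each fibre $H^1(S,{}_\eta\GG_a)$; with the previous paragraph this makes the $\GG_m$-quotient trivial. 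Assembling the three steps gives exactly $H^1(S,G)=\bigcup_{(\alpha,\beta)}K/\{u^{p^2}-v-\alpha v^p-\beta^p v^{p^2}\mid u,v\in K\}$.

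The main obstacle is the exact sequence in the third step. One really has to carry out the descent of $\GG_a$ along a $U_2$-cocycle — valued in invertible additive polynomials over $K$-algebras with nilpotent elements — and verify that it produces precisely this vector-group extension, with these exponents and with $\alpha$ and $\beta^p$ in these exact slots; this is skew-polynomial bookkeeping that must be done by hand, keeping careful track of the non-commutativity of $U_2$. A lighter point is to confirm that $H^1(S,U_2)\to K/K^{p^2}$ splits at the level of sets compatibly with the coordinates $(\alpha,\beta)$, so that the final answer is literally the stated disjoint union.
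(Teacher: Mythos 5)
Your d\'evissage is essentially the paper's own route: strip off $\GG_m$ via Hilbert~90, decompose $H^1(S,\GG_a\rtimes U_2)$ over $H^1(S,U_2)$ using that twists of the infinitesimal $U_2$ have trivial $H^0$, identify $H^1(S,U_2)$ with $\bigcup_{K/K^{p^2}}K/K^p$, and then compute $H^1$ of each twist ${}^P\GG_a$ from a Russell-type presentation as a kernel inside $\GG_a^{\oplus 2}$. All of that formal scaffolding is correct (the extra ``weight'' argument for triviality of the $\GG_m(K)$-action is unnecessary: the extension $\GG_a\rtimes U_2\rtimes\GG_m\to\GG_m$ is split, so the action on the fibres is trivial by the general decomposition theorem).

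However, there is a genuine gap exactly where you yourself locate the ``main obstacle'': the identification of ${}^P\GG_a$ with the kernel of $(u,v)\mapsto u^{p^2}-v-\alpha v^p-\beta^p v^{p^2}$ is asserted, not derived, and without it the theorem is not proved --- the precise exponents and the placement of $\alpha$ versus $\beta^p$ are the entire content of the statement. To close it you need two things. First, an explicit model of the $U_2$-torsor lying over $(\alpha,\beta)$, which is genuinely non-commutative: the paper takes $P(R)=\{(y,z)\mid y^{p^2}=\alpha,\ z^p=\beta+\alpha y^p\}$ with action $(\lambda_1,\lambda_2)\cdot(y,z)=(\lambda_1+y,\ \lambda_2+z+\lambda_1y^p)$; the correction term $\lambda_1y^p$ encodes the cocycle $\lambda_1\mu_1^p$ you identified, and it is what ultimately produces $\beta^p$ rather than $\beta$. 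Second, the descent computation itself: in $B=K[x,y,z]/(y^{p^2}-\alpha,\,z^p-\beta-\alpha y^p)$ one must exhibit generators of the invariant subring, namely $v=x^{p^2}$ and $u=x-x^py-x^{p^2}z+x^{p^2}y^{p+1}$, check the relation $u^{p^2}=v-\alpha v^p-\beta^p v^{p^2}$, show by a degree count on function fields that these two invariants generate the whole ring of invariants, and finally invoke the fact that two twisted forms of $\GG_a$ which are isomorphic as schemes are isomorphic as group schemes to conclude ${}^P\GG_a=U_{\Phi,\Psi}$. None of these steps is routine bookkeeping that can be waved at; in particular guessing the invariant $u$ (by successively adding monomials to cancel non-invariance) is the one creative computation in the whole proof. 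A secondary, lighter gap is the one you also flag: pinning down the set-theoretic splitting of $H^1(S,U_2)\to K/K^{p^2}$ compatibly with the coordinates $(\alpha,\beta)$ requires the same explicit torsors, so the two missing pieces are really one and the same construction.
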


Perhaps this is the first explicit determination of $\Twist(X)$ via a purely non-abelian cohomological computation of $H^1(S,\Aut_{X/S})$ for some relevant hierarchy of schemes $X$. A crucial step in this is the determination of particular twisted forms ${}^P\!\GG_a$ in the form given by Russell \cite{Russell 1970}, a technique likely to be of independent interest.

Let us quote Bombieri and Mumford \cite[p.~198]{Bombieri; Mumford 1976}: ``The study of special low characteristics can be one of two types: amusing or tedious.  It all depends on whether the peculiarities encountered are felt to be meaningful variations of the general picture [\ldots]  or are felt instead to be accidental and random, due for instance to numerological interactions [\ldots].''  We think that our results amply show that what Bombieri and Mumford have uncovered for $p\leq 3$ is indeed far from accidental, and belong to a structural hierarchy that indeed can be understood from general principles.

\medskip
The paper is organized as follows: In Section~\ref{Invertible additive} we develop the theory of additive polynomials over rings that contain nilpotents, study the resulting groups of units, and introduce $U_n(R)$.  The ensuing actions on polynomial rings are discussed in Section~\ref{Actions on polynomial}.  Building on these preparations, we give in Section~\ref{Scheme-theoretic reinterpretation} a scheme-theoretic reinterpretation and determine the Lie algebra and the upper and lower central series for the infinitesimal group scheme $U_n$.  In Section~\ref{Compactifications} we examine the equivariant compactifications of the affine line $\AA^1$ and introduce our numerical semigroup $\Gamma_{p,n}$ and the ensuing curve $X_{p,n}$, which turns out to be equivariantly normal. We determine the numerical invariants and deduce several crucial geometric consequences in Section~\ref{Complete intersection}.  In Section~\ref{Projective model} we show that our curve can also be seen as a global complete intersection $X_{p,n}\subset\PP^{n+1}$.  In Section~\ref{Automorphism group} its automorphism group scheme is determined.  Section~\ref{Equivariant normality} contains general results on the relation between equivariant normality and the existence of twisted forms that are regular, which is then applied to our curves $X_{p,n}$.  Section~\ref{Non-abelian cohomology} is devoted to twisting and the computation of non-abelian cohomology for semidirect products. We apply this in Section~\ref{Set torsors} to describe the collection of all twisted forms of $X_{p,n}$ in the cases $n=1$ and $n=2$.

\subsection*{Acknowledgments}
We heartily thank the two referees for their thorough reading and many valuable suggestions, which helped to improve the paper.

\section{Invertible additive polynomials}
\label{Invertible additive}

In this section we gather purely algebraic facts that go into the definition of our infinitesimal group scheme $U=U_n$ in Section~\ref{Scheme-theoretic reinterpretation}.  Fix some ring $R$ of characteristic $p>0$, and let $x$ be an indeterminate.  Recall that polynomials of the form
$$
P(x) = \sum_{i=0}^n \lambda_i x^{p^i}=\lambda_0x+\lambda_1x^p+\ldots+\lambda_nx^{p^n}\in R[x]
$$
are called \emph{additive polynomials}. Another widespread designation is \emph{$p$-polynomials}.  Clearly, the set of all such polynomials is stable under addition $P(x) + Q(x)$ and substitution $P(Q(x))$.  These two composition laws enjoy the distributive property. In fact, the additive polynomials form an \emph{associative ring} with respect to these laws, with zero element $P(x)=0$ and unit element $P(x)=x$.  Let us call it the \emph{ring of additive polynomials}.

It can also be seen as the \emph{skew polynomial ring} $R[F;\sigma]$, where $\sigma\colon R\ra R$ designates the Frobenius map $\lambda\mapsto\lambda^p$.  Elements are polynomials in the formal symbol $F$, and multiplication is subject to the relations $F\lambda = \lambda^pF$. In other words, we have
\begin{equation}
\label{multiplication}
\sum_i \lambda_iF^i\cdot \sum_j \mu_jF^j=\sum_k \left(\sum_{i+j=k}\lambda_i\mu_j^{p^i} \right)F^k,
\end{equation}
a modification of the usual Cauchy multiplication.  The identification of the skew polynomial ring with the ring of additive polynomials is given by $\lambda\mapsto \lambda x$ and $F\mapsto x^p$, so that $\sum\lambda_iF^i$ corresponds to $\sum \lambda_ix^{p^i}$.  For psychological reasons, we strongly prefer to make computations in the skew polynomial ring. In the next section, when it comes to actions on the affine line, we shall turn back to the ring of additive polynomials.

Over ground fields, the ring of additive polynomials was introduced and studied by Ore \cite{Ore 1933}.  A discussion from the perspective of skew polynomial rings was given by Jacobson \cite[Chapter~3]{Jacobson 1943}.  More recent presentations appear in \cite[Chapter~1]{Goss 1996} and \cite[Chapter~2]{Goodearl; Warfield 2004}.  For our purposes, however, it will be crucial to allow nilpotent elements.  The following two propositions reveal that nilpotent and invertible elements in $R[F;\sigma]$ are characterized as in usual polynomial rings.

\begin{proposition}\label{nilpotent in skew}
An element $\sum_{i=0}^n \lambda_iF^i$ of the skew polynomial ring $R[F;\sigma]$ is nilpotent if and only if $\lambda_0,\ldots,\lambda_n\in \Nil(R)$.
\end{proposition}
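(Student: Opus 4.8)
The plan is to mimic the classical fact that $\Nil(R[x]) = \Nil(R)[x]$, adapting the argument to the twisted multiplication \eqref{multiplication}. Write $a=\sum_{i=0}^n\lambda_iF^i$.

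For the implication ``$\lambda_i\in\Nil(R)$ for all $i$'' $\Rightarrow$ ``$a$ nilpotent'', the point is that $\Nil(R)$ itself need not be a nilpotent ideal, so I would pass to the finitely generated subideal $\mathfrak{a}=(\lambda_0,\dots,\lambda_n)\subseteq R$: being generated by nilpotents it satisfies $\mathfrak{a}^m=0$ for some $m\geq 1$. Since $\sigma(\lambda)=\lambda^p$, the ideal $\mathfrak{a}$ is $\sigma$-stable, hence $\mathfrak{b}=\{\sum_i\mu_iF^i\mid \mu_i\in\mathfrak{a}\}$ is a two-sided ideal of $R[F;\sigma]$; closure under left and right multiplication is checked directly from $F\lambda=\lambda^pF$. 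Expanding a product of $m$ elements of $\mathfrak{b}$ via \eqref{multiplication}, each coefficient becomes a sum of $m$-fold products of elements of $\mathfrak{a}$ (the twists $\sigma^j$ keep every factor inside $\mathfrak{a}$), so it lies in $\mathfrak{a}^m=0$. Thus $\mathfrak{b}^m=0$, and since $a\in\mathfrak{b}$ we conclude $a^m=0$.

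For the converse I would argue the contrapositive: if not all $\lambda_i$ are nilpotent, then because $\Nil(R)=\bigcap\idealp$ over all primes, there is a prime $\idealp\subset R$ with $a\not\equiv 0$ modulo $\idealp$. The reduction $R\to D:=R/\idealp$ is compatible with Frobenius, hence induces a ring homomorphism $R[F;\sigma]\to D[F;\sigma]$, so it suffices to see that $D[F;\sigma]$ has no nonzero nilpotents. In fact it is a domain: if $b,c\in D[F;\sigma]$ are nonzero of $F$-degrees $d,e$ with leading coefficients $b_d,c_e\neq 0$, then \eqref{multiplication} gives the coefficient of $F^{d+e}$ in $bc$ as $b_dc_e^{p^d}$, which is nonzero since $D$ is an integral domain (equivalently, its Frobenius is injective). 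Hence $bc\neq 0$, the image of $a$ in $D[F;\sigma]$ is a nonzero non-nilpotent element, and therefore $a$ is not nilpotent.

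The argument is essentially routine; the only points that need care are (i) replacing $\Nil(R)$ by the finitely generated ideal $\mathfrak{a}$ so that actual nilpotence of an ideal is available, and (ii) tracking the Frobenius twists through the skew multiplication — inward, to stay inside $\mathfrak{a}$ for the first implication, and in the leading-term computation $b_dc_e^{p^d}$ for the second. Neither constitutes a genuine obstacle.
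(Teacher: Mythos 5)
Your proof is correct and follows essentially the same route as the paper: nilpotence of the coefficients forces nilpotence of the element by a pigeonhole bound on products (you package this as $\mathfrak{a}^m=0$ for the finitely generated ideal of coefficients, the paper applies the pigeonhole directly to the expansion of $a^r$, but the content is identical), and the converse goes by reducing modulo a prime avoiding a non-nilpotent coefficient and using that the skew polynomial ring over a domain is a domain. The only cosmetic differences are that you work over $R/\mathfrak{p}$ rather than the residue field $\kappa(\mathfrak{p})$ and you prove the domain property by the leading-coefficient computation $b_dc_e^{p^d}\neq 0$ instead of citing Jacobson; both variants are fine.
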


\begin{proof}
Suppose that all coefficients are nilpotent, say $\lambda_i^d=0$. For each $r\geq 0$, we have
$$ \left(\sum_i\lambda_iF^i\right)^r = \sum_k \left(\sum_{i_1+\ldots+i_r=k} \lambda_{i_1}^{v_1}\cdots\lambda_{i_r}^{v_r}\right) F^k
$$
for certain exponents $v_1,\ldots,v_r\geq 1$ whose precise values are irrelevant in the following reasoning: If $r>(n+1)(d-1)$, each tuple $0\leq i_1,\ldots,i_r\leq n$ must contain the $d$-fold repetition of at least one value $0\leq i\leq n$. Then the product $ \lambda_{i_1}^{v_1}\cdots\lambda_{i_r}^{v_r}$ vanishes, and so does the above $r$-fold power.

Conversely, suppose that some $\lambda_s\in R$ is not nilpotent. Choose a prime $\primid\subset R$ not containing $\lambda_s$, and set $K=\kappa(\primid)$. Then the image of $\sum_{i=0}^n \lambda_iF^i$ in the skew polynomial ring $K[F;\sigma]$ is a non-zero nilpotent element. On the other hand, $K[F;\sigma]$ is a domain (this follows from \cite[Chapter~3, Section~1, bottom paragraph on p.~29]{Jacobson 1943}), giving a contradiction.
\end{proof}

This has the following important consequence. 

\begin{proposition}\label{invertible in skew}
An element $P=\sum_{i=0}^n \lambda_iF^i$ of the skew polynomial ring $R[F;\sigma]$ is invertible if and only if $\lambda_0\in R^\times$ and $\lambda_1,\ldots,\lambda_n\in \Nil(R)$.
\end{proposition}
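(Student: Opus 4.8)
The plan is to treat the two implications separately: the ``if'' direction is a short factorization, and the ``only if'' direction reduces to the case of a field, where invertible elements of the skew polynomial ring are easy to pin down.

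For the ``if'' direction, assume $\lambda_0\in R^\times$ and $\lambda_1,\dots,\lambda_n\in\Nil(R)$. Since $\lambda_0$ is a unit of the commutative ring $R$, it is a unit of $R[F;\sigma]$, and I would write
$$
P=\lambda_0\Bigl(1+\textstyle\sum_{i=1}^n(\lambda_0^{-1}\lambda_i)F^i\Bigr).
$$
Because $R$ is commutative, each $\lambda_0^{-1}\lambda_i$ is again nilpotent, so by Proposition~\ref{nilpotent in skew} the element $M=\sum_{i=1}^n(\lambda_0^{-1}\lambda_i)F^i$ is nilpotent in $R[F;\sigma]$; then $1+M$ is invertible via the finite geometric series $(1+M)\sum_{j\geq 0}(-M)^j=1$, which is valid in any, possibly noncommutative, ring because $1$ is central. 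Hence $P$ is a product of two units. The point to flag here is that one cannot simply invoke ``unit plus nilpotent is a unit'', since $R$ is \emph{not} central in $R[F;\sigma]$; splitting off the scalar $\lambda_0$ first is exactly what reduces matters to the central element $1$. This is the only genuinely delicate step in the argument.

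For the ``only if'' direction, assume $P$ is invertible. The assignment $F\mapsto 0$, $\lambda\mapsto\lambda$ defines a ring homomorphism $\epsilon\colon R[F;\sigma]\to R$: it respects the defining relation $F\lambda=\lambda^pF$ because both sides map to $0$. A ring homomorphism carries units to units, so $\lambda_0=\epsilon(P)\in R^\times$. To show $\lambda_1,\dots,\lambda_n\in\Nil(R)$, it suffices to prove $\lambda_i\in\primid$ for every prime ideal $\primid\subset R$. Fix such a $\primid$, set $K=\kappa(\primid)$, and push $P$ forward along the homomorphism $R[F;\sigma]\to K[F;\sigma]$ (the Frobenius of $R$ descends to that of $K$, so this is well defined). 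The image $\overline P=\sum_{i=0}^n\overline{\lambda_i}F^i$ is invertible in $K[F;\sigma]$.

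Finally, $K[F;\sigma]$ carries the usual degree function, and because $\sigma$ is injective on the field $K$, the top-degree term of a product of nonzero elements $\sum a_iF^i$ and $\sum b_jF^j$ of degrees $m$ and $\ell$ is $a_mb_\ell^{p^m}F^{m+\ell}\neq 0$ by the multiplication rule~\eqref{multiplication}; in particular this reproves that $K[F;\sigma]$ is a domain, as already used in the proof of Proposition~\ref{nilpotent in skew}. Thus the degree is additive on products, so every unit of $K[F;\sigma]$ has degree $0$ and hence lies in $K^\times$. Consequently $\overline{\lambda_1}=\dots=\overline{\lambda_n}=0$ in $K$, i.e.\ $\lambda_1,\dots,\lambda_n\in\primid$. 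Intersecting over all primes $\primid\subset R$ yields $\lambda_1,\dots,\lambda_n\in\Nil(R)$, which completes the proof.
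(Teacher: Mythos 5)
Your proof is correct and follows essentially the same route as the paper: the sufficiency is the identical factorization $P=\lambda_0(1+M)$ with $M$ nilpotent by Proposition~\ref{nilpotent in skew}, and the necessity likewise reduces modulo primes to $K[F;\sigma]$ over a residue field and exploits that this ring is a domain, so that units have degree zero. The only cosmetic differences are that you run the residue-field argument at every prime and conclude directly via additivity of degree (reproving the domain property in passing), whereas the paper picks a maximal non-nilpotent coefficient, cites Jacobson for the domain property, and derives a contradiction from the degree of $PQ=1$.
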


\begin{proof}
The condition is sufficient: Set $\mu_i=-\lambda_i/\lambda_0$.  By Proposition~\ref{nilpotent in skew}, the element $Q= \sum_{i=1}^n\mu_iF^i$ is nilpotent, say $Q^r=0$. Then $1-Q$ is a unit, with inverse $\sum_{j=0}^{r-1} Q^j$. Thus $P=\lambda_0(1-Q)$ is also a unit.

Conversely, suppose  $P Q=Q P =1$. From the group law \eqref{multiplication}, one immediately infers that $\lambda_0\in R^\times$.  Seeking a contradiction, we assume that some $\lambda_s\in R$, $s\geq 1$ is not nilpotent. Choose such $1\leq s\leq n$ maximal.  As above, we find some residue field $K=\kappa(\primid)$ in which $\lambda_s$ is non-zero.  Let $d\geq 0$ be the degree of the image of $Q$.  From \eqref{multiplication} one sees that the image of $1=P Q$ has non-zero term in degree $s+d$, giving a contradiction.
\end{proof}

Given a unit of the form $P=\sum\lambda_iF^i$, the inverse $P^{-1}=\sum\mu_jF^j$ can be computed as follows: The condition $ P\cdot P^{-1}=1$ means $\lambda_0\mu_0^{p^0}=1$ and $ \sum_{i+j=k} \lambda_i\mu_j^{p^i}=0$ for $k\geq 1$, which give the recursion formula
\begin{equation}
\label{recursion for inverse}
\mu_0=\lambda_0^{-1}\quadand \mu_k = -\frac{1}{\lambda_0}\sum_{i=1}^k \lambda_i  \mu_{k-i}^{p^i} \quad (k\geq 1).
\end{equation}
The skew polynomial ring comes with an infinite-dimensional matrix representation $R[F;\sigma]\ra\Mat_\infty(R)$, already determined by the assignments
$$
\lambda\longmapsto \begin{pmatrix} \lambda\\ & \lambda^p\\&&\lambda^{p^2}\\&&&\ddots\end{pmatrix}\quadand 
F\longmapsto \begin{pmatrix}0&1\\&0&1\\&&0&1\\&&&\ddots&\ddots \end{pmatrix}.
$$
More explicitly, this homomorphism  is given by 
\begin{equation}
\label{unitriangular}
\sum_{i=0}^n\lambda_iF^i\longmapsto \left(\lambda^{p^r}_{s-r}\right)_{0\leq r\leq s<\infty} =  
\begin{pmatrix} 
\lambda_0	&\lambda_1	&\lambda_2		&\cdots\\ 
	&\lambda_0^p&\lambda_1^p	& \lambda_2^p	&\cdots\\ 
	&	& \lambda_0^{p^2}	& \lambda_1^{p^2} 	& \lambda_2^{p^2}	&\cdots\\  
	&	&		& \ddots		&\ddots 		&\ddots
\end{pmatrix}.
\end{equation}
Obviously, the map is injective and takes values in the row-finite upper triangular matrices.  Note that for each $d\geq 0$, the top left submatrix indexed by $0\leq r,s\leq d-1$ yields a subrepresentation $R[F;\sigma]\ra\Mat_d(R)$.

We now examine the unit group $R[F;\sigma]^\times$ in more detail, for the time being as an abstract group.  It comes with matrix representations $R[F;\sigma]^\times \ra \GL_d(R)$, $d\geq 0$.  Note that this factors over the group of invertible upper triangular matrices $T_d(R)\subset\GL_d(R)$.  Write $\Fil^d$ for the kernels.  Clearly $\Fil^0=R[F;\sigma]^\times$, whereas
$$
\Fil^d=\left\{1+\sum_{i=d}^n\lambda_iF^i\mid\text{$n\geq d$ and $\lambda_i\in\Nil(R)$}\right\} \quad (d\geq 1).
$$
These form a descending chain of normal subgroups, in other words, a \emph{normal series}.  Clearly, their intersection contains only the unit element.
 
\begin{proposition}\label{normal series}
The normal series $\Fil^d$ on $R[F;\sigma]^\times$ has quotients
$$
\Fil^0/\Fil^1=R^\times\quadand \Fil^d/\Fil^{d+1}=\Nil(R) \quad(d\geq 1).
$$
Moreover, we have the commutator formula $[\Fil^1,\Fil^d]\subset\Fil^{d+1}$ for all $d\geq 1$.
\end{proposition}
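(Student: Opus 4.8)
The plan is to read off all three assertions from the degree filtration on the skew polynomial ring. For $d\geq1$, let $\shI^{(d)}\subset R[F;\sigma]$ denote the set of sums $\sum_{i\geq d}\lambda_iF^i$ all of whose coefficients lie in $\Nil(R)$, so that the formula for $\Fil^d$ stated above reads $\Fil^d=1+\shI^{(d)}$ (for $d\geq1$). Since $\Nil(R)$ is an ideal and degrees add under the multiplication~\eqref{multiplication}, each $\shI^{(d)}$ is a two-sided ideal of $R[F;\sigma]$ satisfying $\shI^{(d)}\shI^{(e)}\subseteq\shI^{(d+e)}$; and every element of $\shI^{(1)}$ is nilpotent by Proposition~\ref{nilpotent in skew}. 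The remaining work is bookkeeping with these facts.

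For the quotient $\Fil^0/\Fil^1$, I would note that extracting the degree-zero coefficient is a ring homomorphism $R[F;\sigma]\to R$, $\sum\lambda_iF^i\mapsto\lambda_0$, hence induces a group homomorphism $R[F;\sigma]^\times\to R^\times$. It is surjective, since constants $\lambda\in R^\times$ are units, and by Proposition~\ref{invertible in skew} a unit lies in its kernel exactly when $\lambda_0=1$, i.e.\ exactly when it lies in $\Fil^1$. Hence $\Fil^0/\Fil^1\cong R^\times$.

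For $d\geq1$, I would use the map $\Fil^d\to\Nil(R)$ sending $1+\xi$ to the coefficient of $F^d$ in $\xi\in\shI^{(d)}$. For $1+\xi,1+\eta\in\Fil^d$ one has $(1+\xi)(1+\eta)=1+(\xi+\eta+\xi\eta)$ with $\xi\eta\in\shI^{(2d)}\subseteq\shI^{(d+1)}$, so the coefficient of $F^d$ in the product is the sum of those of $\xi$ and $\eta$; thus the map is a group homomorphism. It is surjective, since $\lambda\in\Nil(R)$ lifts to the unit $1+\lambda F^d$, and its kernel is $\{1+\xi\mid\xi\in\shI^{(d+1)}\}=\Fil^{d+1}$. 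Hence $\Fil^d/\Fil^{d+1}\cong\Nil(R)$ for every $d\geq1$.

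Finally the commutator bound, which is the only step with genuine content. Fix $a=1+A\in\Fil^1$ and $b=1+B\in\Fil^d$, so that $A\in\shI^{(1)}$ and $B\in\shI^{(d)}$. The point to watch is that $\Fil^{d+1}$ is only a multiplicative structure, so one may not simply add elements of it; the remedy is to work in the quotient \emph{ring} $R[F;\sigma]/\shI^{(d+1)}$, which is legitimate precisely because $\shI^{(d+1)}$ is a two-sided ideal. There the cross terms vanish, as $AB,BA\in\shI^{(1)}\shI^{(d)}\subseteq\shI^{(d+1)}$, so the images satisfy $\bar a\bar b=1+\bar A+\bar B=\bar b\bar a$. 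Hence $[a,b]$ lies in the kernel of $R[F;\sigma]^\times\to(R[F;\sigma]/\shI^{(d+1)})^\times$, which, as before, equals $1+\shI^{(d+1)}=\Fil^{d+1}$. Since $\Fil^{d+1}$ is a subgroup and commutators $[a,b]$ with $a\in\Fil^1$, $b\in\Fil^d$ generate $[\Fil^1,\Fil^d]$, we obtain $[\Fil^1,\Fil^d]\subseteq\Fil^{d+1}$.
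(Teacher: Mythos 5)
Your proof is correct. For the two quotient computations you and the paper do essentially the same thing: the paper dismisses them as an immediate consequence of the multiplication rule \eqref{multiplication}, and your coefficient-extraction homomorphisms are exactly that computation written out, together with the appeal to Proposition~\ref{invertible in skew} needed to identify the kernel of $\sum\lambda_iF^i\mapsto\lambda_0$ on units with $\Fil^1$. Where you genuinely diverge is the commutator formula: the paper pushes everything through the matrix representation \eqref{unitriangular} into the unitriangular group $\operatorname{UT}_d(R)$ and cites the corresponding commutator inclusion there (the same source it later uses for Lemma~\ref{higher commutators unitriangular}), whereas you observe that $\Fil^d=1+\shI^{(d)}$ for a descending chain of two-sided ideals satisfying $\shI^{(c)}\shI^{(d)}\subset\shI^{(c+d)}$ and run the standard filtered-ring argument in the quotient rings $R[F;\sigma]/\shI^{(d+1)}$. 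Your route is self-contained, sidesteps any bookkeeping about how the infinite matrix representation interacts with the finite truncations defining $\Fil^d$, and with no extra work yields the stronger inclusion $[\Fil^c,\Fil^d]\subset\Fil^{c+d}$ for all $c,d\geq 1$; the paper's route is shorter on the page at the cost of an external citation. Your remark that one must pass to the quotient \emph{ring} rather than adding elements of the multiplicative set $\Fil^{d+1}$ is exactly the right point of care, and the identification of the kernel of $R[F;\sigma]^\times\to(R[F;\sigma]/\shI^{(d+1)})^\times$ with $1+\shI^{(d+1)}=\Fil^{d+1}$ is justified because every element of $1+\shI^{(d+1)}$ is already a unit by Proposition~\ref{invertible in skew}.
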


\begin{proof}
  The first assertion is an immediate consequence of the group law \eqref{multiplication}.  The commutator formula follows from a corresponding commutator formula for the \emph{unitriangular group} $\operatorname{UT}_d(R)\subset \GL_d(R)$ comprising upper triangular matrices with the unit element on the diagonal (\textit{cf.} \cite[Chapter~6, Example~16.1.2]{Kargapolov; Merzljakov 1979}).
\end{proof}

The multiplicative character $R[F;\sigma]^\times \ra \GL_1(R)=R^\times$ given by $\sum\lambda_iF^i\mapsto \lambda_0$ comes with a canonical splitting $\mu\mapsto \mu F^0$, so we get a semidirect product $R[F;\sigma]^\times=\Fil^1\rtimes R^\times$.  The ensuing conjugacy action of $R^\times$ is given by
$$
\mu \left(\sum\lambda_iF^i\right)\mu^{-1} = \sum \mu^{1-p^i}\lambda_iF^i.
$$
For our applications, it will be important to consider certain smaller subgroups inside the unit group, and the following is crucial throughout.

\begin{proposition}\label{smaller subgroups}{\samepage
For each integer $n\geq 0$, the set
$$
U_n(R)   =\left\{1+\sum_{i=1}^n \lambda_iF^i \mid \text{$\lambda_i^{p^{n-i+1}}=0$ for all $1\leq i\leq n$} \right\}
$$
is a subgroup inside the unit group $R[F;\sigma]^\times$, which is normalized by $R^\times\subset R[F;\sigma]^\times$.  }
\end{proposition}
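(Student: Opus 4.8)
The plan is to verify the group axioms directly, the heart of the matter being a careful bookkeeping of the graded nilpotency exponents $p^{n-i+1}$ under the multiplication law \eqref{multiplication} and the inversion recursion \eqref{recursion for inverse}. First observe that every $P=1+\sum_{i=1}^n\lambda_iF^i$ with $\lambda_i^{p^{n-i+1}}=0$ actually lies in the unit group: its coefficients $\lambda_1,\dots,\lambda_n$ are nilpotent, so $P$ is invertible by Proposition~\ref{invertible in skew}. Also $1\in U_n(R)$, taking all $\lambda_i=0$.

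For closure under multiplication, set $\lambda_0=\mu_0=1$ and $\lambda_i=\mu_j=0$ for indices $>n$; by \eqref{multiplication} the product $(1+\sum\lambda_iF^i)(1+\sum\mu_jF^j)$ equals $1+\sum_{k\ge1}\nu_kF^k$ with $\nu_k=\lambda_k+\mu_k+\sum_{i+j=k,\,i,j\ge1}\lambda_i\mu_j^{p^i}$. Since the $p^{n-k+1}$-power map is additive in characteristic $p$, and since $i+j=k$ forces $p^i\cdot p^{n-k+1}=p^{n-j+1}$, each cross term satisfies $(\lambda_i\mu_j^{p^i})^{p^{n-k+1}}=\lambda_i^{p^{n-k+1}}\mu_j^{p^{n-j+1}}=0$, while $\lambda_k^{p^{n-k+1}}=\mu_k^{p^{n-k+1}}=0$ by hypothesis; hence $\nu_k^{p^{n-k+1}}=0$ for $1\le k\le n$. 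For $n<k\le 2n$ one has $\lambda_k=\mu_k=0$, and in each remaining term $\lambda_i\mu_j^{p^i}$ with $1\le i,j\le n$ the relation $i+j=k\ge n+1$ gives $i\ge n-j+1$, so $\mu_j^{p^i}=0$; thus $\nu_k=0$ and the product again has the required shape, so it belongs to $U_n(R)$.

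For inverses, write $P^{-1}=1+\sum_{j\ge1}\mu_jF^j$; with $\lambda_0=1$ the recursion \eqref{recursion for inverse} becomes $\mu_0=1$ and $\mu_k=-\sum_{i=1}^k\lambda_i\mu_{k-i}^{p^i}$. I would prove by induction on $k$ that $\mu_k^{p^{n-k+1}}=0$ for $1\le k\le n$ and $\mu_k=0$ for $k>n$. In the step, the term with $i=k$ contributes $\pm\lambda_k$, which is killed by $p^{n-k+1}$; for $1\le i<k$ put $m=k-i$, so $i+m=k$ and the inductive hypothesis gives $\mu_m^{p^{n-m+1}}=0$, whence $(\mu_m^{p^i})^{p^{n-k+1}}=\mu_m^{p^{n-m+1}}=0$ and therefore $(\lambda_i\mu_m^{p^i})^{p^{n-k+1}}=0$. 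When $k>n$ the same computation (now $k\ge n+1$) shows every term already vanishes without taking further powers: if $m=k-i\le n$ then $\mu_m^{p^i}=0$ because $i\ge n-m+1$, and if $m>n$ then $\mu_m=0$ by the inductive hypothesis. Hence $P^{-1}\in U_n(R)$.

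Finally, normalization by $R^\times$ is immediate from the conjugation formula displayed just above the proposition: for $\mu\in R^\times$ one gets $\mu P\mu^{-1}=1+\sum_{i=1}^n(\mu^{1-p^i}\lambda_i)F^i$, and $(\mu^{1-p^i}\lambda_i)^{p^{n-i+1}}=\mu^{(1-p^i)p^{n-i+1}}\lambda_i^{p^{n-i+1}}=0$, so $\mu P\mu^{-1}\in U_n(R)$. The only genuine obstacle is the exponent bookkeeping in the two middle steps; everything hinges on the single identity $p^i\cdot p^{n-k+1}=p^{n-j+1}$ whenever $i+j=k$, which simultaneously forces the cross terms into the correct nilpotency order and makes the coefficients in degrees $>n$ vanish.
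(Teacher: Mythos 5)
Your proof is correct and follows essentially the same route as the paper's: direct verification of closure under the multiplication law \eqref{multiplication} and the inversion recursion \eqref{recursion for inverse}, with the same exponent bookkeeping $p^i\cdot p^{n-k+1}=p^{n-j+1}$ for $i+j=k$, and the same conjugation formula for normalization by $R^\times$. The only (harmless) differences are that you make the induction for the inverse coefficients explicit and cite Proposition~\ref{invertible in skew} up front to guarantee invertibility.
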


\begin{proof}
Clearly the set contains the unit element.  Suppose that $P=1+\sum_{i=1}^n\lambda_iF^i$ and $Q=1+\sum_{j=1}^n\mu_j F^j$ belong to $U_n(R)$, and write the product as $PQ=1+\sum_{k=1}^m \alpha_kF^k$, with coefficients $\alpha_k=\sum_{i+j=k}\lambda_i\mu_j^{p^i}$.  For $k\geq n+1$, each summand $\lambda_i\mu_j^{p^i}$ vanishes: If $j\geq n+1$, we already have $\mu_j=0$, and if $j\leq n$, we get $i=k-j\geq n+1-j$ and thus $\mu_j^{p^i}=0$.  For $k\leq n$, we have $\alpha_k^{p^{n-k+1}} = \sum_{i+j=k}\lambda_i^{p^{n-k+1}}\mu_j^{p^{n-j+1}}$, which vanishes because $\mu_j^{p^{n-j+1}}=0$. Thus $PQ\in U_n(R)$.

Next consider the inverse element $P^{-1}=\sum_{j\geq 0}\beta_jF^j$.  The recursion formula \eqref{recursion for inverse} gives $\beta_0=1$ and $\beta_k = - \sum_{i=1}^k \lambda_i \beta_{k-i}^{p^i} $ for $k\geq 1$.  For $k\geq n+1$, each summand $\lambda_i \beta_{k-i}^{p^i}$ vanishes because $i\geq n- (k-i)+1$.  For $k\leq n$, we have $(\lambda_i\beta_{k-i}^{p^i})^{p^{n-k+1}}=\lambda_i^{p^{n-k+1}} \beta_{k-i}^{p^{n-(k-i)+1}}$, where the second factor vanishes.  Thus $P^{-1}\in U_n(R)$.

Finally, for each $\mu\in R^\times$, we have $\mu\cdot P\cdot \mu^{-1}=1+\sum_{i=1}^n \mu^{1-p^i}\lambda_iF^i$, which clearly belongs to $U_n(R)$.  So the latter is normalized by $R^\times$.
\end{proof}

\section{Actions on polynomial rings}
\label{Actions on polynomial}

We keep the set-up as in the previous section. Obviously, the multiplicative monoid of additive polynomials $\sum \lambda_ix^{p^i}$ acts on the polynomial ring $R[x]$ via substitution of the indeterminate, in other words by $P(x)\mapsto P(\sum \lambda_ix^{p^i})$, and one easily checks that this is an action \emph{from the right}.  In turn, we have a group action $R[x]\times R[F;\sigma]^\times\ra R[x]$ from the right, given by
\begin{equation}
\label{infinitesimal action}
Q(x)\ast \sum \lambda_iF^i=Q\left(\sum \lambda_ix^{p^i}\right).
\end{equation}
Note that the action of the multiplicative group $ \GG_m(R)=R^\times$ via $Q(x)\ast\lambda_0=Q(\lambda_0x)$ is a special case of this.  Furthermore, we have the translation action of the additive group $ \GG_a(R)=R$, defined by
\begin{equation}
\label{additive action}
Q(x)\ast \alpha=Q(x+\alpha).
\end{equation}
Obviously, these actions are faithful, and we arrive at inclusions of $\GG_a(R)$ and $R[F;\sigma]^\times$ into the opposite automorphism group of $R[x]$.

\begin{proposition}
\label{iterated normalization}
Inside the opposite automorphism group of $R[x]$, the group $\GG_a(R)$ is normalized by $R[F;\sigma]^\times$, and the intersection $\GG_a(R)\cap R[F;\sigma]^\times$ is trivial.
 \end{proposition}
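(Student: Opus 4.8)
The plan is to prove both parts by elementary polynomial computations, relying throughout on two basic facts: an additive polynomial $P(x)=\sum_i\lambda_ix^{p^i}$ is additive in its variable, $P(a+b)=P(a)+P(b)$ (Frobenius in characteristic $p$), and the composition $P(Q(x))$ of additive polynomials corresponds exactly to the product in the skew polynomial ring $R[F;\sigma]$ under the identification $\sum\lambda_iF^i\leftrightarrow\sum\lambda_ix^{p^i}$, as one sees by expanding $\sum_i\lambda_i(\sum_j\mu_jx^{p^j})^{p^i}$ and matching it with the modified Cauchy multiplication \eqref{multiplication}.

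For the normalization, I would fix a translation $\alpha\in\GG_a(R)$ and a unit $P\in R[F;\sigma]^\times$, write $\tilde P(x)=\sum_i\lambda_ix^{p^i}$ and $\tilde Q(x)=\sum_j\mu_jx^{p^j}$ for the additive polynomials attached to $P$ and $P^{-1}$, and compute the effect on an arbitrary $Q(x)\in R[x]$ of applying $P$, then the translation by $\alpha$, then $P^{-1}$. The first step produces $Q(\tilde P(x))$; the translation then gives $Q(\tilde P(x+\alpha))$, which equals $Q(\tilde P(x)+\tilde P(\alpha))$ by additivity of $\tilde P$; and applying $P^{-1}$ substitutes $x\mapsto\tilde Q(x)$, yielding $Q(\tilde P(\tilde Q(x))+\tilde P(\alpha))=Q(x+\tilde P(\alpha))$ because $\tilde P(\tilde Q(x))=x$ (as $PP^{-1}=1$). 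Hence conjugating $\alpha$ by $P$ produces the translation by $\tilde P(\alpha)=\sum_i\lambda_i\alpha^{p^i}\in R$, again an element of $\GG_a(R)$; running this over all $P$ (in particular over $P^{-1}$ as well) gives $P\GG_a(R)P^{-1}=\GG_a(R)$. I do not expect any obstacle here beyond bookkeeping with the right-action convention, which is harmless since being normalized by a subgroup is insensitive to passage to the opposite group.

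For the triviality of the intersection, I would observe that an automorphism of $R[x]$ lying in both subgroups is simultaneously the translation $x\mapsto x+\alpha$ for some $\alpha\in R$ and the substitution $x\mapsto\sum_{i=0}^n\lambda_ix^{p^i}$ attached to some element $\sum_i\lambda_iF^i$ of $R[F;\sigma]$. Since an $R$-algebra endomorphism of $R[x]$ is determined by the image of $x$, this forces $x+\alpha=\sum_{i=0}^n\lambda_ix^{p^i}$ in $R[x]$; comparing the coefficients of $x^0$, of $x^1=x^{p^0}$, and of $x^{p^i}$ for $i\geq1$ --- exponents that are pairwise distinct because $p\geq2$ --- gives $\alpha=0$, $\lambda_0=1$ and $\lambda_i=0$ for $i\geq1$, so the automorphism is the identity. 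This part is purely routine; indeed the whole proposition is a direct verification once the conventions are fixed, so there is no real obstacle.
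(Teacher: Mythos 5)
Your proposal is correct and follows essentially the same route as the paper: the normalization claim reduces in both cases to the identity $\sum_i\lambda_i(x+\alpha)^{p^i}=\sum_i\lambda_ix^{p^i}+\sum_i\lambda_i\alpha^{p^i}$ (the paper phrases this as the coset equality $P\cdot\GG_a(R)=\GG_a(R)\cdot P$ rather than computing the conjugate explicitly), and the triviality of the intersection is obtained in both by comparing coefficients in $x+\alpha=\sum_i\lambda_ix^{p^i}$, where the constant term alone already forces $\alpha=0$.
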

 
\begin{proof}
Suppose that we have elements 
$$
\alpha\in \GG_a(R)\quadand   \sum \lambda_iF^i\in R[F;\sigma]^\times.
$$
For the first assertion, it suffices to check that $P\cdot\GG_a(R) = \GG_a(R)\cdot P$.  This indeed holds because one computes $\sum\lambda_i(x+\alpha)^{p^i} = (\sum\lambda_i x^{p^i}) +\alpha'$ with $\alpha'=\sum_{i=0}^n \lambda_i\alpha^{p^i}$.  It remains to verify the assertion on the intersection.  Suppose  $\alpha=P$ as automorphisms of $R[x]$; in other words, $x+\alpha = \sum \lambda_ix^{p^i}$. Comparing coefficients at the constant terms gives $\alpha=0$; hence the intersection $\GG_a(R)\cap R[F;\sigma]^\times$ is trivial.
\end{proof}

In turn, we get an inclusion of $\GG_a(R)\rtimes R[F;\sigma]^\times$ into the opposite automorphism group of $R[x]$.  Later, we seek to extend part of this action to certain subrings of $R[x^{-1}]$ in a compatible way. The following observation will be useful: Let $S\subset R[x]$ be the multiplicative system of all monic polynomials. The resulting localization is denoted by $R(x)=S^{-1}R[x]$. Since monic polynomials are regular elements from the polynomial ring, the localization map is injective, and we get an inclusion $R[x]\subset R(x)$.

\begin{proposition}
\label{extension to fractions}
The action from the right of the group $\GG_a(R)\rtimes R[F;\sigma]^\times$ on the polynomial ring $R[x]$ uniquely extends to $R(x)$.
\end{proposition}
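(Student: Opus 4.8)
The plan is to show that every ring automorphism $\phi$ of $R[x]$ arising from an element of $\GG_a(R)\rtimes R[F;\sigma]^\times$ carries the multiplicative set $S$ of monic polynomials into the unit group $R(x)^\times$. Granting this for both $\phi$ and $\phi^{-1}$, the universal property of the localization map $R[x]\ra R(x)$ produces a ring endomorphism of $R(x)$ extending $\phi$ (resp.\ $\phi^{-1}$), and these two endomorphisms are mutually inverse, so $\phi$ extends to an automorphism of $R(x)$; the extension is unique because $R[x]\ra R(x)$ is an epimorphism of rings. Since forming extensions respects composition, it suffices to verify the claim on a set of generators of the group, and by the discussion following Proposition~\ref{iterated normalization} the group is generated by the translations $t_\alpha\colon x\mapsto x+\alpha$ together with the substitutions $\sigma_P\colon x\mapsto P(x)=\sum_{i=0}^n\lambda_ix^{p^i}$ attached to invertible additive polynomials $P$; for such $P$ one has $\lambda_0\in R^\times$ and $\lambda_1,\dots,\lambda_n\in\Nil(R)$ by Proposition~\ref{invertible in skew}.

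For a translation the matter is trivial: a monic polynomial $f$ of degree $d$ goes to $f(x+\alpha)$, which is again monic of degree $d$, so $t_\alpha(S)=S$. The substantial case is $\sigma_P$. Write $P(x)=\lambda_0x+N(x)$ with $N(x)=\sum_{i=1}^n\lambda_ix^{p^i}$. A pigeonhole estimate on the finitely many nilpotent coefficients $\lambda_1,\dots,\lambda_n$ -- analogous to the proof of Proposition~\ref{nilpotent in skew}, but now carried out for the ordinary multiplication of $R[x]$ rather than that of $R[F;\sigma]$ -- shows that $N(x)$ is a nilpotent element of the ring $R[x]$. Now let $f=\sum_{j=0}^dc_jx^j\in S$, so that $c_d=1$. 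Expanding $f(P(x))=\sum_jc_j(\lambda_0x+N(x))^j$ by the binomial theorem and separating off the terms that involve a positive power of $N(x)$ gives
\[
f(P(x))=f(\lambda_0x)+N(x)\cdot w
\]
for some $w\in R[x]$. The first summand equals $\lambda_0^d\bigl(x^d+\sum_{j<d}c_j\lambda_0^{j-d}x^j\bigr)$, that is, $\lambda_0^d$ times a monic polynomial; since $\lambda_0^d\in R^\times$, it lies in $R(x)^\times$. The second summand is nilpotent in $R[x]$, hence nilpotent in $R(x)$. A unit plus a nilpotent element is a unit, so $f(P(x))\in R(x)^\times$, as required. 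The inverse of $\sigma_P$ is $\sigma_{P^{-1}}$, and $P^{-1}$ is again an invertible additive polynomial with unit constant term and nilpotent higher coefficients, by Proposition~\ref{invertible in skew} together with the recursion~\eqref{recursion for inverse}; so the same argument applies to $\sigma_{P^{-1}}$. This handles all the generators, and the proposition follows.

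The only genuine point of difficulty is the nilpotency of $N(x)$ in $R[x]$; once that is established, everything else is the bookkeeping of localization combined with the elementary observation that $f(P(x))$ and $f(\lambda_0x)$ differ by a multiple of $N(x)$.
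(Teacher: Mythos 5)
Your proof is correct and rests on the same key observation as the paper's: applying a substitution $x\mapsto\lambda_0x+N(x)$ to a monic polynomial produces a unit of $R$ times a monic polynomial plus a nilpotent element of $R[x]$, which is invertible in $R(x)$ since a unit plus a nilpotent is a unit. The only difference is packaging --- the paper enlarges $S$ to the action-stable multiplicative system $\tilde{S}=\{\lambda P+Q \mid \text{$P$ monic, $Q$ nilpotent, $\lambda\in R^\times$}\}$ and identifies $\tilde{S}^{-1}R[x]$ with $R(x)$, whereas you verify the unit condition generator by generator and invoke the universal property of localization directly.
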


\begin{proof}
Uniqueness immediately follows from the universal property of localizations.  To see existence, consider the larger multiplicative system $\tilde{S}\subset R[x]$ comprising the polynomials of the form $\lambda P+Q$ with $P$ monic, $Q$ nilpotent, and $\lambda\in R^\times$.  Obviously, this system is stable with respect to the actions \eqref{infinitesimal action} and \eqref{additive action}, and we thus get an induced action on $\tilde{S}^{-1}R[x]$.  On the other hand, the inclusion $S\subset\tilde{S}$ gives a canonical map $S^{-1}R[x]\ra \tilde{S}^{-1}R[x]$.  It remains to verify that every $\lambda P+Q$ as above becomes invertible in $S^{-1}R[x]$. Indeed, in the factorization $\lambda P+Q = P/1\cdot (\lambda+ Q/ P)$, the second factor also is a unit because $\lambda$ is invertible and $Q/ P$ is nilpotent.
  \end{proof}

\section{Scheme-theoretic reinterpretation}
\label{Scheme-theoretic reinterpretation}

Fix a ground field $K$ of characteristic $p>0$. In this section we take a more geometric point of view and reinterpret and extend the results of the preceding sections in terms of schemes and group schemes.  We now regard
$$
U_n(R) =U_{n,K} (R) =  \left\{1+\sum_{i=1}^n\lambda_iF^i\in R[T;\sigma]^\times\mid \text{ $\lambda_i^{p^{n-i+1}}=0$ for $1\leq i\leq n$}\right\}
$$
as a group-valued functor $U_n$ on the category of $K$-algebras $R$. Clearly, the natural transformation
\begin{equation}
\label{underlying scheme}
\alpha_{p^n}\times\alpha_{p^{n-1}}\times\cdots\times\alpha_p\lra U_n,\quad (\lambda_1,\ldots,\lambda_n)\longmapsto 1+\sum_{i=1}^n\lambda_iF^i
\end{equation}
is an isomorphism of set-valued functors, with group laws ignored. In turn, $U_n$ is a finite group scheme with coordinate ring $\bigotimes_{i=1}^n K[x_i]\,/(x_i^{p^{n-i+1}})$ and order $|U_n|=h^0(\O_{U_n})=p^{n(n+1)/2}$.  It contains but one point and is thus an \emph{infinitesimal group scheme}.

One immediately sees that the restriction of \eqref{underlying scheme} to $\alpha_p^{\oplus n}=\alpha_p\times\ldots\times\alpha_p$ respects the group laws and gives an inclusion of group schemes $\alpha_p^{\oplus n}\subset U_n$. Furthermore, for every $m\leq n$, we have canonical inclusions $U_m\subset U_n$ of group schemes.

Recall that each scheme $X$ over our ground field $K$ comes with a \emph{relative Frobenius map} $F\colon X\ra X^{(p)}$, given in functorial terms by $X(R)\stackrel{F}{\ra} X({}_FR)=X^{(p)}(R)$.
Here ${}_FR$ denotes the abelian group $R$, viewed as an $R$-algebra via the absolute Frobenius map $f\mapsto f^p$, and $X^{(p)}=X\otimes_K({}_FK)$. Note that $R={}_FR$ as an $\FF_p$-algebra.  Hence $X(R)=X({}_FR)$ and thus $X=X^{(p)}$, provided that $X$ arises as base-change from the prime field $\FF_p$.  For our group scheme $U_n$, the relative Frobenius map takes the form
$$
U_n(R)\lra U_n({}_FR)=U_n(R),\quad 1+\sum\lambda_iF^i\longmapsto 1+\sum\lambda_i^pF^i.
$$
 
\begin{proposition}
\label{image frobenius}
The image of\, $F\colon U_n\ra U_n$ is the subgroup scheme $U_{n-1}$, and its kernel is given by $\alpha_p^{\oplus n}$. In particular, we have an identification of restricted Lie algebras $\Lie(U_n)=K^n$.
\end{proposition}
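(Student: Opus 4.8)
The plan is to read off everything from the effect of the relative Frobenius $F\colon U_n\to U_n$ on $R$-valued points, which by the formula recorded just above the statement is $1+\sum_{i=1}^n\lambda_iF^i\mapsto 1+\sum_{i=1}^n\lambda_i^pF^i$ (recall that $U_n$ is defined over $\FF_p$, so $U_n^{(p)}=U_n$, and that relative Frobenius is a homomorphism of group schemes, so $\ker(F)$ and $\operatorname{im}(F)$ are subgroup schemes). I would treat the kernel directly from a pointwise description, bound the image by a containment, pin it down by an order count using the faithfully flat quotient $U_n\to\operatorname{im}(F)$, and deduce the Lie algebra statement from the left exactness of $\Lie$.

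\emph{Kernel.} An element $1+\sum_{i=1}^n\lambda_iF^i$ of $U_n(R)$ lies in $\ker(F)(R)$ exactly when $\lambda_i^p=0$ for all $i$. Since $p^{n-i+1}\geq p\geq 2$, the condition $\lambda_i^p=0$ already forces $\lambda_i^{p^{n-i+1}}=0$, so $\ker(F)(R)$ coincides with the image of $\{(\lambda_1,\ldots,\lambda_n)\mid\lambda_i^p=0\}=\alpha_p^{\oplus n}(R)$ under the map \eqref{underlying scheme}. By Yoneda this identifies $\ker(F)$ with $\alpha_p^{\oplus n}$ as a closed subscheme of $U_n$, and since \eqref{underlying scheme} restricted to $\alpha_p^{\oplus n}$ is the already-recorded inclusion of group schemes, the identification $\ker(F)=\alpha_p^{\oplus n}$ holds as subgroup schemes.

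\emph{Image.} In $F(1+\sum\lambda_iF^i)=1+\sum\lambda_i^pF^i$ the coefficient of $F^n$ is $\lambda_n^p$, which vanishes because $\lambda_n^{p^{n-n+1}}=\lambda_n^p=0$; and for $1\leq i\leq n-1$ the coefficient $\lambda_i^p$ satisfies $(\lambda_i^p)^{p^{n-i}}=\lambda_i^{p^{n-i+1}}=0$, which is precisely the defining condition for $U_{n-1}$. Hence $F$ factors through the closed subgroup scheme $U_{n-1}\subset U_n$ on all $R$-points, therefore as a morphism of schemes, so $\operatorname{im}(F)\subseteq U_{n-1}$. Because $F$ is a homomorphism of finite group schemes, $U_n\to\operatorname{im}(F)$ is faithfully flat with kernel $\ker(F)=\alpha_p^{\oplus n}$, so $|\operatorname{im}(F)|=|U_n|/|\alpha_p^{\oplus n}|=p^{n(n+1)/2}/p^n=p^{n(n-1)/2}=|U_{n-1}|$. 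A closed subgroup scheme of $U_{n-1}$ whose coordinate ring has the same finite $K$-dimension as that of $U_{n-1}$ must equal $U_{n-1}$, so $\operatorname{im}(F)=U_{n-1}$.

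\emph{Lie algebra.} The differential of relative Frobenius vanishes, so the left exactness of $\Lie$ applied to $1\to\ker(F)\to U_n\to U_n$ gives $\Lie(\alpha_p^{\oplus n})=\Lie(\ker F)=\Lie(U_n)$; since $\Lie(\alpha_p^{\oplus n})=\Lie(\alpha_p)^{\oplus n}=K^n$ with zero bracket and zero $p$-operation, the same restricted Lie algebra structure holds for $\Lie(U_n)$. The main obstacle I anticipate is purely bookkeeping: passing cleanly from the pointwise statements to scheme-theoretic ones, in particular using faithful flatness to upgrade $\operatorname{im}(F)\subseteq U_{n-1}$ to an equality, and making sure that the two descriptions of $U_{n-1}$ inside $U_n$ (the defining nilpotency conditions versus ``the locus where the $F^n$-coefficient vanishes'') really are the same subgroup scheme.
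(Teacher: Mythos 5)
Your proof is correct, and the kernel and Lie-algebra steps coincide with the paper's. The one place where you genuinely diverge is the surjectivity of $F\colon U_n\ra U_{n-1}$. The paper proves this directly as an epimorphism of fppf sheaves: given an $R$-valued point $1+\sum\mu_iF^i$ of $U_{n-1}$, it exhibits an explicit lift $1+\sum\lambda_iF^i$ over the faithfully flat extension $R'=\bigotimes R[\lambda_i]/(\lambda_i^p-\mu_i)$, the point being that $\lambda_i^{p^{n-i+1}}=\mu_i^{p^{n-i}}=0$, so the lift really lands in $U_n(R')$. You instead invoke the quotient theory for finite group schemes to get $|\operatorname{im}(F)|=|U_n|/|\ker F|=p^{n(n-1)/2}=|U_{n-1}|$ and conclude by comparing $K$-dimensions of coordinate rings inside $U_{n-1}$. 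Both routes work: the paper's lift is more elementary and self-contained, while your count leans on the order formula $|G|=|\ker F|\cdot|\operatorname{im}(F)|$ from Demazure--Gabriel (machinery the paper uses elsewhere anyway, e.g.\ for $[U_n:U_{n-1}]$) and spares you from guessing an explicit lift. Your extra care in checking that the restricted structure is transported along the inclusion $\alpha_p^{\oplus n}\subset U_n$, and that the two descriptions of the kernel agree as subgroup schemes and not just as subfunctors of sets, is correct and slightly more explicit than the paper's one-line treatment.
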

 
\begin{proof}
Obviously, the Frobenius map factors over the subgroup scheme $U_{n-1}\subset U_n$. The resulting map $F\colon U_n\ra U_{n-1}$ is indeed an epimorphism because any $R$-valued point $1+\sum\mu_iF^i$ of $U_{n-1}$ arises from the $R'$-valued point $1+\sum\lambda_i$ of $U_n$, for the fppf extension $R'=\bigotimes R[\lambda_i]\,/(\lambda_i^p-\mu_i)$.

An $R$-valued point $1+\sum\lambda_iF^i$ belongs to the kernel of the Frobenius map if and only
if $\lambda_i^p=0$, and hence  $U_n[F]=\alpha_p^{\oplus n}$.
The last assertion follows because $\Lie(\alpha_p^{\oplus n})=K^n$, and  for any group scheme, the inclusion of the Frobenius kernel
induces a bijection on Lie algebras.
\end{proof}

In turn, the relative Frobenius map $F\colon U_n\ra U_n$ yields an extension
\begin{equation}
\label{extension frobenius}
0\lra \alpha_p^{\oplus n} \lra U_n\lra U_{n-1}\lra 1.
\end{equation}
By induction on $n\geq 0$, we infer that the finite group scheme $U_n$ admits a composition series with quotients isomorphic to $\alpha_p$. In particular, $U_n$ is \emph{unipotent}.  Since all Lie brackets are trivial, the adjoint representation $\operatorname{ad}\colon \lieu_n\ra\mathfrak{gl}(\lieu_n)$ of the Lie algebra $\lieu_n=\Lie(U_n)$ is trivial, and the adjoint representation $\Ad\colon U_n\ra\GL_{\lieu_n/k}$ of the group scheme factors over the quotient $U_{n-1}$.  It is not difficult to determine the latter representation: Since the group $U_n(K)$ is trivial, we have
$$ 
\Lie(U_n)=U_n(K[\epsilon])=\left\{1+\epsilon\sum_{r=1}^n\alpha_rF^r\mid\text{$\alpha_r\in K$}\right\},
$$
where $\epsilon$ denotes an indeterminate subject to $\epsilon^2=0$.  The elements $1+\epsilon F^r$, $1\leq r\leq n$, form a basis of this $K$-vector space. With $P =\sum \lambda_iF^i$, where $\lambda_0=1$, and using the relations $\epsilon^2=0$ and $F\epsilon =0$, we get
$$
P^{-1}\cdot (1+\epsilon F^s)\cdot P  = 1+ \epsilon F^sP=
1+  \epsilon\sum_{i=0}^{n-s} \lambda_i^{p^s}  F^{s+i}= \prod_{i=0}^{n-s}\left(1+\epsilon\lambda_i^{p^s}  F^{s+i}\right).
$$
Consequently, $\Ad(P^{-1})$ sends the basis vector $e_s=1+\epsilon F^s$ to the linear combination $\sum_{r=s}^n \lambda^{p^s}_{r-s} e_r$.  Summing up, in the restricted Lie algebra $\Lie(U_n)=K^n$ all brackets and $p$-powers are zero, and the adjoint representation of the group scheme is given by $(\sum\lambda_iF^i)^{-1}\mapsto (\lambda^{p^s}_{r-s})_{n\geq r\geq s\geq 1} $.

As described in Section~\ref{Actions on polynomial}, the groups $U_n(R)$ act from the right on the polynomial ring $R[x]$ via $R$-linear maps.  This is obviously functorial in $R$ and thus constitutes an action of the group scheme $U_n$ on the affine line $\AA^1=\Spec K[x]$.  Note that this is indeed an action \emph{from the left}.  On $R$-valued points, it is given by
$$
(\lambda_1,\ldots,\lambda_n)\ast \mu=\sum_{i=0}^n\lambda_iF^i\ast \mu=\sum_{i=0}^n \lambda_i\mu^{p^i},
$$
where we set $\lambda_0=1$ for convenience. Of course, we also have the canonical actions of the multiplicative group $\GG_m$ and the additive group $\GG_a$, given via $\lambda_0\ast\mu=\lambda_0\mu$ and $\alpha\ast\mu= \mu+\alpha$, respectively.  The following generalizes a key observation of Bombieri and Mumford \cite[Proposition~6]{Bombieri; Mumford 1976}. 

\begin{proposition}
\label{normalization and intersection}
The above actions of the three group schemes on the affine line are faithful.  Inside the sheaf $\Aut_{\AA^1/K}$, the group scheme $\GG_a$ is normalized by $U_n$, and both $\GG_a$ and $U_n$ are normalized by $\GG_m$.  Moreover, the intersections
$$
\GG_a\cap U_n\quadand (\GG_a\rtimes U_n)\cap \GG_m
$$
inside the sheaf $\Aut_{\AA^1/K}$ are trivial.
\end{proposition}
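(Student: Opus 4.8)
The plan is to reduce every assertion to a statement about $R$-valued points, functorially in the $K$-algebra $R$, and then quote the algebraic results of Sections~\ref{Invertible additive}--\ref{Actions on polynomial}. Since $\AA^1=\Spec K[x]$ is affine, $\Aut_{\AA^1/K}(R)$ is the opposite automorphism group of $R[x]$, and the three group schemes sit inside it via the actions recalled above: $\GG_a(R)=R$ and $\GG_m(R)=R^\times$ both embed into $R[F;\sigma]^\times$ (as translations, respectively as the constant term $\mu\mapsto\mu F^0$), while $U_n(R)\subseteq R[F;\sigma]^\times$ by Proposition~\ref{smaller subgroups}. A normalizer relation, or the triviality of an intersection of subsheaves of the group-valued functor $\Aut_{\AA^1/K}$, can be tested on $R$-points for all $R$, since the subfunctors occurring here are monomorphisms into a sheaf of groups; moreover conjugation and intersection are unaffected by passing to the opposite group, so the fact that the action on $\AA^1$ is a left action while the action on $R[x]$ is a right one causes no trouble. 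Hence it suffices to verify the corresponding group-theoretic facts inside the opposite automorphism group of $R[x]$ for each $K$-algebra $R$.

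First I would treat faithfulness: an element of $\GG_a(R)$, $U_n(R)$ or $\GG_m(R)$ acting trivially on $R[x]$ in particular fixes the indeterminate $x$, and in the resulting identity a comparison of the constant term, of the coefficient of $x$, and of the coefficients of $x^{p^i}$ with $i\ge 1$ forces the element to be neutral. (For $U_n$ and $\GG_m$ this is also a special case of the faithfulness of the $R[F;\sigma]^\times$-action noted in Section~\ref{Actions on polynomial}.) Next, that $\GG_a$ is normalized by $U_n$ and that $\GG_a\cap U_n$ is trivial follow at once from Proposition~\ref{iterated normalization}, applied over each $R$, because $U_n(R)\subseteq R[F;\sigma]^\times$; this in particular identifies the product $\GG_a\cdot U_n$ inside $\Aut_{\AA^1/K}$ with the internal semidirect product $\GG_a\rtimes U_n$, so that the final assertion even makes sense. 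That $U_n$ is normalized by $\GG_m$ is exactly the conjugation formula $\mu\cdot(1+\sum\lambda_iF^i)\cdot\mu^{-1}=1+\sum\mu^{1-p^i}\lambda_iF^i$ established in the proof of Proposition~\ref{smaller subgroups}, and that $\GG_a$ is normalized by $\GG_m$ holds because conjugating the translation $x\mapsto x+\alpha$ by the homothety $x\mapsto\mu x$ is again a translation; hence $\GG_m$ normalizes $\GG_a\rtimes U_n$.

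It remains to see that $(\GG_a\rtimes U_n)\cap\GG_m$ is trivial. Here I would argue directly on $R$-points: an element of the intersection is an automorphism of $R[x]$ that is simultaneously of the form $x\mapsto\mu_0 x$ for some $\mu_0\in R^\times$ and of the form $x\mapsto x+\alpha+\sum_{i=1}^n\lambda_ix^{p^i}$ with $\alpha\in\GG_a(R)$ and $1+\sum\lambda_iF^i\in U_n(R)$; comparing constant terms gives $\alpha=0$, comparing the coefficients of $x^{p^i}$ for $i\ge 1$ gives all $\lambda_i=0$, and then comparing the coefficients of $x$ gives $\mu_0=1$, so the element is the identity. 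There is no real obstacle in all this; the only point that deserves a little care is the bookkeeping of the first paragraph, namely that the sheaf-theoretic formulations genuinely reduce to the $R$-point statements over all $R$, and that working in the opposite automorphism group leaves normalizers and intersections unchanged.
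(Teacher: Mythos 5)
Your proof is correct and follows essentially the same route as the paper, whose own proof simply cites Propositions~\ref{iterated normalization} and~\ref{smaller subgroups}; you have just spelled out the reduction to $R$-valued points and the coefficient comparisons that those citations leave implicit. No gaps.
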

 
\begin{proof}
The assertions follow from    Propositions~\ref{iterated normalization} and~\ref{smaller subgroups}.
\end{proof}

We thus have an \emph{iterated semidirect product}, for simplicity written as
\begin{equation}
\label{iterated semidirect product}
\GG_a \rtimes  U_n  \rtimes \GG_m = (\GG_a\rtimes  U_n) \rtimes \GG_m = \GG_a\rtimes  (U_n \rtimes \GG_m),
\end{equation}
acting faithfully on the affine line $\AA^1=\Spec K[x]$. In turn, we get an inclusion of restricted Lie algebras
$$
K\rtimes \Lie(U_n)\rtimes\liegl_1(K)\subset \Lie(\Aut_{\AA^1/K}) = \Der_K(K[x]).
$$ 
The elements on the left-hand side can be seen as tuples $(\alpha,\lambda_1,\ldots,\lambda_n,\lambda_0)$ and correspond to the $K$-derivation $\alpha\frac{\partial}{\partial x} + \sum_{i=1}^n\lambda_ix^{p^i} \frac{\partial}{\partial x} + \lambda_0 x\frac{\partial}{\partial x}$ of the polynomial ring $K[x]$. For example, the derivation $\lambda_1x^p\partial/\partial x\in\Lie(U_n)$ acts via $x\mapsto x+ \epsilon\lambda_1x^p$, which coincides with the action of the group element $1+\epsilon\lambda_1F\in U_n(k[\epsilon])$.

The spectrum of the function field $K(x)$  comes with  a monomorphism
\begin{equation}
\label{inclusion generic point}
\Spec K(x)\lra \Spec K[x]=\AA^1.
\end{equation}
According to Proposition~\ref{extension to fractions}, there is a unique action on $\Spec K(x)$ that makes the above morphism equivariant.

Let us close this section with some observations on central series.  Recall that for a group $G$, the \emph{lower central series} $\Gamma^r=\Gamma^rG$ and the \emph{upper central series} $Z_s=Z_sG$ are inductively defined by
$$
\Gamma^0=G,\quad  \Gamma^{r+1} = [G,\Gamma^r] \quadand Z_0=\{e\},\quad Z_{s+1}/Z_s = Z(G/Z_s).
$$
The group is \emph{nilpotent} if $\Gamma^r=\{e\}$ for some $r\geq 0$, or equivalently $Z_s=G$ for some $s\geq 0$.  Then the smallest such integers coincide, and this number $n$ is called the \emph{nilpotency class} of the group.  Note that $\Gamma_{n-r}\subset Z_r$, but usually this inclusion is not an equality.  We refer to \cite[Chapter~10]{Hall 1959} or \cite[Chapter~6]{Kargapolov; Merzljakov 1979} for basic facts on nilpotent groups.

For group schemes $G$ of finite type, one has basically the same construction, with sheafification involved.  This is straightforward for the higher centers: An $x\in G(R)$ belongs to $Z_{s+1}(R)$ if and only if it commutes with all members of $G(R')$ up to elements of $Z_s(R')$, for all flat extensions $R\subset R'$.  The situation is more complicated for the $\Gamma^r$ because their formation involves schematic images and group scheme closure with respect to the commutator maps $G\times\Gamma^r\ra\Gamma^{r+1}$; see \cite[Expos\'e $\text{VI}_B$, Section 8]{SGA 3a}.

Let us  unravel this for our   $G=U_n$: consider the  closed subschemes
\begin{equation}
\label{central series}
\{1\}=G_0\subset G_1\subset\ldots\subset G_n=U_n
\end{equation}
defined by $G_r(R)=\{1+\sum_{i=n-r+1}^n\lambda_iF^i\}$. 

\begin{proposition}
\label{lower upper series}
The $G_r\subset U_n$ are subgroup schemes, and the series \eqref{central series} coincides with both the upper and the lower central series for the group scheme $U_n$.  The quotients are $G_{r+1}/G_r=\alpha_{p^{r+1}}$.
\end{proposition}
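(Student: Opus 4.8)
The whole proposition rests on the single commutator identity
\[
[U_n,G_r]=G_{r-1}\qquad(1\le r\le n),
\]
with the convention $G_0=\{1\}$; once this is available, normality of the $G_r$, the fact that \eqref{central series} is a central series, and the identification with the lower central series all follow formally, while the upper central series and the quotients need only minor extra work. I would begin with the easy half and its consequences. Observe that $G_r=U_n\cap\Fil^{n-r+1}$ as subfunctors (so $G_n=U_n$ and $G_0=U_n\cap\Fil^{n+1}$); since each $\Fil^d$ is a normal subgroup of $R[F;\sigma]^\times$ and $U_n$ is a subgroup functor by Proposition~\ref{smaller subgroups}, the intersection $G_r$ is a subgroup functor of $U_n$, and it is visibly the closed subscheme cut out by the vanishing of the coordinates $\lambda_1,\dots,\lambda_{n-r}$, hence a closed subgroup scheme. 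Combining $U_n\subseteq\Fil^1$, $G_r\subseteq\Fil^{n-r+1}$ with the commutator formula $[\Fil^1,\Fil^d]\subseteq\Fil^{d+1}$ of Proposition~\ref{normal series} gives at once $[U_n,G_r]\subseteq\Fil^{n-r+2}\cap U_n=G_{r-1}$. In particular every $G_r$ is normal in $U_n$, the chain \eqref{central series} is a central series, and a trivial induction yields $\Gamma^r(U_n)\subseteq G_{n-r}$ and $Z_s(U_n)\supseteq G_s$. For the quotients, the assignment $1+\sum_{i\ge n-r}\lambda_iF^i\mapsto\lambda_{n-r}$ defines a morphism $G_{r+1}\to\GG_a$; reading off the coefficient of $F^{n-r}$ in the product rule \eqref{multiplication} and using that the lower coefficients of elements of $G_{r+1}$ vanish shows it is a homomorphism with kernel $G_r$ and scheme-theoretic image $\alpha_{p^{r+1}}$, and since $G_{r+1}\cong\alpha_{p^{r+1}}\times G_r$ as schemes the homomorphism is faithfully flat, so $G_{r+1}/G_r\cong\alpha_{p^{r+1}}$.

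The heart of the matter is the reverse inclusion $G_{r-1}\subseteq[U_n,G_r]$, which I would prove by induction on $r$ (the case $r=1$ being trivial, as $[U_n,G_1]\subseteq G_0$). Granting $[U_n,G_{r-1}]=G_{r-2}$ one has $G_{r-2}=[U_n,G_{r-1}]\subseteq[U_n,G_r]$, so it remains to show that $[U_n,G_r]$ surjects onto the layer $G_{r-1}/G_{r-2}=\alpha_{p^{r-1}}$. The computation to carry out is the commutator of the two monomial points $1+aF\in U_n$ and $1+bF^{n-r+1}\in G_r$ (where $a^{p^n}=0$ and $b^{p^r}=0$): working in $R[F;\sigma]$ modulo the two-sided ideal of elements of $F$-valuation $\ge n-r+3$, a short calculation with \eqref{multiplication} and the recursion \eqref{recursion for inverse} gives
\[
[\,1+aF,\ 1+bF^{\,n-r+1}\,]\ \equiv\ 1+\bigl(ab^{p}-ba^{\,p^{\,n-r+1}}\bigr)F^{\,n-r+2}\pmod{G_{r-2}}.
\]
Thus the image of $[U_n,G_r]$ in $\alpha_{p^{r-1}}=\Spec K[t]/(t^{p^{r-1}})$ is a subgroup scheme containing the scheme-theoretic image of $(a,b)\mapsto ab^{p}-ba^{p^{n-r+1}}$, and since the subgroup schemes of $\alpha_{p^{r-1}}$ form a chain with unique maximal proper member $\alpha_{p^{r-2}}=V(t^{p^{r-2}})$, it suffices to rule out containment in $\alpha_{p^{r-2}}$. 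This is the one genuinely computational point, and it is settled by the identity $(ab^{p}-ba^{p^{n-r+1}})^{p^{r-2}}=a^{p^{r-2}}b^{p^{r-1}}-b^{p^{r-2}}a^{p^{n-1}}$: the two monomials on the right are distinct and both survive in $K[a,b]/(a^{p^n},b^{p^r})$, so the expression is nonzero there. Hence $[U_n,G_r]=G_{r-1}$ for all $r$, and feeding this back into the definition of the lower central series gives $\Gamma^r(U_n)=G_{n-r}$; in particular $U_n$ is nilpotent of class exactly $n$.

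For the upper central series I would argue by induction that $Z_s(U_n)=G_s$. Assuming $Z_s=G_s$, we have $Z_{s+1}/G_s=Z(U_n/G_s)$, which contains $G_{s+1}/G_s$ by the first step, and only the reverse inclusion is at issue. Given a point $x=1+\sum\lambda_iF^i$ of $Z_{s+1}$ one has $[1+bF,x]\in G_s$ over $R[b]/(b^{p^n})$; a direct computation writes $[1+bF,x]=1+\delta\cdot(x(1+bF))^{-1}$ with $\delta=\sum_{i\ge 2}\bigl(b\lambda_{i-1}^{p}-\lambda_{i-1}b^{p^{i-1}}\bigr)F^{i}$, and then an induction on $k=1,\dots,n-s-1$ works: if $\lambda_1=\dots=\lambda_{k-1}=0$, then the coefficient of $F^{k+1}$ in $[1+bF,x]$ equals $b\lambda_k^{p}-\lambda_k b^{p^k}$, which must vanish because $k+1\le n-s$; as $b$ and $b^{p^k}$ are distinct members of the $R$-basis $1,b,\dots,b^{p^n-1}$ of $R[b]/(b^{p^n})$, this forces $\lambda_k=0$. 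Hence $x\in G_{s+1}$, so $Z_{s+1}\subseteq G_{s+1}$ and equality holds.

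The main obstacle is the surjectivity of $[U_n,G_r]$ onto the top layer $G_{r-1}/G_{r-2}$ in the second step: since the coefficient $a$ in $1+aF\in U_n$ is forced to be nilpotent, one cannot specialize it to a unit, and the single-commutator estimate only controls part of $\alpha_{p^{r-1}}$ unless the nilpotency orders of $a$ and $b$ are chosen with care. The clean way around this is precisely the identity $(ab^{p}-ba^{p^{n-r+1}})^{p^{r-2}}=a^{p^{r-2}}b^{p^{r-1}}-b^{p^{r-2}}a^{p^{n-1}}$ combined with the observation that it suffices to exclude containment in the unique maximal proper subgroup scheme of $\alpha_{p^{r-1}}$. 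The remaining care required throughout is to phrase the commutator identities and the surjectivity as assertions about scheme-theoretic images (equivalently, about the universal nilpotent point), rather than merely about $R$-valued points.
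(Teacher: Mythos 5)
Your proposal is correct and follows essentially the same route as the paper: the same commutator formula, the same non-vanishing check of a $p$-power of $ab^{p}-ba^{p^{m}}$ in a truncated polynomial ring combined with the fact that the subgroup schemes of $\alpha_{p^{m}}$ form a chain, and the same coefficient comparison over $R[b]/(b^{p^{n}})$ for the bound $Z_{s+1}\subseteq G_{s+1}$. The only organizational difference is that you obtain the reverse inclusion $G_s\subseteq Z_s$ formally from the containment $[U_n,G_{s+1}]\subseteq G_s$ (via the filtration of Proposition~\ref{normal series}), whereas the paper deduces it from Lemma~\ref{higher commutators unitriangular} on unitriangular groups; both are sound.
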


\begin{proof}
With descending induction one easily checks that $G_r\subset G_{r+1}$ are subgroup schemes: the surjection $G_{r+1}\ra\alpha_{p^{r+1}}$ given by $1+\sum_{i=n-r}^n\lambda_iF^i\mapsto\lambda_{n-r}$ respects the group law and has kernel $G_r$. The isomorphism theorem gives the statement on the quotients.

The arguments for the higher centers rely on the following observation: The recursion formula \eqref{recursion for inverse} for inverses $\sum\gamma_iF^i = (\sum\beta_iF^i)^{-1}$ shows that each coefficient $\gamma_i=\gamma_i(\beta_0,\ldots,\beta_n)$ actually depends only on $\beta_0,\ldots,\beta_i$.  From this one easily infers
\begin{equation}
\label{characterization equivalence}
\left(\sum\alpha_iF^i\right)\cdot \left(\sum\beta_iF^i\right)^{-1}\in G_r(R)\quad\Longleftrightarrow\quad \text{$\alpha_i=\beta_i$ for $0\leq i\leq n-r$}.
\end{equation}
Write $Z_r$ for the higher centers of $U_n$. We show $Z_r\subset G_r$ by induction on $0\leq r\leq n$. The case $r=0$ is trivial.
Now suppose $r\geq 1$ and that the inclusion holds for $r-1$.
For each $x=\sum\lambda_iF^i$ from $U_n(R)$, we compute
$$
(1-\mu F)\cdot x=  x - \sum_{i=0}^{n-1}\mu\lambda_i^pF^{i+1}\quadand x\cdot(1-\mu F) = x- \sum_{i=0}^{n-1}\lambda_i\mu^{p^i} F^{i+1}.
$$
Suppose that $x$ belongs to $Z_r(R)$. Then for all $\mu\in\alpha_{p^n}(R')$ in some ring extension $R\subset R'$, the above two expressions coincide modulo $Z_{r-1}\subset G_{r-1}$.  From the equivalence \eqref{characterization equivalence}, we obtain $\mu\lambda_i^p=\lambda_i\mu^{p^i}$ for $0\leq i\leq n-r$.  For $R'=R[\mu]\,/(\mu^{p^n})$, we are in position to compare coefficients and infer $\lambda_1=\ldots=\lambda_{n-r}=0$, and thus $x\in G_r(R)$.

This completes our induction and establishes $Z_r\subset G_r$ for all $0\leq r\leq n$.  For the reverse inclusion, we use our embedding $U_n\subset \operatorname{UT}_{n+1}$ into the group of unitriangular matrices.  According to Lemma~\ref{higher commutators unitriangular} below, the $\supth{r}$ higher center of $\operatorname{UT}_{n+1}(R)$ is given by the matrices that are zero on the $n-r$ secondary diagonals above the main diagonal.  The intersection with $U_n(R)$ equals $G_r(R)$. Consequently, $G_r\subset Z_r$; thus the $G_r=Z_r$ form the upper central series.

The arguments for the higher commutator groups rely on some preliminary observations.  For elements of the form $b=1-\beta F^s -\gamma F^{s+1} + \cdots$ with any $s\geq 1$, the geometric series $(1-x)^{-1}=1+x+x^2+\cdots$ gives
$$
b^{-1} = \left(1-\beta F^s -\gamma F^{s+1} + \cdots\right) ^{-1}\equiv 1 + \beta F^s + \gamma F^{s+1} + \beta^{1+p^s}F^{2s},
$$
where the congruence means up to terms of order $s+2$. Note that the last summand is only relevant in the special case $s=1$.  With $a=1-\alpha F$, the above formula shows that the commutator $aba^{-1}b^{-1}$ is congruent to
 \begin{gather*}
\left(1-\beta F^s -\gamma F^{s+1}\right)^{-1}  + (1-\alpha F)\left(-\beta F^s -\gamma F^{s+1}\right)(1+\alpha F)\left(1+\beta F^s\right) \equiv \\
1+\beta F^s +\gamma F^{s+1} + \beta^{1+p^s}F^{2s}-\beta F^s +\alpha\beta^pF^{s+1} -\beta\alpha^{p^s}F^{s+1} - \beta^{1+p^s}F^{2s} - \gamma F^{s+1}. 
\end{gather*}
Most summands cancel, and the upshot is the commutator formula 
\begin{equation}
\label{commutator formula}
aba^{-1}b^{-1} = 1+\left(\alpha\beta^p-\beta\alpha^{p^s}\right)F^{s+1} + \cdots.
\end{equation}
Write $\Gamma^r$ for the higher commutator subgroup schemes.  According to general properties of nilpotent groups (\textit{cf.} \cite[p.~107]{Kargapolov; Merzljakov 1979}), we have $\Gamma^r\subset Z_{n-r}=G_{n-r}$.  We claim that the canonical projection
\begin{equation}
\label{commutator projection}
\Gamma^r=\left[U_n,\Gamma^{r-1}\right]   \lra G_{n-r}/G_{n-r-1} = \alpha_{p^{n-r}} 
\end{equation}
is an epimorphism.  We check this by induction on $r\geq 0$. The case $r=0$ is trivial. Suppose $r\geq 1$ and that the assertion is true for $r-1$.  According to \cite[Section~IV.2, Proposition~1.1]{Demazure; Gabriel 1970}, the iterated Frobenius kernels are the only subgroup schemes of $\alpha_{p^{n-r}}\subset\GG_a$.  Seeking a contradiction, we assume that the above map factors over $\alpha_{p^{n-r-1}}$.  Consider the ring $R=K[\alpha,\beta]\,/(\alpha^{p^n},\beta^{p^{n-r+1}})$. By our induction hypothesis, there are some faithfully flat extension $R\subset R'$ and some $R'$-valued point of the form $b=1-\beta F^{r-1} -\gamma F^r + \cdots$ from $\Gamma^{r-1}$.  With $a=1-\alpha F$, the commutator formula \eqref{commutator formula} shows that $aba^{-1}b^{-1}$ projects to $\lambda=\alpha\beta^p-\beta\alpha^{p^{r}}$ under \eqref{commutator projection}. So $\lambda^{p^{n-r-1}}=\alpha^{p^{n-r-1}}\beta^{p^{n-r}}-\beta^{p^{n-r-1}}\alpha^{p^{n-1}} $ vanishes in the ring $R'$. On the other hand, both of the appearing monomials belong to the monomial basis for $R$, giving a contradiction.  Thus \eqref{commutator projection} is an epimorphism.

We are now ready to prove that the inclusion $\Gamma^s\subset G_{n-s}$ is an equality.  Fix some $x\in G_{n-s}(R)$, and write it as $x=1+\sum_{i=s+1}^n\lambda_iF^i$.  We check that $x\in \Gamma^s(R)$ by descending induction on $s\leq n$.  The case $s=n$ is trivial. Now assume $s<n$ and that the assertion holds for $s+1$.  By the preceding paragraph, there are some faithfully flat extension $R\subset R'$ and some $R'$-valued point $y=1+\sum_{i=s+1}^n\mu_iF^i$ from $\Gamma^s$ with $\mu_{s+1}=-\lambda_{s+1}$.  Then $xy=1+\sum_{i=s+2}\lambda'_iF^i$ belongs to $G_{n-s-1}(R')$. Using our induction hypothesis, together with the inclusion $\Gamma^{s+1}\subset\Gamma^s$, we see that $xy$, and hence $x$,
belongs to $\Gamma^s(R')$, and by descent $x\in\Gamma^s(R)$.
\end{proof}

Let us point out that the ring $K[\alpha,\beta]\,/(\alpha^{p^n},\beta^{p^{n-r+1}})$ is not free as a module over $K[\lambda]$, which one sees by analyzing the size of the Jordan blocks for multiplication by $\lambda=\alpha\beta^p-\beta\alpha^{p^r}$.  Thus it is not always possible to factor a given element of $\Gamma^{r+1}(R)$ into commutators, even over flat extensions $R\subset R'$.

In the preceding proof, we have used the following   fact. 

\begin{lemma}
\label{higher commutators unitriangular}
The unitriangular matrix group $\operatorname{UT}_{n+1}(R)$, over any ring $R$, has upper central series given by
\begin{equation}
\label{description higher commutator}
Z_s=\left\{ E+ \left(\zeta_{ij}\right) \mid \text{$\zeta_{ij}=0$ whenever $j-i\leq n-s$} \right\}.
\end{equation}
\end{lemma}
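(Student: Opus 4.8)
\emph{Setup and reduction.} The plan is to work inside the associative ring $T$ of all upper triangular $(n+1)\times(n+1)$-matrices over $R$, with its two-sided ideal $N\subset T$ of strictly upper triangular matrices. Then $N^k$ is the additive subgroup supported on the superdiagonals $j-i\geq k$, so $N^{n+1}=0$ and $\operatorname{UT}_{n+1}(R)=E+N$; write $G=\operatorname{UT}_{n+1}(R)$. For $0\leq s\leq n$ set $W_s=E+N^{n+1-s}$; this is precisely the set on the right-hand side of \eqref{description higher commutator}, with $W_0=\{E\}$ and $W_n=G$ (and for $s\geq n$ both sides equal $G$). It thus suffices to prove $Z_s=W_s$ for $0\leq s\leq n$, and the crux is the identity $Z(G/W_s)=W_{s+1}/W_s$ for $0\leq s\leq n-1$; granting this, an induction on $s$ (base case $Z_0=\{E\}=W_0$; inductive step $Z_{s+1}/Z_s=Z(G/Z_s)=Z(G/W_s)=W_{s+1}/W_s$) finishes.

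\emph{Normality and one inclusion.} First I would note that each $W_s$ is normal in $G$: as $N^{n+1-s}$ is a two-sided ideal of $T$ consisting of nilpotent matrices, $E+N^{n+1-s}$ is stable under multiplication, under inversion via the finite geometric series $(E+a)^{-1}=\sum_{j\geq 0}(-a)^{j}$, and under conjugation by $G\subset T^{\times}$. For the inclusion $W_{s+1}/W_s\subset Z(G/W_s)$, equivalently $[G,W_{s+1}]\subset W_s$, I would use the identity
\[
[g,h]=ghg^{-1}h^{-1}=E+(ca-ac)\,g^{-1}h^{-1}\qquad(g=E+c,\ h=E+a,\ c,a\in N),
\]
which follows from $[g,h]-E=(ghg^{-1}-h)h^{-1}$ and $ghg^{-1}-h=(gh-hg)g^{-1}$. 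If $h\in W_{s+1}$, i.e.\ $a\in N^{n-s}$, then $ca-ac\in N^{n+1-s}$, and since $N^{n+1-s}T\subset N^{n+1-s}$ we get $[g,h]\in W_s$.

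\emph{The reverse inclusion.} Conversely, let $h=E+a\in G$ with $h\notin W_{s+1}$, so $a\in N^k\setminus N^{k+1}$ for some $1\leq k\leq n-s-1$; fix a position $(i,i+k)$ with $a_{i,i+k}\neq 0$. I would produce an elementary matrix $c$ with $[E+c,h]\notin W_s$: take $c=E_{i+k,\,i+k+1}$ when $i+k+1\leq n+1$, and $c=E_{i-1,\,i}$ otherwise (then $i+k=n+1$, which forces $i\geq 2$). Using that $E_{p,q}a$ has only row $p$ nonzero, equal to row $q$ of $a$, and $aE_{p,q}$ has only column $q$ nonzero, equal to column $p$ of $a$, a direct check shows that in both cases $ca-ac\in N^{k+1}$ and $ca-ac$ carries the entry $\pm a_{i,i+k}\neq 0$ on the superdiagonal $k+1$. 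Right-multiplication by $g^{-1}h^{-1}\in E+N$ alters $ca-ac$ only modulo $N^{k+2}$, hence leaves that entry in place; since $k+1\leq n-s$, the matrix $[E+c,h]-E$ then has a nonzero entry on a superdiagonal $<n+1-s$, so it is not in $N^{n+1-s}$, i.e.\ $[E+c,h]\notin W_s$ and $hW_s\notin Z(G/W_s)$. This yields $Z(G/W_s)\subset W_{s+1}/W_s$ and completes the proof.

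\emph{Main obstacle.} The normality of the $W_s$ and the commutator identity are routine; the only delicate point is the construction in the third step, where the placement of the lowest nonzero entry of $a$ near the last column (or first row) forces the case distinction in the choice of $c$, and one must verify that the single ``escaping'' entry $\pm a_{i,i+k}$ genuinely survives multiplication by the unipotent factor $g^{-1}h^{-1}$.
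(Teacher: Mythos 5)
Your proof is correct and follows essentially the same route as the paper's: an induction on $s$ identifying $Z(G/Z_s)$ with the claimed subgroup by an entrywise analysis of the commutation condition. Your repackaging via the powers $N^k$ of the strictly upper triangular ideal, together with the explicit elementary-matrix witnesses for the reverse inclusion, simply makes precise the steps the paper leaves as ``one easily infers.''
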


\begin{proof}
Over fields, this appears in \cite[Example~16.1.2]{Kargapolov; Merzljakov 1979}. The general case is formulated in \cite{Robinson 1993} as Exercise 5.1.13.  For the sake of completeness, we sketch an argument, by induction on $s\geq 0$.  The case $s=0$ is trivial. Now suppose $s\geq 1$ and that the assertion is true for $s-1$. By definition, a unitriangular $E+(\alpha_{ij})$ belongs to $Z_s$ if and only if
\begin{equation}
\label{higher commutator condition}
\left(E+\left(\alpha_{ij}\right)\right) \cdot \left(E+\left(\beta_{ij}\right)\right)  \equiv \left(E+\left(\beta_{ij}\right)\right) \cdot \left(E+\left(\alpha_{ij}\right)\right) \quad \text{modulo $Z_{s-1}$}
\end{equation}
for every unitriangular matrix $E+(\beta_{ij})$. By the induction hypothesis, each $E+(\zeta_{ij})\in Z_{s-1}$ has $\zeta_{ij}=0$ for $j-i\leq n+1-s$, and one easily checks that right multiplication with elements of $Z_{s-1}$ to a unitriangular matrix leaves the $(i,k)$-entries unchanged for $k-i\leq n+1-s$.  From this the inclusion $\supset$ of \eqref{description higher commutator} easily follows.  Conversely, suppose that we have some $E+(\alpha_{ij})\in Z_s$, so \eqref{higher commutator condition} holds.  This means
$$
\sum_j\alpha_{ij}\beta_{jk} = \sum_j \beta_{ij}\alpha_{jk}\quad\text{whenever $k-i\leq n+1-s$,}
$$
where $E+(\beta_{ij})$ is any unitriangular matrix and the sums actually run over $i<j<k$.  From this one easily infers by induction on $r=j-i$ that $\alpha_{ij}$ vanishes for $1\leq r\leq n-s$.
\end{proof}

\section{Compactifications and numerical semigroups}
\label{Compactifications}

We keep the setting of the previous section but now work with a new indeterminate $T=x^{-1}$.  The iterated semidirect product $\GG_a\rtimes U_n\rtimes \GG_m$ has as coordinate ring
$$
\Gamma\left(\O_{\GG_a\rtimes U_n\rtimes \GG_m}\right)=
K\left[\alpha,\lambda_1,\ldots,\lambda_n,\lambda_0^{\pm}\right]/\left(\lambda_1^{p^n},\lambda_2^{p^{n-1}},\ldots,\lambda_{n-1}^{p^2},\lambda_n^p\right),
$$
endowed with a Hopf algebra structure, and acts on the affine line $\AA^1=\Spec K[T^{-1}]$. We now seek to extend this action to certain compactifications, all of which are denormalizations of the projective line $\PP^1=\Spec K[T]\cup\Spec K[T^{-1}]$. For this, we have to make extensive computations in the first chart, which are much easier to carry out with $T$ rather than $x^{-1}$.  Note that by Proposition~\ref{extension to fractions} we have an induced action on the spectrum of the function field $K(T)=K(x)$, and this action takes the form
\begin{equation}
\label{action via hopf}
K(T)\lra \Gamma\left(\O_{\GG_a\rtimes U_n\rtimes \GG_m}\right)\otimes K(T),\quad T\longmapsto \left(\alpha+\sum_{i=0}^n\lambda_iT^{-p^i}\right)^{-1}.
\end{equation}
 
Recall that an additive submonoid $\Gamma\subset\NN$ whose complement is finite is called a \emph{numerical semigroup}.  Equivalently, the induced inclusions of groups $\Gamma^{\grp}\subset\NN^{\grp}=\ZZ$ is an equality, or $\gcd(a_1,\ldots,a_r)=1$ for some members $a_1,\ldots,a_r\in \Gamma$.  Each numerical semigroup comes with the following invariants: The \emph{multiplicity} $e\geq 1$ is the smallest non-zero element in $\Gamma$.  The \emph{conductor} is the smallest integer $c\geq 0$ with $\{c,c+1,\ldots\}\subset\Gamma$.  The \emph{genus} $g\geq 0$ is the cardinality of the complement $\Gamma\smallsetminus\NN$, whose members are called \emph{gaps}.  As monoid, $\Gamma$ is finitely generated, and among all systems of generators, there is a smallest one; its cardinality is called the \emph{embedding dimension} $d\geq 1$.  For general overviews, we refer to the textbooks \cite{Rosales; Garcia-Sanchez 2009} and \cite{Assi; Garcia-Sanchez 2016}.

For each numerical semigroup $\Gamma$, the ring $K[T^\Gamma]=K[T^a\mid a\in \Gamma]$ defines a \emph{compactification}
$$
X=\Spec K\left[T^\Gamma\right] \cup \Spec K\left[T^{-1}\right]
$$
of the affine line $\AA^1=\Spec K[T^{-1}]$, obtained by adding a single rational point $x_0\in X$. The gluing of the two affine open sets is given by the common localization $K[T^{\pm 1}]$ of the coordinate rings.  The normalization is $\PP^1=\Spec K[T]\cup\Spec K[T^{-1}]$, and the ensuing map $f\colon \PP^1\ra X$ is described by the \emph{conductor square}
\begin{equation}
\label{conductor square}
\begin{CD}
A	@>>>	\PP^1\\
@VVV		@VVfV\\
B	@>>>	X,
\end{CD}
\end{equation}
which is both cartesian and cocartesian (for details see \cite[Appendix~A]{Fanelli; Schroeer 2020}).  The conductor loci $A\subset\PP^1$ and $B\subset X$ are the closed subschemes whose respective coordinate rings are $ K[T]\,/(T^c)$ and $ K[T^\Gamma]\,/(T^c,T^{c+1},\ldots)$.  Consider the short exact sequence $0\ra \O_X\ra f_*(\O_{\PP^1})\times\O_B\ra f_*(\O_A)\ra 0$ of sheaves on $X$, where the inclusion is the diagonal map and the surjection is the difference map. It yields
\begin{equation}
\label{invariants curve}
h^0\left(\O_X\right)=1,\quad  h^1\left(\O_X\right)=g\quadand e\left(\O_{X,x_0}\right)=e,\quad \edim\left(\O_{X,x_0}\right)=d,
\end{equation}
with the invariants $c,g,e,d$ of the numerical semigroup discussed above. Here $e(\O_{X,x_0})$ and $\edim(\O_{X,x_0})$ denote the \emph{multiplicity} and the \emph{embedding dimension} of the local ring, respectively.

Given a subgroup scheme $G\subset\GG_a\rtimes U_n\rtimes \GG_m$, it is natural to ask whether the resulting $G$-action on the affine line $\AA^1$ extends to the compactification $X$.  If it exists, such an extension is unique because the open set $\AA^1\otimes R$ is schematically dense in $X\otimes R$ for any ring $R$.

In the following assertion on the constituents of the iterated semidirect product, we regard the expression $P=(1+\sum_{i=1}^n \lambda_iT^{1-p^i})^{-d}$ as a Laurent polynomial in the indeterminate $T$ with coefficients from $\FF_p[\lambda_1,\ldots,\lambda_n]\,/(\lambda_1^{p^n},\lambda_2^{p^{n-1}},\ldots,\lambda_n^p)$, and $Q=(1+\alpha T)^{-d}$ as a formal power series in $T$ with coefficients from $\FF_p[\alpha]$.  In both cases we use the ensuing notion of supports $\Supp(P)$ and $\Supp(Q)$ inside the group of exponents $\ZZ$.

 
\begin{proposition}
\label{criteria for extensions}
We keep the notation as above. Then the following hold:
\begin{enumerate} 
\item\label{ce-1} 
The   multiplicative group $G=\GG_m$ always admits an extension.
\item\label{ce-2} For the infinitesimal group scheme $G=U_n$, the extension exists if and only if for each $d\in \Gamma$ and $s\in \Supp(P)$ for the Laurent polynomial $P=(1+\sum_{i=1}^n \lambda_iT^{1-p^i})^{-d}$, we also have $d+s\in \Gamma$.\item\label{ce-3} For the additive group $G=\GG_a$, the extension exists if and only if for each $d\in \Gamma$ and $s\in\Supp(Q)$ for the formal power series $Q=(1+\alpha T)^{-d}$, we have $d+s\in\Gamma$.
\end{enumerate} 
Moreover, it suffices to verify these conditions for a set of generators $d\in\Gamma$.
\end{proposition}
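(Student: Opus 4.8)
The plan is to reduce ``the $G$-action on $\AA^1$ extends to $X$'' to a regularity condition for the comorphism at the single point $x_0=X\setminus\AA^1$, and then to read that condition off from $\Gamma$. First, any $G$-action on $X$ extending the given one on $\AA^1$ is built from automorphisms of $X$ restricting to the prescribed automorphisms of $\AA^1$; these preserve the open $\AA^1$, hence fix $x_0$, so extending the action is the same as extending the comorphism over $x_0$. Here $\O_{X,x_0}$ is the localization of $K[T^\Gamma]$ at $\mathfrak{m}=(T^a\mid a\in\Gamma,\ a>0)$; under the normalization $f\colon\PP^1\to X$ its normalization is $\O_{\PP^1,f^{-1}(x_0)}=K[T]_{(T)}$, a discrete valuation ring with uniformizer $T$, and the conductor of this extension is $T^cK[T]_{(T)}$ with $c$ the conductor of $\Gamma$. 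For $g\in G(R)$, Proposition~\ref{extension to fractions} gives an induced automorphism $g^\ast$ of $R\otimes_KK(T)$, and~\eqref{action via hopf} shows that for $d\in\Gamma$ the element $g^\ast(T^d)$ equals $\lambda_0^{-d}T^d$ when $G=\GG_m$, equals $T^dP$ with $P=(1+\sum_{i=1}^n\lambda_iT^{1-p^i})^{-d}$ when $G=U_n$, and equals $T^dQ$ with $Q=(1+\alpha T)^{-d}$ when $G=\GG_a$. Since the $\lambda_i$ are nilpotent, $P$ is a Laurent polynomial with $\Supp(P)\subseteq\ZZ_{\le0}$, whereas $Q$ is a formal power series with $\Supp(Q)\subseteq\ZZ_{\ge0}$ (the supports not depending on whether one works over $\FF_p$ or $K$); hence $\Supp(g^\ast(T^d))$ is $\{d\}$, resp.\ $d+\Supp(P)$, resp.\ $d+\Supp(Q)$. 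It suffices to treat the universal point, $g=\id$ over the coordinate ring $R=\Gamma(\O_G)$, since supports can only shrink under specialization.

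The crux is a semigroup lemma: for a $K$-algebra $A$, and $f$ lying in $A[T^{\pm1}]$, or in $A[[T]]$, or (for the non-finite group $\GG_a$) in the local ring of $G\times\Spec K[T^\Gamma]$ along the generic point of $G\times\{x_0\}$, one has that $f$ is regular at $x_0$, i.e.\ $f\in A\otimes_K\O_{X,x_0}$, if and only if $\Supp(f)\subseteq\Gamma$; in particular $(A\otimes_K\O_{X,x_0})\cap A[T^{\pm1}]=A\otimes_KK[T^\Gamma]$ inside $A\otimes_KK(T)$. I would prove this by the ``smallest bad exponent'' argument: write $f=h/s$ with $h,s\in A\otimes_KK[T^\Gamma]$ and $s$ of unit constant term, and suppose $m$ is minimal with $m\in\Supp(f)$, $m\notin\Gamma$; then in the identity $sf=h$, every contribution to the coefficient of $T^m$ other than $s_0f_m$ arises from a pair of exponents both in $\Gamma$, hence summing into $\Gamma$, so $s_0f_m=0$, a contradiction. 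The inclusion $T^cK[T]_{(T)}\subseteq\O_{X,x_0}$ supplies the reverse implication and the power-series version. This lemma, together with the bookkeeping of which binomial coefficients vanish modulo $p$, carries essentially all the content.

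It then remains to assemble the three cases. For $G=\GG_m$, $g^\ast(T^d)\in R\otimes_KK[T^\Gamma]$ for all $d\in\Gamma$, so $g^\ast$ preserves the coordinate ring $R\otimes_KK[T^\Gamma]$ of $\GG_m\times\Spec K[T^\Gamma]$, yielding a morphism that glues to the given $\GG_m\times\AA^1\to\AA^1$ along $\GG_m\times\Spec K[T^{\pm1}]$; the glued morphism is an action because it agrees with one on the schematically dense open $\GG_m\times\AA^1$ and $X$ is separated, which proves~\ref{ce-1}. For $G=U_n$, by the lemma the Laurent polynomial $g^\ast(T^d)$ is regular at $x_0$ (equivalently lies in $R\otimes_KK[T^\Gamma]$) precisely when $d+\Supp(P)\subseteq\Gamma$; when this holds for every $d$ the same gluing furnishes the extension, and conversely an extension fixes $x_0$, so restricting near $x_0$ forces $g^\ast(T^d)$ to be regular there, i.e.\ $d+\Supp(P)\subseteq\Gamma$ --- this is~\ref{ce-2}. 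For $G=\GG_a$, the translation action already extends to $\PP^1$ fixing the point at infinity, so only the preservation of $\O_{X,x_0}\otimes R$ is in question; since $g^\ast(T^d)=T^dQ$ is a power series, this amounts to $d+\Supp(Q)\subseteq\Gamma$, which by the lemma is the same as regularity at the generic point of the section $\GG_a\times\{x_0\}$ and then propagates to every point of that section, so the rational map $\GG_a\times\Spec K[T^\Gamma]\dashrightarrow X$ is a morphism --- this is~\ref{ce-3}. In all cases ``it suffices to verify a generating set'' is immediate: $g^\ast$ is a ring homomorphism and $K[T^\Gamma]$ is generated over $K$ by $\{T^d\mid d\in\mathcal G\}$ for any monoid generating set $\mathcal G$ of $\Gamma$. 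The point I expect to demand the most care is reconciling the scheme-theoretic statement ``extends to $X$'' with the bare comorphism condition simultaneously for the infinitesimal $U_n$ (which does not act on $\PP^1$, but whose comorphism is a Laurent polynomial) and for $\GG_a$ (which does act on $\PP^1$, but whose comorphism is a genuine power series, forcing the argument at the generic point of $\GG_a\times\{x_0\}$).
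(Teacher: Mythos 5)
Your handling of parts~(\ref{ce-1}) and~(\ref{ce-2}), the computation of the supports of $P$ and $Q$, and the reduction to a generating set of $\Gamma$ all match the paper's proof: for $\GG_m$ the comorphism is monomial, and for the infinitesimal $U_n$ every open subscheme is automatically stable, so the question is exactly whether the Laurent polynomial $T^dP$ lies in $\Gamma(\O_{U_n})\otimes K[T^\Gamma]$, i.e.\ whether $d+\Supp(P)\subseteq\Gamma$.

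The $\GG_a$ case, however, has a genuine gap. You reduce to the condition ``$g^\ast(T^d)=T^dQ$ lies in $R\otimes_K\O_{X,x_0}$'' with $R=K[\alpha]$, and your semigroup lemma asserts that for a power series this is equivalent to $\Supp(T^dQ)\subseteq\Gamma$. Both claims fail. Already for $\Gamma=\NN$ and $X=\PP^1$ the translation action extends, yet $T(1+\alpha T)^{-1}=T\sum_j(-\alpha T)^j$ does \emph{not} lie in $K[\alpha]\otimes_K\O_{\PP^1,0}=\O_{\PP^1,0}[\alpha]$: in this polynomial ring over a domain, $1+\alpha T$ has positive degree in $\alpha$, hence is not a unit. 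This is precisely the phenomenon the paper flags --- the fixed point $0\in\PP^1$ admits no $\GG_a$-stable affine neighbourhood --- and it means $R\otimes_K\O_{X,x_0}$ is the wrong ring; the correct condition is regularity on some open neighbourhood of the whole section $\GG_a\times\{x_0\}$, which is a strictly larger localization of $R\otimes K[T^\Gamma]$. The ``if'' direction of your lemma is likewise false for general $f\in A[[T]]$ (take $f=\sum_k T^{k!}$ with $\Gamma=\NN$: its support lies in $\Gamma$ but $f\notin K[T]_{(T)}$), and the conductor inclusion $T^cK[T]_{(T)}\subseteq\O_{X,x_0}$ does not rescue it, since $T^cA[[T]]\not\subseteq A\otimes_K T^cK[T]_{(T)}$; finally, the asserted ``propagation from the generic point to every point of the section'' is never justified. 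These defects are repairable --- for instance, $(1+\alpha T)^{p^N}=1+\alpha^{p^N}T^{p^N}$ lies in $R\otimes K[T^\Gamma]$ for $N\geq n$ and is invertible on an open neighbourhood of the section, so under the support hypothesis $T^dQ$ is regular there and one can glue with the chart $\GG_a\times\Spec K[T^{\pm1}]$ --- but as written the argument does not go through. The paper avoids all of this by first extending the smooth $\GG_a$-action to the normalization $\PP^1$ (Brion) and then invoking Laurent's descent criterion through the conductor square, which reduces part~(\ref{ce-3}) to a computation in the Artinian quotient $K[T]/(T^c)$, where $Q$ truncates to a polynomial and no localization issues arise.
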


\begin{proof}
\eqref{ce-1}~  Recall that  $\GG_m$-actions on affine schemes correspond to $\ZZ$-gradings, according to \cite[Expos\'e I, Corollary 4.7.3.1]{SGA 3a}.
The action on $\Spec K[T^{-1}]$ is given by    $\deg(T^{-i})=-i$. 
This also defines compatible gradings on $K[T^\Gamma]$, which yields the desired extension of the action of $G=\GG_m$.

\eqref{ce-2}~ The group scheme $G=U_n$ is infinitesimal; hence every open set on a $G$-scheme is $G$-stable.  It follows that the $G$-action extends if and only if the map $K[T^\Gamma]\ra \Gamma(\O_G)\otimes K(T)$ induced from \eqref{action via hopf} factors over the subring $\Gamma(\O_G)\otimes K[T^\Gamma]$.  This map sends $T^{-1}$ to $T^{-1} + \sum \lambda_iT^{-p^i}=T^{-1}(1+\sum \lambda_iT^{1-p^i})$.  Note that the second factor is invertible because its second summand is nilpotent.  The monomial $T^d$ with $d\in \Gamma$ is mapped to $T^dP(T)$. This belongs to the subring $R[T^\Gamma]$ if and only if for each $s\in\Supp(P)$, the resulting integer $d+s$ belongs to the numerical semigroup $\Gamma$.

\eqref{ce-3}~ The action of $G=\GG_a$ on $\AA^1=\Spec K[T^{-1}]$ extends to the projective line $\PP^1=\Proj K[U_0,U_1]$ via the assignments $U_1\mapsto U_1$ and $U_0\mapsto U_0+\alpha U_1$, with $T=U_1/U_0$.  Note that the origin $0\in\PP^1$ is fixed but does not admit a stable affine open neighborhood. However, the infinitesimal neighborhoods and in particular the conductor locus $A\subset\PP^1$ are stable.

Since $G$ is smooth, any $G$-action on $\AA^1$ uniquely extends to $\PP^1$, according to \cite[Theorem~2]{Brion 2022a}. By \cite[Lemma~3.5]{Laurent 2019} the $G$-action on the projective line descends to an action on $X$ if and only if the action on the conductor locus $A$ descends to an action on $B$. The latter simply means that the map
\begin{equation}
\label{map to factor}
\Gamma(B,\O_B)\lra \Gamma(A,\O_A)\lra K[\alpha]\otimes\Gamma(A,\O_A)
\end{equation}
factors over $K[\alpha]\otimes\Gamma(B,\O_B)$. Here the map on the right describes the $G$-action on $A$, and the coordinate ring on the left is $\Gamma(B,\O_B)=K[T^\Gamma]\,/(T^c,T^{c+1},\ldots)$, where $c\geq 0$ is the conductor of the numerical semigroup.  As $K$-vector space, this is generated by the residue classes of $T^d$, $d\in \Gamma$.  The map \eqref{action via hopf} sends $T^{-1}$ to $ T^{-1}+\alpha=T^{-1}(1+\alpha T)$, so the monomial $T^d$ is mapped to $T^dQ(T)$.  The class of the latter belongs to $K[T^\Gamma]\,/(T^c,T^{c+1},\ldots)$ if and only if for all $s\in\Supp(Q)$, we have $d+s\in \Gamma$.
\end{proof}
 
Note that in the expansions of $P(T)$ and $Q(T)$, some multinomial coefficients appear, and the above conditions involve their congruence properties modulo the prime number $p$.  Also note that one may view $X$ as a \emph{non-normal torus embedding} with respect to the one-dimensional torus $\GG_m=\Spec K[T^{\pm 1}]$.

The passage from the constituents to the semidirect product is immediate, thanks to the following observation.

\begin{lemma}
\label{semidirect extensions}
Suppose that for each constituent of the iterated semidirect product $G=\GG_a\rtimes U_n\rtimes\GG_m$, the action on $\AA^1$ extends to $X$. Then the  whole $G$-action extends to $X$.
\end{lemma}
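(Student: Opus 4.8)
The plan is to use the reconstruction principle that, for a $K$-scheme $Y$, giving an action of the iterated semidirect product $G=\GG_a\rtimes U_n\rtimes\GG_m$ on $Y$ amounts to giving actions of the three constituents $\GG_a$, $U_n$, $\GG_m$ on $Y$ for which the three conjugation relations of Proposition~\ref{normalization and intersection} hold inside $\Aut_{Y/K}$, namely that $U_n$ and $\GG_m$ normalize $\GG_a$ and that $\GG_m$ normalizes $U_n$. Granting this (a routine extension of the case of a single semidirect product $N\rtimes H$), the hypothesis of the lemma already provides the three constituent actions on $X$, each extending the corresponding action on $\AA^1$, so everything comes down to verifying the three normalization relations on $X$.

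Each such relation is an equality of two morphisms $\mathcal{H}\times X\to X$, where $\mathcal{H}$ is the pertinent product of constituents, one of $U_n\times\GG_a$, $\GG_m\times\GG_a$, $\GG_m\times U_n$. For instance, "$U_n$ normalizes $\GG_a$" is the identity $u\cdot(n\cdot x)=({}^{u}n)\cdot(u\cdot x)$ of morphisms $U_n\times\GG_a\times X\to X$, where $(u,n)\mapsto{}^{u}n$ is the conjugation morphism $U_n\times\GG_a\to\GG_a$ built into $G$. First I would note that each of these identities already holds after restriction along the open immersion $\AA^1\hookrightarrow X$: by construction the constituent actions restrict on $\AA^1$ to the given ones, and those satisfy the three relations precisely because together they constitute the $G$-action on $\AA^1$ from Section~\ref{Scheme-theoretic reinterpretation}.

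Next I would promote this from $\AA^1$ to $X$ by schematic density. Writing the affine scheme $\mathcal{H}$ as $\Spec R$ with $R=\Gamma(\O_{\mathcal{H}})$, the open subscheme $\mathcal{H}\times\AA^1=\AA^1\otimes R$ is schematically dense in $\mathcal{H}\times X=X\otimes R$; this is the density statement recorded just before Proposition~\ref{criteria for extensions}, valid over an arbitrary $K$-algebra $R$, in particular over the non-reduced coordinate rings occurring here because $U_n$ is infinitesimal. Since $X$ is a projective, hence separated, $K$-scheme, two morphisms $\mathcal{H}\times X\to X$ that agree on a schematically dense open subscheme must coincide, their equalizer being a closed subscheme that contains that subscheme, hence equal to $\mathcal{H}\times X$. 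Thus the three normalization relations hold on $X$, and assembling the three constituent actions by the reconstruction principle yields a $G$-action on $X$; it extends the $G$-action on $\AA^1$ because both are assembled from the constituents by the same recipe and the constituent actions restrict correctly.

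I expect the $\GG_a$-constituent to be the crux, and the reason a proof is needed at all. Unlike $U_n$, which is infinitesimal so that every open subscheme of any $U_n$-scheme is stable, and unlike $\GG_m$, for which the affine chart $\Spec K[T^\Gamma]$ is a graded subring and hence $\GG_m$-stable, the extended $\GG_a$-action on $X$ does not preserve the chart $\Spec K[T^\Gamma]$; only the infinitesimal neighborhoods of the added point are stable, exactly as in the proof of Proposition~\ref{criteria for extensions} for $G=\GG_a$. So one cannot assemble the $G$-coaction on that chart directly out of the constituent coactions, as one can for $U_n\rtimes\GG_m$ alone, and it is precisely the schematic-density argument above that bypasses the charts and works directly with morphisms into the proper scheme $X$.
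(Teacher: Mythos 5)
Your proof is correct and is essentially the paper's own argument: the paper disposes of this lemma in one sentence by observing that all relations imposed by the semidirect-product structure hold on $\AA^1$ and therefore on $X$, since $\AA^1\otimes R$ is schematically dense in $X\otimes R$. Your writeup simply makes explicit the details the paper leaves implicit (reassembling the $G$-action from the constituents, the equalizer/separatedness argument, and the remark that the $\GG_a$-action does not preserve the affine chart), all of which are accurate.
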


\begin{proof}
This is a general fact: all relations between the $R$-valued points of the constituents stemming from the semidirect product structures hold on $\AA^1$ and thus also on $X$ because the former is schematically dense in the latter.
\end{proof}

We now introduce  a particular $\Gamma$ that is generated by $n+1$ numbers. 

\begin{definition}
\label{our semigroup}
We write $\Gamma_{p,n}\subset\NN$ for the numerical semigroup generated by
\begin{equation}
\label{generators semigroup}
p^n\quadand p^n-p^j\quad (0\leq j\leq n-1).
\end{equation}
\end{definition}

This is indeed a numerical semigroup because $\gcd(p^n, p^n-p^0)=1$.  Its multiplicity is given by
$$
e_{p,n}=\begin{cases}
p^{n-1}(p-1)	& \text{if $p^n\geq 3$},\\
1		& \text{else}\\
\end{cases}
$$
because in the first case, the number $p^{n-1}(p-1)$ is smallest among the generators.  Note that $e_{p,n}=1$ is equivalent to $ p^n\leq 2$, whereas $e_{p,n}=2$ means $3\leq p^n\leq 4$.

We came up with the above generators by determining for a handful of special cases the largest numerical semigroup for which the group scheme action extends and then guessing the general pattern. The computations were made with the computer algebra systems Magma \cite{Magma} and GAP \cite{GAP}.  One of the main insights of this paper is that the resulting compactifications
$$
X_{p,n}=\Spec K\left[T^{\Gamma_{p,n}}\right]\cup\Spec K\left[T^{-1}\right]
$$
lead to the desired generalizations of the quasielliptic curves.  Indeed, in the special cases $3\leq p^n\leq 4$, we get $\Gamma_{p,n}=\langle 2,3\rangle$, and the ensuing coordinate rings become $K[T^2,T^3]$. We now verify that the action of the iterated semidirect product extends to this compactification.

\begin{theorem}
\label{action extends}
The action of the group scheme $\GG_a\rtimes U_{p,n}\rtimes\GG_m$ on the affine line $\AA^1=\Spec K[T^{-1}]$ extends to the compactification $X=X_{p,n}$.
\end{theorem}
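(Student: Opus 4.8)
The plan is to invoke Lemma~\ref{semidirect extensions}, which reduces the assertion to checking separately, for each of the three constituents $\GG_m$, $U_n$ and $\GG_a$, that its action on $\AA^1$ extends to $X_{p,n}$. The case $G=\GG_m$ is exactly Proposition~\ref{criteria for extensions}\,\eqref{ce-1}. For the other two I would apply the numerical criteria \eqref{ce-2} and \eqref{ce-3}, and by the last sentence of that proposition it is enough to test them on the generating set $p^n,\,p^n-p^0,\ldots,p^n-p^{n-1}$ of $\Gamma_{p,n}$.

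The arithmetic input is Kummer's theorem: $\binom{-d}{k}=(-1)^k\binom{d+k-1}{k}$ is a unit in $\FF_p$ precisely when adding $k$ and $d-1$ in base $p$ produces no carry. Inspecting the base-$p$ digits of $p^n-1$ (all equal to $p-1$ in positions $0,\dots,n-1$) and of $p^n-p^j-1$ (all equal to $p-1$ except a single $p-2$ in position $j$), one reads off that $\binom{-p^n}{k}\not\equiv 0\pmod p$ iff $p^n\mid k$, and $\binom{-(p^n-p^j)}{k}\not\equiv 0\pmod p$ iff $k\bmod p^n\in\{0,p^j\}$. For $G=\GG_a$ we have $Q=(1+\alpha T)^{-d}=\sum_k\binom{-d}{k}\alpha^kT^k$, so $\Supp(Q)$ is the set of $k$ for which $\binom{-d}{k}$ is a unit. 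Hence, for $d=p^n$ every element of $d+\Supp(Q)$ is a positive multiple of $p^n$, and for $d=p^n-p^j$ every element of $d+\Supp(Q)$ equals $qp^n+(p^n-p^j)$ or $(q+1)p^n$ for some $q\geq 0$; in all cases it lies in $\Gamma_{p,n}$, which is \eqref{ce-3}.

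For $G=U_n$ the Laurent polynomial $P=(1+W)^{-d}$, with $W=\sum_{i=1}^n\lambda_iT^{1-p^i}$, looks at first like an infinite sum, but the decisive observation is that $W^{p^n}=\sum_{i=1}^n\lambda_i^{p^n}T^{(1-p^i)p^n}=0$, since $\lambda_i^{p^{n-i+1}}=0$ and $p^{n-i+1}\leq p^n$ for all $i\geq 1$. Together with the divisibility statement above, the only surviving terms of $(1+W)^{-d}=\sum_k\binom{-d}{k}W^k$ are then $k=0$ when $d=p^n$, so that $P=1$, and $k\in\{0,p^j\}$ when $d=p^n-p^j$, so that $P=1+\binom{-d}{p^j}W^{p^j}$ with $W^{p^j}=\sum_{i=1}^{n-j}\lambda_i^{p^j}T^{p^j-p^{i+j}}$ (using once more that $\lambda_i^{p^j}=0$ precisely for $i>n-j$). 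Consequently $d+\Supp(P)\subseteq\{p^n\}$ in the first case and $d+\Supp(P)\subseteq\{p^n-p^j\}\cup\{p^n-p^{i+j}\mid 1\leq i\leq n-j\}$ in the second; and each $p^n-p^{i+j}$ with $j<i+j\leq n$ is either a listed generator of $\Gamma_{p,n}$ or, for $i+j=n$, equals $0$. This establishes \eqref{ce-2} and completes the argument.

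The one genuinely essential point is the identity $W^{p^n}=0$: it is what turns the a priori infinite compatibility condition for the infinitesimal group scheme $U_n$ into a finite one, and the exponents $p^n-p^{i+j}$ that come out are exactly the generators \eqref{generators semigroup} of $\Gamma_{p,n}$ --- which is, in hindsight, how the semigroup was found. Everything else --- the reduction via Lemma~\ref{semidirect extensions}, the $\GG_a$-case, and the base-$p$ carry bookkeeping in Kummer's theorem --- is routine.
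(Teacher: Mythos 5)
Your argument is correct and follows the paper's proof in structure: the same reduction via Lemma~\ref{semidirect extensions} and Proposition~\ref{criteria for extensions}, with the conditions checked on the generators $p^n$ and $p^n-p^j$ of $\Gamma_{p,n}$. The only difference is computational: where you expand $(1+W)^{-d}$ as a binomial series and invoke Kummer's carry criterion together with the nilpotency $W^{p^n}=0$, the paper obtains the same supports more directly from the Frobenius factorizations $(1+W)^{-d}=(1+W)^{-p^n}(1+W)^{p^j}=1\cdot\bigl(1+W^{p^j}\bigr)$ and $(1+\alpha T)^{-d}=\bigl(1+\alpha^{p^j}T^{p^j}\bigr)\sum_{i\geq 0}(-\alpha T)^{ip^n}$ --- both routes are valid and yield identical answers.
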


\begin{proof}
It suffices to extend the action for the three constituents of the iterated semidirect product, by Lemma~\ref{semidirect extensions}, and for this we use Proposition~\ref{criteria for extensions}: The case $G=\GG_m$ is immediate.  Now suppose $G=U_n$, and fix one of the generators $d\in\Gamma_{p,n}$ listed in \eqref{generators semigroup}. We have to understand the expression
$$
P=\left(1+\sum_{i=1}^n \lambda_iT^{1-p^i}\right)^{-d}.
$$
In the case $d=p^n$, the above simplifies to $P=1^{-1}=1$, by the multinomial theorem and $\lambda_i^{p^n}=0$. Thus $\Supp(P)=\{0\}$, and obviously $d+0\in\Gamma_{p,n}$.  In the case $d=p^n-p^j$ with $0\leq j\leq n-1$, we get
$$
P=\left(1+\sum_{i=1}^n \lambda_iT^{1-p^i}\right)^{-p^n}\left(1+\sum_{i=1}^n \lambda_iT^{1-p^i}\right)^{p^j}=1+\sum_{i=1}^n\lambda_i^{p^j} T^{p^j-p^{i+j}}.
$$
Its support equals the set $\{0\}\cup\{p^j-p^{i+j}\mid 1\leq i\leq n-j\}$, in light of the defining relations $\lambda_i^{p^{n-i+1}}=0$.  Obviously, $d+0=p^n-p^j$ and $d+(p^j-p^{i+j})=p^n-p^{i+j}$ belong to $\Gamma_{p,n}$.  Thus the action of $G=U_n$ extends.

It remains to treat the case $G=\GG_a$. Again we fix one of the generators $d\in\Gamma_{p,n}$ and now have to examine the formal power series $Q=(1+\alpha T)^{-d} $ with coefficients from the polynomial ring $\FF_p[\alpha]$. For $d=p^n-p^j$, this becomes
$$
Q=(1+\alpha T)^{p^j}/(1+\alpha T)^{p^n} = \left(1+\alpha^{p^j}T^{p^j}\right)\sum_{i=0}^\infty(-\alpha T)^{ip^n}.
$$
The support is contained in $\{ip^n\mid i\geq 0\}\cup\{p^j+ip^n\mid i\geq 0\}$.  Clearly, $d+ip^n=(p^n-p^j) + ip^n$ and $d+(p^j+ip^n)=(i+1)p^n$ belong to $\Gamma_{p,n}$.  The argument for $d=p^n$ is likewise, and even simpler. Thus the action of $G=\GG_a$ extends.
\end{proof}

Set $\Gamma=\Gamma_{p,n}$ and $X=X_{p,n}$.  With respect to the infinitesimal group scheme $U_n$, all open sets in $X$ are stable, and the action on the affine open set $ \Spec K[T^\Gamma]$ is given by the ring homomorphism
$$
K[T^\Gamma]\lra \Gamma\left(\O_{U_n}\right)\otimes K\left[T^\Gamma\right],\quad T^d\longmapsto T^d\left(1+\sum_{i=1}^n\lambda_iT^{-p^i}\right)^{-d}
$$ 
with exponents $d\in\Gamma$.  The \emph{orbit map} $x_0\colon U_n\ra X$ corresponding to the rational point $x_0\in X$ is given by the homomorphism $\varphi\colon K[T^\Gamma]\ra\Gamma(\O_{U_n})$ that is implicitly described by
$$
\varphi\left(T^d\right)=  T^d\left(1+\sum_{i=1}^n\lambda_iT^{1-p^i}\right)^{-d} \bigg|_{T=0}.
$$
Note that one has to determine the product before substituting $T=0$ because the second factor usually contains terms of negative degree.  The computation for the generators \eqref{generators semigroup} of our numerical semigroup is immediate: $\varphi(T^{p^n})=0$ and $\varphi(T^{p^n-p^j})=\lambda_{n-j}^{p^j} $ for $0\leq j\leq n-1$. Now recall that the \emph{inertia group scheme} in $U_n$ is defined by the largest quotient of $\Gamma(\O_{U_n})$ in which $\varphi$ becomes the zero map. Setting $i=n-j$, we get the following.

\begin{proposition}
\label{inertia}
Inside $U_n=\Spec K[\lambda_1,\ldots,\lambda_n]\,/(\lambda_1^{p^n},\lambda_2^{p^{n-1}},\ldots,\lambda_n^p)$, the inertia group scheme with respect to the rational point $x_0\in X$ is defined by the equations $\lambda_i^{p^{n-i}}=0$ for $1\leq i\leq n$.
\end{proposition}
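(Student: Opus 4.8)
The plan is to identify the inertia group scheme with the scheme-theoretic stabilizer of the rational point $x_0$, that is, with the fibre of the orbit map $x_0\colon U_n\to X$ over $x_0$, and then to read off its defining ideal from the formulas for $\varphi$ recorded just before the statement. Since $U_n$ is infinitesimal, the orbit map lands in the affine chart $\Spec K[T^\Gamma]$ and is therefore described by the $K$-algebra homomorphism $\varphi\colon K[T^\Gamma]\to\Gamma(\O_{U_n})$, while $x_0$ corresponds to the evaluation $T^d\mapsto 0$ (for $d\in\Gamma$, $d>0$). Forming the fibre product $U_n\times_X\Spec K$ then shows that the inertia group scheme is $\Spec$ of $\Gamma(\O_{U_n})/I$, where $I$ is the ideal generated by all $\varphi(T^d)$ with $d\in\Gamma$, $d>0$; this is exactly the ``largest quotient of $\Gamma(\O_{U_n})$ in which $\varphi$ becomes the zero map'' in the sense used above.

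First I would cut $I$ down to a set of monoid generators of $\Gamma=\Gamma_{p,n}$. If $d=\sum_k c_k g_k$ with the $g_k$ among the generators \eqref{generators semigroup} and $\sum_k c_k\geq 1$, then $T^d=\prod_k(T^{g_k})^{c_k}$, hence $\varphi(T^d)=\prod_k\varphi(T^{g_k})^{c_k}$ lies in the ideal generated by the $\varphi(T^{g_k})$, because $\varphi$ is a ring homomorphism and some $c_k\geq 1$. As all generators $p^n$ and $p^n-p^j$ are strictly positive, it follows that $I=\bigl(\varphi(T^{p^n}),\ \varphi(T^{p^n-p^j})\mid 0\leq j\leq n-1\bigr)$. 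Substituting the values $\varphi(T^{p^n})=0$ and $\varphi(T^{p^n-p^j})=\lambda_{n-j}^{p^j}$, and reindexing by $i=n-j$, this becomes $I=\bigl(\lambda_i^{p^{n-i}}\mid 1\leq i\leq n\bigr)$.

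It then remains to compute the quotient. Writing $\Gamma(\O_{U_n})=K[\lambda_1,\ldots,\lambda_n]/(\lambda_i^{p^{n-i+1}}\mid 1\leq i\leq n)$, the preimage of $I$ in the polynomial ring is $(\lambda_i^{p^{n-i+1}}\mid i)+(\lambda_i^{p^{n-i}}\mid i)$; since $\lambda_i^{p^{n-i+1}}=(\lambda_i^{p^{n-i}})^{p}$, the first family of generators is absorbed into the second, so this preimage equals $(\lambda_i^{p^{n-i}}\mid 1\leq i\leq n)$. Hence the coordinate ring of the inertia group scheme is $K[\lambda_1,\ldots,\lambda_n]/(\lambda_i^{p^{n-i}}\mid 1\leq i\leq n)$, as asserted (with $\lambda_n=0$ in the bottom case $i=n$). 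There is essentially no serious obstacle here; the two points that deserve a word of justification are the remark that the orbit map of the infinitesimal scheme $U_n$ factors through $\Spec K[T^\Gamma]$, so that $\varphi$ indeed describes it on coordinate rings, and the bookkeeping that the ``small'' relations $\lambda_i^{p^{n-i}}=0$ subsume the ``large'' ones $\lambda_i^{p^{n-i+1}}=0$ already present in $\Gamma(\O_{U_n})$.
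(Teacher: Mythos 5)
Your proposal is correct and follows essentially the same route as the paper: the paper also identifies the inertia group scheme as the largest quotient of $\Gamma(\O_{U_n})$ killing $\varphi$ on the maximal ideal of $x_0$, evaluates $\varphi$ on the semigroup generators ($\varphi(T^{p^n})=0$, $\varphi(T^{p^n-p^j})=\lambda_{n-j}^{p^j}$), and reindexes via $i=n-j$. Your additional bookkeeping --- that it suffices to take ideal generators coming from monoid generators of $\Gamma_{p,n}$, and that the relations $\lambda_i^{p^{n-i+1}}=(\lambda_i^{p^{n-i}})^p$ are absorbed --- merely makes explicit what the paper leaves implicit, so there is nothing to correct.
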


This inertia group scheme coincides with the canonical inclusion of $U_{n-1}\subset U_n$, which is also the image of the relative Frobenius map, and we thus obtain a $U_n$-stable closed subscheme $U_n/U_{n-1}\subset X$. \textit{A priori}, this is an effective Weil divisor supported by $x_0$, of degree $[U_n:U_{n-1}] = h^0(\O_{U_n})/h^0(\O_{U_{n-1}})= p^n$.  The following observation will be crucial in what follows.

\begin{proposition}
\label{cartier divisor}
The  Weil divisor    $U_n/U_{n-1}\subset X$ is an effective Cartier divisor.
\end{proposition}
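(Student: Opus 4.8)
The plan is to write down the ideal sheaf of $U_n/U_{n-1}$ explicitly and recognize it as principal. Since $U_n$ is infinitesimal, the orbit of $x_0$ is supported at the single point $x_0$, so on the chart $\Spec K[T^{-1}]=\AA^1$ the subscheme $U_n/U_{n-1}$ is empty and its ideal sheaf is the unit ideal; nothing is to be checked there. Everything therefore takes place on the affine chart $A=\Spec K[T^\Gamma]$ containing $x_0$, and it suffices to show that the ideal $\shI\subset K[T^\Gamma]$ cutting out $U_n/U_{n-1}$ equals $(T^{p^n})$. Note that $T^{p^n}\in K[T^\Gamma]$ because $p^n$ is one of the generators \eqref{generators semigroup}, and that it is a nonzerodivisor since $K[T^\Gamma]$ is a domain; so such an identity would immediately give that $U_n/U_{n-1}$ is an effective Cartier divisor, namely $V(T^{p^n})\subset X$.

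First I would check $T^{p^n}\in\shI$. The closed subscheme $U_n/U_{n-1}\subset A$ is the scheme-theoretic image of the orbit map $x_0\colon U_n\to A$, so $\shI=\ker\varphi$, where $\varphi\colon K[T^\Gamma]\to\Gamma(\O_{U_n})$ is the homomorphism with $\varphi(T^d)=T^d\bigl(1+\sum_{i=1}^n\lambda_iT^{1-p^i}\bigr)^{-d}\big|_{T=0}$. For $d=p^n$ the factor $\bigl(1+\sum\lambda_iT^{1-p^i}\bigr)^{p^n}=1+\sum\lambda_i^{p^n}T^{(1-p^i)p^n}$ equals $1$ in $\Gamma(\O_{U_n})$ because $\lambda_i^{p^n}=0$; hence $\varphi(T^{p^n})=T^{p^n}|_{T=0}=0$, so $(T^{p^n})\subseteq\shI$. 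Next I would compute $\dim_K K[T^\Gamma]/(T^{p^n})=p^n$: the subring $K[T^{p^n}]\subset K[T^\Gamma]$ is a polynomial ring, over which $K[T^\Gamma]$ is a finitely generated torsion-free module (finite because each $T^d$, $d\in\Gamma$, satisfies $(T^d)^{p^n}=(T^{p^n})^d$; torsion-free as a submodule of the domain $K[T]$), hence free of rank $[K(T):K(T^{p^n})]=p^n$, so reduction modulo $T^{p^n}$ yields a $K$-vector space of dimension $p^n$. On the other hand, the degree of the effective divisor $U_n/U_{n-1}$, that is $\dim_K K[T^\Gamma]/\shI=\dim_K\Gamma(\O_{U_n/U_{n-1}})$, was already identified above with $[U_n:U_{n-1}]=p^n$. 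Thus the surjection $K[T^\Gamma]/(T^{p^n})\twoheadrightarrow K[T^\Gamma]/\shI$ is a map of $K$-vector spaces of equal finite dimension $p^n$, hence an isomorphism, so $\shI=(T^{p^n})$. Therefore $U_n/U_{n-1}$ is locally generated by the single nonzerodivisor $T^{p^n}$ on $A$ (and by $1$ on $\Spec K[T^{-1}]$), and is an effective Cartier divisor, coinciding with $V(T^{p^n})$.

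The only genuinely non-formal input is the equality $\dim_K K[T^\Gamma]/(T^{p^n})=p^n$, and even that is a short argument over the principal ideal domain $K[T^{p^n}]$. I expect the point requiring the most care to be the clean identification of the ideal sheaf of $U_n/U_{n-1}$ with $\ker\varphi$, i.e. making sure that the presentation of $U_n/U_{n-1}$ as a quotient group scheme and its presentation as the scheme-theoretic image of the orbit map inside $X$ really agree; once that is in place, the rest is a comparison of two $K$-dimensions that the preceding discussion has already pinned down.
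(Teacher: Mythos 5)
Your proof is correct, but it gets the two dimension counts by different means than the paper, and it is worth spelling out where the weight falls in each version. You compute $\dim_K K[T^\Gamma]/(T^{p^n})=p^n$ \emph{exactly}, via freeness of $K[T^\Gamma]$ as a module over the polynomial subring $K[T^{p^n}]$, and then you quote the \emph{a priori} degree $\dim_K K[T^\Gamma]/\shI=[U_n:U_{n-1}]=p^n$ from the discussion preceding the proposition; the paper instead computes the image of $\varphi$ explicitly --- it is the subring of $K[\lambda_1,\ldots,\lambda_n]/(\lambda_1^{p^n},\ldots,\lambda_n^p)$ generated by the $\lambda_i^{p^{n-i}}$, visibly of dimension $p^n$ --- and only needs the \emph{upper} bound $\dim_K K[T^\Gamma]/(T^{p^n})\leq p^n$, obtained from the semigroup relation $p(p^n-p^j)=(p-1)p^n+(p^n-p^{j+1})$, which forces the classes of $T^{p^n-p^j}$ to satisfy $x_j^p=0$. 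The point you flag yourself is exactly where your route differs in substance: your dimension comparison needs the degree of $V(\shI)$ to be exactly $p^n$, i.e.\ that $K[T^\Gamma]\to\Gamma(\O_{U_n/U_{n-1}})$ is surjective, equivalently that the orbit map $U_n/U_{n-1}\to X$ is a closed immersion rather than merely having scheme-theoretic image supported at $x_0$. This is indeed what the paper asserts before the proposition, and it is covered by the standard theory of orbits for finite group schemes (Demazure--Gabriel, III.3, which the paper invokes elsewhere), so your argument is sound as a proof taking that assertion as given; note, however, that the paper's own proof does not consume this input --- by computing $\operatorname{im}(\varphi)$ it reproves the degree statement along the way --- so if you want your version to be equally self-contained you should either cite the orbit-immersion fact explicitly or observe directly that $\operatorname{im}(\varphi)$ contains the $p^n$ linearly independent monomials in the $\lambda_i^{p^{n-i}}$, which gives the lower bound $\dim_K K[T^\Gamma]/\shI\geq p^n$ without any appeal to the quotient $U_n/U_{n-1}$. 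What your route buys in exchange is a cleaner structural fact: the freeness of $K[T^\Gamma]$ over $K[T^{p^n}]$ of rank $p^n$, which identifies $V(T^{p^n})$ as a fiber of a finite flat map and makes the degree computation transparent.
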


\begin{proof}
The closed subscheme lies in the affine open set $\Spec K[T^\Gamma]$ and corresponds to the ideal $\ideala=\Kernel(\varphi)$. This ideal contains the monomial $T^{p^n}$, and we claim that the inclusion $(T^{p^n})\subset\ideala$ is an equality.  In other words, we have to verify that the resulting map
$$
\varphi\colon K\left[T^\Gamma\right]/\left(T^{p^n}\right)\lra \Gamma(\O_G)= K[\lambda_1,\ldots,\lambda_n]\,/\left(\lambda_1^{p^n},\lambda_2^{p^{n-1}},\ldots,\lambda_n^p\right)
$$
is injective. We computed above that its image is the subring generated by the powers $\lambda_i^{p^{n-i}}$ for $1\leq i\leq n$, which is a $K$-algebra of degree $p^n$.  So it suffices to verify that the $K$-algebra $K[T^\Gamma]\,/(T^{p^n})$ has degree at most~$p^n$.  This algebra is generated by the classes $x_j$ of $T^{p^n-p^j}$ with $0\leq j\leq n-1$.  From the relation
$$
p(p^n-p^j)= (p-1)p^n + \left(p^n-p^{j+1}\right)  
$$
in the numerical semigroup $\Gamma$, we infer a factorization $(T^{p^n-p^j})^p= (T^{p^n})^{p-1}\cdot T^{p^n-p^{j+1}}$ in the ring $K[T^\Gamma]$, and hence $x_j^p=0$. Thus $K[T^\Gamma]\,/(T^{p^n})$ has degree at most $p^n$.
\end{proof}

\section{The complete intersection property}
\label{Complete intersection}

We keep the notation as in the preceding section and continue to study the algebra of the numerical semigroup $\Gamma=\Gamma_{p,n}$ and also the geometry of the compactification $X=X_{p,n}$ of the affine line $\AA^1=\Spec K[T^{-1}]$ defined by the coordinate ring $K[T^\Gamma]$.

Recall that any numerical semigroup $\Gamma$ given by a set of $d\geq 1$ generators $a_1,\ldots,a_d$ and ensuing surjection $\NN^d\ra \Gamma$ is called a \emph{complete intersection} if the congruence $R=\NN^d\times_\Gamma\NN^d$ is generated by $d-1$ elements.  According to \cite[Corollary 1.13]{Herzog 1970}, this is equivalent to the condition that the complete local ring $A=K[[T^\Gamma]]$ is a complete intersection in the sense of commutative algebra; in other words, $A\simeq K[[u_1,\ldots,u_r]]\,/(f_1,\ldots,f_s)$ for some $r\geq 0$ and some regular sequence $f_1,\dots,f_s$, here necessarily with $s=r-1$.

\begin{proposition}
\label{conductor}
Our numerical semigroup $\Gamma_{p,n}$ is a complete intersection, and its conductor $c_{p,n}$ and genus $g_{p,n}$ are given by the formulas
$$
c_{p,n}=np^{n+1}-(n+2)p^n+2 \quadand g_{p,n}=\tfrac{1}{2}c_{p,n}.
$$
Moreover, $G=\{p^n-p^{n-1}, p^n-p^{n-2}, \ldots, p^n-1, p^n\}$ is the smallest generating set provided $p\geq 3$; for the prime $p=2$ and $n\geq 1$, one has to omit $p^n$.
\end{proposition}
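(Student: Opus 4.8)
The plan is to exhibit $\Gamma_{p,n}$ as an iterated \emph{gluing} of smaller numerical semigroups, so that Delorme's recursive characterization of complete intersection numerical semigroups \cite{Delorme 1976} applies and all the invariants can be computed by induction on $n$. The structural identity I would use is
$$
\Gamma_{p,n}=\bigl\langle\, p\cdot\Gamma_{p,n-1},\; p^n-1 \,\bigr\rangle\qquad(n\geq 1),
$$
which says that multiplying the generators $p^{n-1},\ p^{n-1}-p^{j}$ $(0\le j\le n-2)$ of $\Gamma_{p,n-1}$ by $p$ yields precisely the generators $p^n,\ p^n-p^{j}$ $(1\le j\le n-1)$ of $\Gamma_{p,n}$, while adjoining the single element $p^n-1=p^n-p^0$ supplies the last one. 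I would then check that this is a genuine gluing of $\Gamma_{p,n-1}$ and $\NN$: one needs $\gcd(p,\,p^n-1)=1$, that the scaling factor $p$ lies in $\NN$ and is not its minimal generator, and --- the only point requiring any argument --- that the scaling factor $p^n-1$ lies in $\Gamma_{p,n-1}$, which follows from the identity $p^n-1=(p-1)p^{n-1}+(p^{n-1}-1)$ once one observes that $p^{n-1}\in\Gamma_{p,n-1}$ (a generator if $p\geq 3$, and $2(p^{n-1}-p^{n-2})$ if $p=2$). The degenerate cases $\Gamma_{p,0}=\NN$ and $\Gamma_{2,1}=\NN$, where the gluing collapses, form the base of the induction.

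Granting this identity, the complete intersection property follows at once: $\NN$ is a complete intersection, a gluing of complete intersections is again one by Delorme, and the induction closes. A complete intersection numerical semigroup is Gorenstein, hence symmetric, so automatically $g_{p,n}=\tfrac12 c_{p,n}$, and it remains only to compute the conductor. For this I would use the Frobenius-number formula for a gluing, $F(\langle k_1 S_1,\, k_2 S_2\rangle)=k_1F(S_1)+k_2F(S_2)+k_1k_2$, which with $k_1=p$, $S_1=\Gamma_{p,n-1}$, $k_2=p^n-1$, $S_2=\NN$ and $F(\NN)=-1$ gives the recursion
$$
F(\Gamma_{p,n})=p\,F(\Gamma_{p,n-1})+(p-1)(p^n-1).
$$
A routine induction, with base $c_{p,0}=0$, then confirms $F(\Gamma_{p,n})=np^{n+1}-(n+2)p^n+1$, i.e.\ $c_{p,n}=np^{n+1}-(n+2)p^n+2$.

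For the minimal generating set I would argue directly, since embedding dimensions need not add up across a gluing. The minimal generating set of a numerical semigroup is contained in every generating set, so it suffices to decide which members of $A=\{p^n\}\cup\{p^n-p^{j}\mid 0\le j\le n-1\}$ lie in the submonoid generated by the others. Put $m=\min A=p^n-p^{n-1}=p^{n-1}(p-1)$; then any expression of $a\in A$ as a sum of $N\geq 1$ elements of $A\setminus\{a\}$ forces $N\le a/m$. For $a=p^n-p^{j}$ one computes $a/m=(p-p^{j-n+1})/(p-1)<p/(p-1)\le 2$, so $N=1$, which is impossible; hence every $p^n-p^{j}$ is a minimal generator, for all $p$. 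For $a=p^n$ one has $a/m=p/(p-1)$, which is $<2$ when $p\geq 3$ (so again $N=1$, impossible, and $p^n$ is a minimal generator), whereas for $p=2$ it equals $2$, and the only possibility with $N=2$ is $p^n=2(p^n-p^{n-1})$, since $p^{j_1}+p^{j_2}=p^n$ with $j_1,j_2\le n-1$ forces $j_1=j_2=n-1$. Thus $p^n$ is redundant exactly when $p=2$, and uniqueness of the minimal generating set yields the stated description.

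The one genuinely substantive step is the reduction in the first paragraph: recognizing the gluing $\Gamma_{p,n}=\langle p\,\Gamma_{p,n-1},\, p^n-1\rangle$ and verifying $p^n-1\in\Gamma_{p,n-1}$, which is precisely what places the whole hierarchy under Delorme's recursive machinery; everything afterwards is bookkeeping and elementary estimates. The only real nuisances are keeping the degenerate base cases $\Gamma_{p,0}$ and $\Gamma_{2,1}$ straight and remembering that gluing preserves being a complete intersection but not the sum of embedding dimensions, which is why the minimal generating set has to be handled separately by hand.
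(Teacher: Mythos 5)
Your proposal is correct and follows essentially the same route as the paper: both exhibit $\Gamma_{p,n}$ as the Delorme gluing $p\,\Gamma_{p,n-1}+(p^n-1)\,\NN$ (using the same membership identity $p^n-1=(p-1)p^{n-1}+(p^{n-1}-1)\in\Gamma_{p,n-1}$), deduce the complete intersection property and symmetry, and get the conductor by the same recursion, stated there for conductors rather than Frobenius numbers. The only difference is cosmetic: for the minimal generating set the paper invokes Delorme's criterion $a_1a_2\notin a_1G_1\cup a_2G_2$ from his Proposition 10(ii), whereas you give a short direct counting argument via the multiplicity bound $N\le a/m$; both work.
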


\begin{proof}
First note that for $p=2$ and $n\geq 1$, the relation $p^n=2(p^n-p^{n-1}) $ shows that the generator $p^n$ does not belong to the smallest generating set.

We now proceed, for general $p>0$, by induction on $n\geq 0$.  For $n=0$, we have $\Gamma=\NN$, and all assertions are obvious.  Now suppose $n\geq 1$ and that the assertion holds for $n-1$.  Consider the sets of numbers
$$
G_1=\{p^{n-1}-p^{n-2},  p^{n-1}-p^{n-3},\ldots,  p^{n-1}-1,   p^{n-1}\}\quadand G_2=\{1\}.
$$ 
Both generate respective numerical semigroups $\Gamma_1$ and $\Gamma_2$, and the induction hypothesis applies to the former.  The numbers $a_1=p$ and $a_2=p^n-1$ are relatively prime, with $a_1\in \Gamma_2$ and $a_2=p(p^{n-1}-p^{n-2})+(p^{n-1}-1)\in\Gamma_1$.  Furthermore, $\Gamma=a_1\Gamma_1+a_2\Gamma_2$.  According to \cite[Proposition~10]{Delorme 1976}, the monoid $\Gamma$ is a complete intersection, and the conductor is given by the formula
\begin{equation}
\label{c formula}
c=a_1c_1 + a_2c_2 + (a_1-1)(a_2-1) = pc_1  + (p-1)(p^n-2).
\end{equation}
Here $c_2=0$ is the conductor of $\Gamma_2$, and $c_1=(n-1)p^{n}-(n+1)p^{n-1}+2$ is the conductor of $\Gamma_1$, which we know by our induction hypothesis.  Inserting the latter into \eqref{c formula}, we get the desired formula for $c_{p,n}$.  Every complete intersection semigroup is symmetric (\textit{cf.} \cite[Corollary~9.12]{Rosales; Garcia-Sanchez 2009}), which simply means that the conductor is twice the genus, and the formula for $g_{p,n}$ follows.

Now suppose $p\geq 3$. By induction, the $G_i\subset\Gamma_i$ are the smallest generating sets.  The number $a_1a_2=p^{n+1}-p$ does not belong to $a_1G_1\cup a_2G_2=G$. As explained in \cite[proof of Proposition~10(ii)]{Delorme 1976}, the subset $G\subset\Gamma$ is the smallest generating set.  For $p=2$, one argues likewise, with $p^n$ omitted.
\end{proof}

We see that the embedding dimension for the numerical semigroup $\Gamma_{p,n}$ and the local ring $\O_{X,x_0}$ is given by the formula
$$
d_{p,n}=\begin{cases}
n+1	& \text{if $p\geq 3$ or $n=0$},\\
n	& \text{if $p=2$ and $n\geq 1$}.
\end{cases}
$$
Let us also record the following geometric consequences. 

\begin{corollary}
\label{genus curve}
The curve $X=X_{n,p}$ has invariants 
$$
h^0(\O_X)=1\quadand h^1(\O_X)=\tfrac{1}{2}(np^{n+1}-(n+2)p^n+2).
$$
Moreover, the dualizing sheaf $\omega_X$ is invertible, of degree $p^n(np-n-2)$.
\end{corollary}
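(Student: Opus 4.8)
The statement is meant to collect consequences of the algebra already established, so the plan is short. First I would record that $X=X_{p,n}$ is an integral projective curve over $K$: it is the union of the integral affine schemes $\Spec K[T^{\Gamma_{p,n}}]$ and $\AA^1=\Spec K[T^{-1}]$, glued along the common localization $\Spec K[T^{\pm1}]$, and its normalization is $\PP^1$ via the map $f$ of the conductor square \eqref{conductor square}. The cohomological invariants are then immediate from the general identities \eqref{invariants curve} applied to $\Gamma=\Gamma_{p,n}$: one obtains $h^0(\O_X)=1$ and $h^1(\O_X)=g_{p,n}$, and substituting the genus formula of Proposition~\ref{conductor} gives $h^1(\O_X)=\tfrac12(np^{n+1}-(n+2)p^n+2)$.

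Next I would argue that $\omega_X$ is invertible, i.e.\ that $X$ is Gorenstein. Outside the rational point $x_0$ the curve lies inside the regular scheme $\AA^1\cup\Spec K[T^{\pm1}]$, so it suffices to treat the local ring $\O_{X,x_0}$, whose completion is $K[[T^{\Gamma_{p,n}}]]$. By Proposition~\ref{conductor} the numerical semigroup $\Gamma_{p,n}$ is a complete intersection, hence by \cite[Corollary~1.13]{Herzog 1970} the ring $K[[T^{\Gamma_{p,n}}]]$ is a complete intersection in the sense of commutative algebra, in particular Gorenstein; since the Gorenstein property passes between a noetherian local ring and its completion, $\O_{X,x_0}$ is Gorenstein. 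Thus $X$ is a Gorenstein curve and its dualizing sheaf $\omega_X$ is an invertible $\O_X$-module.

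For the degree I would combine Serre duality with Riemann--Roch on the proper curve $X$. Serre duality gives $h^0(\omega_X)=h^1(\O_X)=g_{p,n}$ and $h^1(\omega_X)=h^0(\O_X)=1$, so $\chi(\omega_X)=g_{p,n}-1=-\chi(\O_X)$. Together with the Riemann--Roch formula $\chi(\omega_X)=\deg\omega_X+\chi(\O_X)$ for the line bundle $\omega_X$, this yields $\deg\omega_X=2g_{p,n}-2=c_{p,n}-2$, and the conductor formula of Proposition~\ref{conductor} turns the right-hand side into $np^{n+1}-(n+2)p^n=p^n(np-n-2)$, as claimed. (The case $\Gamma_{p,n}=\NN$, where $X=\PP^1$, is covered without change, since then $g_{p,n}=0$ and $x_0$ is simply a regular point.)

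I do not expect a serious obstacle: the statement is essentially bookkeeping on top of \eqref{invariants curve} and Proposition~\ref{conductor}. The only step that is not pure formula-chasing is the invertibility of $\omega_X$, which rests on the chain ``complete intersection semigroup $\Rightarrow$ complete intersection local ring $\Rightarrow$ Gorenstein'', together with the elementary observation that $x_0$ is the unique non-regular point of $X$.
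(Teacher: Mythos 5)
Your proposal is correct and follows essentially the same route as the paper: the cohomological invariants come from \eqref{invariants curve} together with Proposition~\ref{conductor}, invertibility of $\omega_X$ comes from the complete-intersection (hence Gorenstein) property of the local ring at $x_0$, and the degree comes from Serre duality, $\deg(\omega_X)=-2\chi(\O_X)=p^n(np-n-2)$. Your write-up merely spells out the Herzog/completion step and the Riemann--Roch bookkeeping that the paper leaves implicit.
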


\begin{proof}
The values for $h^i(\O_X)$ follow with \eqref{invariants curve} from Proposition~\ref{conductor}.  Being locally of complete intersection, $X$ must be Gorenstein, and the dualizing sheaf is invertible.  Serre duality gives $\deg(\omega_X)=-2\chi(\O_X)=p^n(np-n-2)$.
\end{proof}

We actually can derive an explicit description for the ring $K[T^\Gamma]$ in terms of generators and relations.  Write $a_j=p^n-p^j$ and $b=p^n$ for the generators of $\Gamma=\Gamma_{p,n}$. They give rise to a surjection $\NN^{n+1}\ra \Gamma$ of monoids and an ensuing congruence $R=\NN^{n+1}\times_\Gamma\NN^{n+1}$.  The $n+1$ generators satisfy the $n$ obvious relations
\begin{equation}
\label{obvious relations}
p\cdot a_{n-1} = (p-1)\cdot b\quadand p\cdot a_j = p\cdot a_{n-1} + a_{j+1}\quad (0\leq j\leq n-2),
\end{equation}
which may be interpreted as members of the congruence $R$. To translate this into commutative algebra, let $x_j,y$ be indeterminates corresponding to the generators $a_j,b\in\Gamma$, and consider the surjection
$$
\varphi\colon K[x_1,\ldots,x_{n-1},y]\lra K[T^\Gamma]
$$ 
given by $\varphi(x_j)=T^{a_j}$ and $\varphi(y)=T^b$.  The map respects the gradings specified by $\deg(x_j)=a_j$, $\deg(y)=b$, and $\deg(T)=1$.

\begin{proposition}
\label{homogeneous ideal}
The ideal $\ideala=\Kernel(\varphi)$ is generated by the polynomials $x_{n-1}^p-y^{p-1}$ and $x_j^p-x_{n-1}^px_{j+1}$ for $ 0\leq j\leq n-2$, corresponding to the obvious relations \eqref{obvious relations}.
\end{proposition}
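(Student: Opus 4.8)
The plan is to show that the quotient $K[x_1,\dots,x_{n-1},y]/\ideala_0$, where $\ideala_0$ denotes the ideal generated by the listed polynomials $g_0,\dots,g_{n-1}$ with $g_{n-1}=x_{n-1}^p-y^{p-1}$ and $g_j=x_j^p-x_{n-1}^p x_{j+1}$ for $0\le j\le n-2$, already surjects onto $K[T^\Gamma]$ with the right Hilbert function, forcing $\ideala_0=\ideala$. First I would record the trivial inclusion $\ideala_0\subset\ideala$: each $g_j$ maps to $0$ under $\varphi$ by the obvious relations \eqref{obvious relations}, since $p\cdot a_{n-1}=(p-1)b$ and $p\cdot a_j=p\cdot a_{n-1}+a_{j+1}$. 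So there is an induced graded surjection $\bar\varphi\colon B:=K[x_1,\dots,x_{n-1},y]/\ideala_0\twoheadrightarrow K[T^\Gamma]$, and it suffices to prove it is injective, equivalently that the two graded rings have the same Poincaré series, equivalently (since $\bar\varphi$ is a degree-preserving surjection) that $\dim_K B_m\le\dim_K (K[T^\Gamma])_m$ for every $m\ge 0$.

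For the source side, the key observation is that the relations $g_j$ let one rewrite any monomial $x_1^{e_1}\cdots x_{n-1}^{e_{n-1}}y^{e}$ with some $e_j\ge p$ (for $1\le j\le n-1$) in terms of monomials of strictly smaller $x$-degree: using $g_{n-1}$ one replaces $x_{n-1}^p$ by $y^{p-1}$, and using $g_j$ one replaces $x_j^p$ ($0\le j\le n-2$, but here $j\ge 1$) by $x_{n-1}^p x_{j+1}$, which then again reduces via $g_{n-1}$. [Here I am treating $x_0$ as absent, since the generating set of $B$ is $x_1,\dots,x_{n-1},y$; the index $j$ in $g_j$ runs $0\le j\le n-2$, and $g_0=x_0^p-x_{n-1}^p x_1$ should be read with $x_0\mapsto T^{a_0}=T^{p^n-1}$ — on reflection the cleanest bookkeeping is to keep all of $x_0,\dots,x_{n-1},y$ and remember $d_{p,n}=n+1$ when $p\ge 3$; the $p=2$ case, where $y$ is redundant and $y=x_{n-1}^2/x_{n-1}$ — rather $b=2a_{n-1}$ — is handled by the same relations after discarding the redundant generator.] Iterating, $B$ is spanned over $K$ by the monomials in which each $x_j$-exponent is at most $p-1$, multiplied by arbitrary powers of $y$; that is, $B$ is a free module of rank $p^{\,d-1}$ over $K[y]$, with basis $\{x_0^{e_0}\cdots x_{n-1}^{e_{n-1}}: 0\le e_j\le p-1\}$. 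In particular each graded piece $B_m$ has dimension at most the number of such monomials times powers of $y$ landing in degree $m$.

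On the target side, $K[T^\Gamma]$ is the graded subring of $K[T]$ spanned by $\{T^\gamma:\gamma\in\Gamma\}$, so $\dim_K(K[T^\Gamma])_m$ is $1$ if $m\in\Gamma$ and $0$ otherwise. Thus injectivity of $\bar\varphi$ amounts to: the at-most-$p^{d-1}$-dimensional $K[y]$-module $B$ maps onto $K[T^\Gamma]$, which as $K[y]=K[T^{p^n}]$-module is free of rank equal to the number of residue classes mod $p^n$ hit by $\Gamma$ — and one checks directly that $\Gamma_{p,n}$ meets $p^{\,d-1}$ residue classes mod $p^n$, realized precisely by the monomials $\prod x_j^{e_j}$, $0\le e_j\le p-1$, whose $\varphi$-images $T^{\sum e_j a_j}$ have pairwise distinct residues mod $p^n$ (since $a_j=p^n-p^j\equiv -p^j$, and $\sum e_j(-p^j)$ with $0\le e_j\le p-1$ runs over a complete set of representatives being base-$p$ expansions). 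Therefore $\bar\varphi$ carries a $K[y]$-basis of $B$ to a $K[T^{p^n}]$-basis of $K[T^\Gamma]$, hence is an isomorphism, so $\ideala_0=\ideala$. The main obstacle is the bookkeeping in the last step — verifying that the reduced monomials map to genuinely independent elements, i.e. that the spanning set of size $p^{d-1}$ is actually a basis — but this reduces to the elementary fact about base-$p$ digit expansions just indicated, together with Proposition~\ref{conductor} which guarantees $d_{p,n}$ is the embedding dimension so that no generator is redundant beyond the known $p=2$ exception. As a sanity check, this also re-proves the complete intersection statement concretely: $B$ is cut out in $K[x_0,\dots,x_{n-1},y]$ by $n$ equations (or $n-1$ when $p=2$), matching $d-1$.
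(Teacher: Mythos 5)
Your argument is correct in substance, but it takes a genuinely different route from the paper. The paper deduces the proposition from Delorme's lemma on \emph{suites distingu\'ees}: it builds the partition sequence $L_i=\{a_{i-1},\ldots,a_{n-1},b\}$, $L_i'=\{a_{i-2}\}$ together with the polynomials $Z_i$, checks that $\deg(Z_i)$ equals the lcm of $\gcd(L_i)$ and $\gcd(L_i')$, and cites Delorme to conclude that these generate the kernel. You instead give a direct straightening argument: the listed relations rewrite any monomial so that every $x_j$-exponent is at most $p-1$ (each substitution $x_j^p\mapsto y^{p-1}x_{j+1}$, resp.\ $x_{n-1}^p\mapsto y^{p-1}$, strictly drops the total $x$-degree), so the quotient $B$ is spanned over $K[y]$ by the $p^n$ monomials $\prod_j x_j^{e_j}$ with $0\le e_j\le p-1$; since $a_j\equiv -p^j \pmod{p^n}$, the base-$p$ digit expansion shows their images $T^{\sum e_ja_j}$ lie in pairwise distinct residue classes mod $p^n$, hence are $K[T^{p^n}]$-linearly independent inside $K[T]$; writing any kernel element in the spanning set and applying the induced map then forces all $K[y]$-coefficients to vanish, so the map is injective and $\ideala_0=\ideala$. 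This is a complete, self-contained and more elementary proof (in effect you compute the Ap\'ery set of $\Gamma_{p,n}$ relative to $p^n$), and, as you observe, it re-derives the complete intersection property concretely, whereas the paper's route reuses the Delorme machinery already invoked for Proposition~\ref{conductor}. Two slips in your bookkeeping should be repaired, though neither affects the core argument: the relevant rank is $p^n$ (the number of residue classes mod $p^n$ met by $\Gamma$, which is all of them because the complement is finite), not $p^{d-1}$ --- these agree only when $d_{p,n}=n+1$, i.e.\ $p\ge 3$ --- and neither the embedding dimension nor Proposition~\ref{conductor} is needed anywhere. Likewise there is no need to special-case $p=2$: keeping $y$ and the relation $x_{n-1}^2-y$, the argument runs verbatim over $K[y]$; literally discarding $y$ and working over $K[x_{n-1}^2]$ would actually spoil the distinct-residue count, since the images would then occupy only $2^{n-1}$ classes mod $2^{n-1}$. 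Finally, your decision to keep $x_0$ among the variables is the right reading: the polynomial ring in the paper should indeed be $K[x_0,\ldots,x_{n-1},y]$.
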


\begin{proof}
This is an application of an observation of Delorme \cite[Lemma~8]{Delorme 1976}. Recall that our numerical semigroup is generated by the $n+1$ elements $a_0,\ldots,a_{n-1},b\in\Gamma$.  Delorme's observation hinges on two descending sequences
$$
P_{n+1},P_n,\ldots, P_1\quadand Z_{n+1},Z_n,\ldots,Z_2.
$$
The first sequence comprises \emph{partitions} $P_i$ of the generating set $G=\{a_0,\ldots,a_{n-1},b\}$, subject to the following condition: $P_{n+1}$ is the partition into singletons, and each $P_{i-1}$ is obtained from its precursor $P_i$ by replacing certain members $L_i,L'_i\in P_i$ by their union.  The second sequence consists of \emph{homogeneous polynomials} $Z_i$ in the indeterminates $x_0,\ldots,x_{n-1},y$, taking the form $Z=H_i- H'_i$ for some monic monomials $H_i$ and $H'_i$, each involving only indeterminates indexed by $L_i$ and $L'_i$, respectively.  In \textit{loc.\ cit.}~the sequences are denoted by $\shP$ and $\shZ$, and the pair $(\shP,\shZ)$ is called a \emph{suite distingu\'ee}.

Note that the partitions $P_i$ are fully determined by the sets $L_i,L'_i\subset G$ with $2\leq i\leq n+1$.  We now define such a partition sequence by setting
$$
L_i=\{a_{i-1},\ldots,a_{n-1},b\}\quadand L'_i=\{a_{i-2}\}.
$$
Note that this starts with the singletons $L_{n+1}=\{b\}$ and $L'_{n+1}=\{a_{n-1}\}$. The homogeneous polynomials are declared as
$$
Z_{n+1}=y^{p-1}-x_{n-1}^p\quadand Z_i=x_{i-2}^p-x_{n-1}^px_{i-1}\quad (2\leq i\leq n).
$$
These have $\deg(Z_i)= p^{n+1}-p^{i-1}$ for all $2\leq i\leq n+1$. One sees
$$
\gcd(L_i) =\gcd\left(p^{i-1}, \ldots, p^{n-1}, p^n\right)= p^{i-1} \quadand \gcd\left(L'_i\right)= p^n-p^{i-2}.
$$
The least common multiple of the above two gcds is given by $p(p^n-p^{i-2})$, which coincides with $\deg(Z_i)$.  Our assertion now follows from \cite[Lemma~8]{Delorme 1976}.
\end{proof}

This has important consequences for K\"ahler differentials.

\begin{corollary}
\label{tangent sheaf}
The sheaf $\Omega^1_{X/K}/\Torsion$ is invertible of degree $-p^n$, and the tangent sheaf $\Theta_{X/K}=\uHom(\Omega^1_{X/K},\O_X)$ is invertible of degree $p^n$.
\end{corollary}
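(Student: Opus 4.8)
The plan is to prove first that $\shL:=\Omega^1_{X/K}/\Torsion$ is an invertible sheaf of degree $-p^n$; the statement for $\Theta_{X/K}$ is then formal. Since $X$ is integral, $\O_X$ is torsion-free, so every local homomorphism from $\Torsion(\Omega^1_{X/K})$ to $\O_X$ vanishes; applying $\uHom(-,\O_X)$ to $0\to\Torsion(\Omega^1_{X/K})\to\Omega^1_{X/K}\to\shL\to 0$ gives $\Theta_{X/K}=\uHom(\Omega^1_{X/K},\O_X)=\uHom(\shL,\O_X)=\shL^{\vee}$, which is invertible of degree $p^n$ as soon as $\shL$ is invertible of degree $-p^n$.

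To establish invertibility it suffices to trivialize $\shL$ over the two affine charts $\AA^1=\Spec K[T^{-1}]$ and $\Spec K[T^\Gamma]$ covering $X$. Over $\AA^1$ the curve is smooth, so $\Omega^1$ is already free of rank one on $d(T^{-1})$ with no torsion. The substantive part is the chart $\Spec K[T^\Gamma]$ containing $x_0$, where I would run the conormal sequence attached to the complete-intersection presentation of Proposition~\ref{homogeneous ideal}: write $K[T^\Gamma]=P/\ideala$ with $P=K[x_0,\dots,x_{n-1},y]$, $x_j\mapsto T^{p^n-p^j}$, $y\mapsto T^{p^n}$, and $\ideala=\Kernel\bigl(P\to K[T^\Gamma]\bigr)$ generated by the binomials $x_{n-1}^p-y^{p-1}$ and $x_j^p-x_{n-1}^p x_{j+1}$ ($0\le j\le n-2$), which form a regular sequence by Proposition~\ref{conductor}. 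Then $\ideala/\ideala^2$ is $K[T^\Gamma]$-free of rank $n$ on their classes and one has
\[
\ideala/\ideala^2\ \xrightarrow{\ d\ }\ \textstyle\bigoplus_{j=0}^{n-1}K[T^\Gamma]\,dx_j\ \oplus\ K[T^\Gamma]\,dy\ \lra\ \Omega^1_{K[T^\Gamma]/K}\ \lra\ 0.
\]
Here the characteristic-$p$ identity $d(x^p)=0$ makes the relevant differentials very sparse:
\[
d\bigl(x_{n-1}^p-y^{p-1}\bigr)=-(p-1)\,y^{p-2}\,dy,\qquad d\bigl(x_j^p-x_{n-1}^p x_{j+1}\bigr)=-\,x_{n-1}^p\,dx_{j+1}\quad(0\le j\le n-2),
\]
and in particular $dx_0$ does not occur. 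Hence the image of $d$ lies in the free submodule spanned by $dx_1,\dots,dx_{n-1},dy$, so $\Omega^1_{K[T^\Gamma]/K}=K[T^\Gamma]\,dx_0\oplus M$ where $M$ is spanned by $dx_1,\dots,dx_{n-1},dy$ and is a torsion module: the nonzero element $x_{n-1}^p$ annihilates $dx_1,\dots,dx_{n-1}$, while $y^{p-2}$ annihilates $dy$ for $p\ge 3$ and $dy$ is already zero for $p=2$. Thus $M=\Torsion(\Omega^1_{K[T^\Gamma]/K})$ and
\[
\shL\big|_{\Spec K[T^\Gamma]}\ =\ \Omega^1_{K[T^\Gamma]/K}/\Torsion\ \cong\ K[T^\Gamma]\cdot dx_0\ =\ K[T^\Gamma]\cdot d\!\left(T^{p^n-1}\right)
\]
is free of rank one, and together with the chart $\AA^1$ this shows that $\shL$ is invertible.

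For the degree I would use the rational section $s=d(T^{-1})$ of $\shL$: on $\AA^1$ it is a nowhere-vanishing generator, so $\operatorname{div}(s)$ is supported at $x_0$, while on $\Spec K[T^\Gamma]$, where $\shL$ is free on $d(T^{p^n-1})$, one has in the generic fiber $d(T^{p^n-1})=(p^n-1)T^{p^n-2}\,dT=-T^{p^n-2}\,dT$ and $d(T^{-1})=-T^{-2}\,dT$, hence $s=T^{-p^n}\,d(T^{p^n-1})$, so $\operatorname{div}(s)=-\operatorname{div}(T^{p^n})$ near $x_0$. Since $p^n$ is a generator of $\Gamma_{p,n}$, the ideal $(T^{p^n})\subset K[T^\Gamma]$ cuts out the effective Cartier divisor which, by Proposition~\ref{cartier divisor} and the discussion preceding it, is precisely $U_n/U_{n-1}$, of degree $[U_n:U_{n-1}]=p^n$. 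Therefore $\operatorname{div}\bigl(d(T^{-1})\bigr)=-(U_n/U_{n-1})$ as a Cartier divisor on $X$, so $\deg\shL=-p^n$ and $\deg\Theta_{X/K}=p^n$.

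The main obstacle is the invertibility of $\Omega^1_{X/K}/\Torsion$ near the singular point $x_0$: on an arbitrary singular curve a torsion-free rank-one sheaf need not be locally free, so one genuinely has to exhibit a free generator, and what makes this possible is exactly the combination of the explicit binomial equations of Proposition~\ref{homogeneous ideal} with the vanishing of $d(x^p)$ in characteristic $p$, which together force the conormal relations to miss the coordinate $x_0=T^{p^n-1}$ altogether. Once invertibility is in hand the degree is a short computation, the only real input being the reuse of the effective Cartier divisor $U_n/U_{n-1}$ of Proposition~\ref{cartier divisor} as the pole divisor of an explicit rational differential.
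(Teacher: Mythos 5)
Your proof is correct and follows essentially the same route as the paper: both use the presentation from Proposition~\ref{homogeneous ideal} to see that the relations among the differentials involve only $dy$ and $dx_{j+1}$ (hence are torsion, since $y,x_{n-1}\neq 0$ in the domain $K[T^\Gamma]$), leaving $dx_0=d(T^{p^n-1})$ as a free generator modulo torsion, and then both compute the degree from the transition $d(T^{-1})=T^{-p^n}\,d(T^{p^n-1})$ on the overlap. Your extra touches --- the explicit conormal sequence, the direct-sum splitting, and invoking Proposition~\ref{cartier divisor} to pin down the degree of the pole divisor --- are only presentational refinements of the paper's argument.
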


\begin{proof}
The main task is to compute the module of K\"ahler differentials for the integral domain $K[T^\Gamma]$.  In light of Proposition~\ref{homogeneous ideal}, $\Omega^1_{K[T^\Gamma]/K}$ is generated by the $n+1$ differentials $dx_j$ and $dy$, modulo the $n$ relations
\begin{equation}
\label{relation differentials}
y^{p-2}dy\quadand x_{n-1}^pdx_{j+1} \quad (0\leq j\leq n-2).
\end{equation}
The ring elements $y$ and $x_{n-1}$ are non-zero because they correspond to monomials in $K[T^\Gamma]$, so $dy$ and $dx_{j+1}$ for $0\leq j\leq n-2$ are torsion.  We infer that the map $K[T^\Gamma]\ra \Omega^1_{K[T^\Gamma]/K}$ given by the remaining differential $dx_0$ is bijective modulo torsion.  The latter differential is given by $dT^{p^n-1}$.

Let $\shN$ be the quotient of $\Omega^1_{X/K}$ by its torsion subsheaf, and consider the affine open covering $X=U_0\cup U_1$ with $U_0=\Spec K[T^\Gamma]$ and $U_1=\Spec K[T^{-1}]$. We have trivializations $\shN|U_0$ and $\shN|U_1$, given by $dT^{p^n-1}$ and $dT^{-1}$. On the overlap these become $-T^{p^n-2}dT$ and $-T^{-2}dT$, which are related by the cocycle $T^{p^n}\in\Gamma(U_0\cap U_1,\O_X^\times)$.  This gives $\deg(\shN)=-p^n$. The assertion for the dual sheaf $\Theta_{X/K}=\shN^\vee$ is immediate.
\end{proof}

\section{The projective model}
\label{Projective model}

We keep the set-up of the previous section and now describe a projective model for our curve $X=X_{p,n}$.  First note that the $n$ obvious relations \eqref{obvious relations} for our monoid $\Gamma=\Gamma_{p,n}$ can be replaced by
\begin{equation}
\label{more obvious relations}
p\cdot a_{n-1} = (p-1)\cdot b\quadand p\cdot a_j = (p-1)\cdot b + a_{j+1}\quad (0\leq j\leq n-2)
\end{equation}
by using the first of these relations. Now write $\PP^{n+1}=\Proj K[U_0,\ldots,U_{n-1}, V,Z]$, and consider the closed subscheme $C=C_{p,n}$ defined by the $n$ homogeneous equations
\begin{equation}
\label{homogeneous equations}
U_{n-1}^p-V^{p-1}Z=0\quadand U_j^p-V^{p-1}U_{j+1}=0\quad (0\leq j\leq n-2).
\end{equation}
First observe that $C$ is covered by $D_+(Z)\cup D_+(V)$ because it contains only the point $(0:\ldots:0:1:0) $ on the hyperplane given by $Z=0$.  On these two charts, we see that
$$
T^{p^n-p^j}\longmapsto U_j/Z,\quad T^{p^n}\longmapsto V/Z,\quadand T^{-1}\longmapsto U_0/V
$$
constitute  an isomorphism   $C\ra X$, which we regard as an identification.  

\begin{proposition}
\label{tangent sheaf very ample}
The homogeneous polynomials \eqref{homogeneous equations} form a regular sequence in the polynomial ring, the curve $X\subset\PP^{n+1}$ has degree $p^n$, and
$$
\omega_X=\O_X(np-n-2)\quadand \Theta_{X/k}=\O_X(1).
$$
In particular, $\Theta_{X/k}$ is very ample, and $\omega_X=\Theta_{X/k}^{\otimes r}$ with exponent $r=np-n-2$.
\end{proposition}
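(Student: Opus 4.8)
The plan is to prove the five assertions in the stated order, using the chart description $X=X_{p,n}\cong C\subset\PP^{n+1}$ from this section together with Proposition~\ref{homogeneous ideal} and Corollary~\ref{tangent sheaf}. Write $S=K[U_0,\ldots,U_{n-1},V,Z]$ and let $f_1,\dots,f_n$ be the forms in~\eqref{homogeneous equations}. First I would check that $C=V_+(f_1,\dots,f_n)$ is covered by the affine opens $C\cap D_+(Z)$ and $C\cap D_+(V)$: on the locus $V(V)\cap V(Z)$ the equations force $U_0=\cdots=U_{n-1}=0$, so $C$ meets no point off $D_+(Z)\cup D_+(V)$. On $D_+(Z)$, in the variables $u_j=U_j/Z$ and $v=V/Z$, the dehomogenized ideal coincides with the kernel $\ideala$ of Proposition~\ref{homogeneous ideal} (the two exhibited generating sets differ only by multiples of $u_{n-1}^p-v^{p-1}$), whence $C\cap D_+(Z)\cong\Spec K[T^\Gamma]$; on $D_+(V)$ the equations read $(U_{n-1}/V)^p=Z/V$ and $(U_j/V)^p=U_{j+1}/V$, which express everything in terms of $U_0/V=T^{-1}$, so $C\cap D_+(V)\cong\AA^1=\Spec K[T^{-1}]$. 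Both charts have dimension one, hence $\dim C=1$ and the cone $\Spec S/(f_1,\dots,f_n)$ has dimension $2=(n+2)-n$. Since $S$ is Cohen--Macaulay, any $n$ homogeneous elements whose quotient has dimension $(n+2)-n$ form a regular sequence; this proves the first assertion, and in particular exhibits $X\subset\PP^{n+1}$ as a complete intersection.

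Being a complete intersection of $n$ hypersurfaces of degree $p$, the curve $X\subset\PP^{n+1}$ has degree $p\cdots p=p^n$ by Bézout (one can also read this off from $\operatorname{div}_X(Z)=p^n\cdot[x]$, using $Z/V=(T^{-1})^{p^n}$ on the chart $\Spec K[T^{-1}]$). Moreover a complete intersection is local complete intersection, so with $\PP^{n+1}$ smooth the adjunction formula applies: since $N_{X/\PP^{n+1}}\cong\O_X(p)^{\oplus n}$, we get $\omega_X\cong\omega_{\PP^{n+1}}|_X\otimes\det N_{X/\PP^{n+1}}\cong\O_X(-(n+2))\otimes\O_X(np)=\O_X(np-n-2)$.

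For the tangent sheaf I would compare transition cocycles on the open cover $X=U_0\cup U_1$, where $U_0=D_+(Z)\cap X=\Spec K[T^\Gamma]$ and $U_1=D_+(V)\cap X=\Spec K[T^{-1}]$. By the proof of Corollary~\ref{tangent sheaf}, the sheaf $\Omega^1_{X/K}/\Torsion$ is trivialized by $dT^{p^n-1}$ on $U_0$ and by $dT^{-1}$ on $U_1$, with transition cocycle $T^{p^n}\in\Gamma(U_0\cap U_1,\O_X^\times)$; on the other hand $\O_{\PP^{n+1}}(1)|_X$ is trivialized by $Z$ on $U_0$ and by $V$ on $U_1$, with transition cocycle $Z/V=T^{-p^n}$. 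These two cocycles are mutually inverse in $\Pic(X)$, whence $\Omega^1_{X/K}/\Torsion\cong\O_X(-1)$, and therefore $\Theta_{X/K}=\uHom(\Omega^1_{X/K},\O_X)=(\Omega^1_{X/K}/\Torsion)^\vee\cong\O_X(1)$. As $\O_X(1)$ is the restriction of the very ample sheaf $\O_{\PP^{n+1}}(1)$ along the closed immersion $X\hookrightarrow\PP^{n+1}$, it is very ample; and combining with the previous paragraph, $\omega_X=\O_X(np-n-2)=\Theta_{X/K}^{\otimes r}$ with $r=np-n-2$.

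The only genuinely delicate step is the cocycle comparison: one must keep the trivializing cover, the chosen local generators, and the direction of the transition functions consistent between Corollary~\ref{tangent sheaf} and the standard charts of $\O_{\PP^{n+1}}(1)$, since it is precisely the sign of the exponent $p^n$ that decides whether $\Theta_{X/K}$ equals $\O_X(1)$ or $\O_X(-1)$. A mere degree count does not settle this, because $X$ has positive arithmetic genus in all but the smallest cases. The remaining ingredients --- the dimension computation underlying the regular sequence, Bézout, and adjunction for a local complete intersection --- are routine.
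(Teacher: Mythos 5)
Your proof is correct and follows essentially the same route as the paper: the regular sequence via the Cohen--Macaulay dimension count, the dualizing sheaf via adjunction, and the identification $\Theta_{X/K}=\O_X(1)$ by matching the cocycle $T^{-p^n}$ from Corollary~\ref{tangent sheaf} with the cocycle $Z/V$ of $\O_{\PP^{n+1}}(1)$ on the cover $D_+(Z)\cup D_+(V)$. The only (harmless) deviation is the degree computation, where you use B\'ezout, or the hyperplane $Z=0$ meeting $X$ at the smooth point at infinity with multiplicity $p^n$, whereas the paper intersects with $V=0$ at the singular point and counts the length of $K[x_0,\dots,x_{n-1}]/(x_0^p,\dots,x_{n-1}^p)$.
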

 
\begin{proof}
Let $\ideala$ be the ideal generated by the $n$ homogeneous polynomials \eqref{homogeneous equations} inside the $(n+2)$-dimensional Cohen--Macaulay ring $A=K[U_0,\ldots,U_{n-1}, V,Z]$.  Since the scheme $C$ is one-dimensional, we must have $\dim(A/\ideala)=2$.  It follows from \cite[Tag 02JN]{SP} that the polynomials in question form a regular sequence.  The assertion on the dualizing sheaf immediately follows from $\omega_{\PP^{n+1}}=\O_{\PP^{n+1}}(-n-2)$ and the adjunction formula.

The intersection of $C\subset\PP^{n+1}$ with the hyperplane given by $V=0$ is a singleton, with generators $U_j/Z$ and relations $(U_j/Z)^p=0$ for $0\leq j\leq {n-1}$ in the homogeneous coordinate ring.  Thus $\deg(C)=p^n$.

It remains to verify the statement on the tangent sheaf.  As described in the last paragraph of the proof for Corollary~\ref{tangent sheaf}, the invertible sheaf $\Theta_{X/K}$ is given by the cocycle $T^{-p^n}$ with respect to the open covering $W_0= \Spec K[T^\Gamma]$ and $W_1=\Spec K[T^{-1}]$.  The latter correspond to the open sets $D_+(Z)$ and $D_+(V)$. On the union of these open sets, the invertible sheaf $\O_{\PP^n}(1)$ is defined by the cocycle $Z/V$. This becomes $T^{-p^n}$ after restricting to $C$, and thus $\Theta_{X/k}=\O_X(1)$.
\end{proof}

Recall that a square root for the dualizing sheaf is called a \emph{theta characteristic} or \emph{spin structure} (\textit{cf.} \cite{Atiyah 1971} and \cite{Mumford 1971}).  In our situation, the curve $X$ comes with what one might call an \emph{$r$-fold} theta characteristic or spin structure.

Another highly relevant consequence: the very ample sheaf $\O_X(1)=\Theta_{X/K}$ has an intrinsic meaning, and $\lieg=H^0(X,\O_X(1))$ becomes the Lie algebra for the automorphism group scheme $G=\Aut_{X/K}$. To exploit this, we check that the closed embedding $X\subset \PP^{n+1}$ is defined by the complete linear system.

\begin{proposition}
\label{linear system}
The restriction map $H^0(\PP^{n+1},\O_{\PP^{n+1}}(1))\ra H^0(X,\O_X(1))$ is bijective.  In particular, $h^0(\Theta_{X/K})=n+2$.
\end{proposition}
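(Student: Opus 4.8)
The plan is to analyze the short exact sequence of sheaves on $\PP^{n+1}$
\begin{equation*}
0\lra\shI_X(1)\lra\O_{\PP^{n+1}}(1)\lra\O_X(1)\lra 0,
\end{equation*}
where $\shI_X$ denotes the ideal sheaf of the closed subscheme $X\subset\PP^{n+1}$. Taking the long exact cohomology sequence and using $H^1(\PP^{n+1},\O_{\PP^{n+1}}(1))=0$, the bijectivity of the restriction map becomes equivalent to the two vanishings $H^0(\PP^{n+1},\shI_X(1))=0$ and $H^1(\PP^{n+1},\shI_X(1))=0$, and I would reduce everything to these.

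Both vanishings should follow from the complete intersection property established in Proposition~\ref{tangent sheaf very ample}. Let $\idealb\subset S=K[U_0,\ldots,U_{n-1},V,Z]$ be the homogeneous ideal generated by the $n$ forms \eqref{homogeneous equations}, so that $X=V(\idealb)$ and $\shI_X=\widetilde{\idealb}$. Since these forms constitute a regular sequence, the graded ring $S/\idealb$ is Cohen--Macaulay of Krull dimension two, hence arithmetically Cohen--Macaulay; from this I would deduce that $\idealb$ is saturated, that $H^0(\PP^{n+1},\shI_X(d))=\idealb_d$ for every $d\in\ZZ$, and that $H^1(\PP^{n+1},\shI_X(d))=0$ for every $d\in\ZZ$ (either by invoking projective normality of complete intersections directly, or by the local-to-global translation of the depth bound $\depth(S/\idealb)\geq 2$). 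As the generators of $\idealb$ all have degree $p\geq 2$, one has $\idealb_1=0$, whence $H^0(\PP^{n+1},\shI_X(1))=0$; and $H^1(\PP^{n+1},\shI_X(1))=0$ is a special case. This yields bijectivity. For the numerical claim I would then note $h^0(X,\O_X(1))=h^0(\PP^{n+1},\O_{\PP^{n+1}}(1))=n+2$ and invoke the identification $\O_X(1)=\Theta_{X/K}$ from Proposition~\ref{tangent sheaf very ample}.

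I do not expect a genuine obstacle here. The only point needing a little care is the passage from the Cohen--Macaulay statement to the cohomology vanishing: if one wishes to be fully explicit rather than cite the arithmetically Cohen--Macaulay property, one can resolve $\shI_X$ by the Koszul complex on the $n$ forms of degree $p$, whose terms are direct sums of twists $\O_{\PP^{n+1}}(-kp)$ with $1\leq k\leq n$, split it into short exact sequences, and use that the intermediate cohomology $H^i(\PP^{n+1},\O_{\PP^{n+1}}(d))$ vanishes for $0<i<n+1$ to see that $H^1(\PP^{n+1},\shI_X(1))$ embeds into $H^n(\PP^{n+1},\O_{\PP^{n+1}}(1-np))=0$; similarly $H^0(\PP^{n+1},\shI_X(1))=0$ amounts to the non-degeneracy of $X$ in $\PP^{n+1}$, i.e. $\idealb_1=0$. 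The case $n=0$ is trivial, as then $X=\PP^{n+1}$ and $\shI_X=0$.
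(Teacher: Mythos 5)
Your argument is correct, but it takes a genuinely different route from the paper. You prove surjectivity via the ideal-sheaf sequence $0\ra\shI_X(1)\ra\O_{\PP^{n+1}}(1)\ra\O_X(1)\ra 0$, using that the $n$ degree-$p$ forms are a regular sequence (Proposition~\ref{tangent sheaf very ample}), so that $S/\idealb$ is arithmetically Cohen--Macaulay, $\idealb$ is saturated, and $H^1(\PP^{n+1},\shI_X(d))=0$ for all $d$; your explicit Koszul chase, embedding $H^1(\shI_X(1))$ into $H^n(\PP^{n+1},\O(1-np))=0$ via vanishing of intermediate cohomology, is a sound substitute for the ACM citation, and the saturation point (needed to identify $H^0(\shI_X(1))$ with $\idealb_1=0$) is correctly addressed. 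The paper instead proves only injectivity from $\idealb_1=0$ and then computes $h^0(X,\Theta_{X/K})$ directly: it writes the sections of a line bundle of degree $m\leq c-1$ via the conductor square for the normalization $\PP^1\ra X$ as the span of monomials $T^a$ with $a\in\Gamma_{p,n}$, $a\leq m$, and for $m=p^n$ checks combinatorially (under $n(p-1)\geq 3$) that this set is exactly $\{0\}$ together with the $n+1$ generators, so $h^0=n+2$ and bijectivity follows by dimension count. Your approach is the more standard and more general one (it needs nothing beyond the regular-sequence property and in fact yields surjectivity of $S_d\ra H^0(\O_X(d))$ in every degree), while the paper's stays inside its numerical-semigroup toolkit, gives $h^0$ of any line bundle of degree $\leq c-1$ by counting semigroup elements, and avoids saturation and Koszul machinery at the cost of a small combinatorial verification.
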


\begin{proof}
Since the defining polynomials \eqref{homogeneous equations} have degree $p\geq 2$, the homogeneous ideal for $X\subset\PP^{n+1}$ contains no linear terms. It follows that the map in question is injective.  It remains to compute $h^0(\shL)$ for $\shL=\Theta_{X/K}$.

Let us proceed with some general considerations on invertible sheaves $\shL$ on $X$ of arbitrary degree $m\geq 0$.  Recall that the conductor loci for the normalization map $f\colon \PP^1\ra X$ are given by
$$
\Gamma(\O_A)= K[T]\,/(T^c)\quadand \Gamma(\O_B)=K[T^\Gamma]\,/(T^c,T^{c+1},\ldots),
$$
where $c\geq 0$ is the conductor for the numerical semigroup $\Gamma$. From the cocartesian diagram \eqref{conductor square}, we now obtain an exact sequence
$$
0\lra H^0(\shL)\lra \Gamma(\shL_{\PP^1})\oplus \Gamma(\shL_B)\lra \Gamma(\shL_A)\lra H^1(\shL)\lra 0.
$$
It is not difficult to determine the map in the middle: Making the identification $\shL_{\PP^1} =\O_{\PP^1}(m)$ and $\shL_B=\O_B$ and $\shL_A=\O_A$, we get
$$
\Gamma(\shL_{\PP^1})=\bigoplus_{a=0,\ldots,m}KT^a,\quad \Gamma(\shL_B)=\bigoplus_{a\in\Gamma, a\leq c-1}KT^a,\quadand 
\Gamma(\shL_A)= \bigoplus_{a=0,\ldots,c-1}KT^a.
$$
If $m\leq c-1$, the former groups are contained in the latter, and $H^0(\shL)$ becomes their intersection, and hence $h^0(\shL)=\Card(S)$ for the set
$$
S=\{a\in\Gamma\mid \text{$a\leq m$ and $a\leq c-1$}\} = \{a\in\Gamma\mid \text{$a\leq m$}\}.
$$
We have to determine this set for $m=p^n$, under the assumption $n(p-1)\geq 3$.  According to Proposition~\ref{conductor}, the conductor is $c=np^{n+1}-(n+2)p^n+2$, and thus $c/p^n = np-(n+2) +2/p^n > n(p-1) -2\geq 1$.  So our set $S$ comprises all $a\in \Gamma$ with $a\leq p^n$.  It clearly contains the generators $p^n,p^n-1,\ldots,p^n-p^{n-1}$ and also the zero element $a=0$.  It remains to check that for each pair of generators $a\leq b$, we have $a+b\geq p^n$. This is obvious for $b=p^n$, so assume $a=p^n-p^i$ and $b=p^n-p^j$ with $0\leq i\leq j<n$. Then $a+b-p^n=p^n-p^i-p^j \geq p^n-2p^j\geq p^n-p^{j+1}\geq 0$.
\end{proof}

This leads to a matrix interpretation of the full automorphism group scheme $G=\Aut_{X/K}$: First note that the diagonal action of $G$ on $X\times X$, and its effects on graphs, induces the conjugacy action of $G$ on itself. Its restriction to the first infinitesimal neighborhood of the diagonal $\Delta_X$ yields the $G$-linearization of the tangent sheaf $\Theta_{X/k}$, and we infer that the resulting representation on the Lie algebra $\lieg=H^0(X,\Theta_{X/K})$ coincides with the adjoint representation $G\ra\GL(\lieg)$.  Its projectivization $G\ra\PGL(\lieg)$ is injective because $\Theta_{X/k}$ is very ample, and it follows that $G\ra\GL(\lieg)$ is injective as well.  We thus have a canonical inclusion $G\subset\GL(\lieg)$ that intersects the center $\GG_m\subset\GL(\lieg)$ trivially.  Write $G\cdot\GG_m=G\times\GG_m$ for the resulting subgroup scheme and $\ideala_p\subset\Sym^p(\lieg)$ for the vector subspace generated by the homogeneous polynomials \eqref{homogeneous equations}.

\begin{proposition}
\label{stabilizer group scheme}
We keep the notation as above. Then $G\cdot\GG_m\subset \GL(\lieg)$ equals the stabilizer group scheme for the vector subspace $\ideala_p\subset \Sym^p(\lieg)$.
\end{proposition}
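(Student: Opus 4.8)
The plan is to show that both sides coincide with the stabilizer group scheme. Write $f_1,\dots,f_n$ for the homogeneous forms \eqref{homogeneous equations}, which span $\ideala_p\subset\Sym^p(\lieg)$, and let $H\subset\GL(\lieg)$ denote the (closed) stabilizer group scheme of the subspace $\ideala_p$; the claim is $H=G\cdot\GG_m$. The decisive preliminary observation, which I would record first, is that $f_1,\dots,f_n$ form a regular sequence in $A=\Sym(\lieg)=K[U_0,\dots,U_{n-1},V,Z]$ by Proposition~\ref{tangent sheaf very ample}. Hence $A/(f_1,\dots,f_n)$ is Cohen--Macaulay of dimension $2$, so the ideal $(f_1,\dots,f_n)$ is saturated and therefore coincides with the full homogeneous ideal $I(X)$ of $X\subset\PP(\lieg)=\Proj A$. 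In particular $I(X)$ is generated in degree $p$, so $I(X)_p=\ideala_p$, while $I(X)_1=0$ by Proposition~\ref{linear system}, so $X$ is nondegenerate in $\PP(\lieg)$.

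Next I would check the easy inclusion $G\cdot\GG_m\subseteq H$. The central subgroup $\GG_m\subset\GL(\lieg)$ acts on $\Sym^p(\lieg)$ through $t\mapsto t^p$ and thus stabilizes every linear subspace, in particular $\ideala_p$. For $G$ itself, recall from the discussion preceding the proposition that the embedding $X\subset\PP(\lieg)$ arises from the complete linear system of the canonically $G$-linearized very ample sheaf $\Theta_{X/K}=\O_X(1)$, with $\lieg=H^0(X,\Theta_{X/K})$, and is therefore $G$-equivariant. Consequently each $g\in G(R)$ carries $X_R$ to itself inside $\PP(\lieg_R)$, hence preserves $I(X)\otimes R$ and its degree-$p$ component $\ideala_p\otimes R$. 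Thus $G\subseteq H$, and as $H$ is a subgroup scheme, $G\cdot\GG_m\subseteq H$.

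For the reverse inclusion I would build a retraction $\rho\colon H\to G$ of the inclusion $G\subset H$. Given $\phi\in H(R)$, stability of $\ideala_p\otimes R$ implies stability of the ideal it generates, namely $I(X)\otimes R$, so $\phi$ descends to a graded $R$-algebra automorphism of $(A/I(X))\otimes R$ and hence to an $R$-automorphism $\rho(\phi)$ of $X_R=\Proj$; this construction is functorial in $R$ and multiplicative, so by representability of $\Aut_{X/K}=G$ it defines a homomorphism of group schemes $\rho\colon H\to G$. Unwinding definitions, for $g\in G(R)$ the automorphism of $\PP(\lieg_R)$ induced by $g\in H(R)$ restricts on $X_R$ to $g$, so $\rho$ retracts $G\subset H$. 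If $\phi\in\Kernel(\rho)(R)$, then $\phi$ acts trivially on $X_R\subset\PP(\lieg_R)$; since $X$ is nondegenerate and $h^0(\O_X)=1$, this forces $\phi$ to act on $\lieg=H^0(X,\O_X(1))$ by a scalar, i.e. $\phi\in\GG_m(R)$, whence $\Kernel(\rho)=\GG_m$ (the other inclusion being clear).

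Finally I would assemble these: for arbitrary $\phi\in H(R)$, set $g=\rho(\phi)\in G(R)$ and let $\tilde g\in H(R)$ be its image under the section $G\subset H$; then $\phi\tilde g^{-1}\in\Kernel(\rho)(R)=\GG_m(R)$, so $\phi\in(G\cdot\GG_m)(R)$ because $\GG_m$ is central. Hence $H(R)=(G\cdot\GG_m)(R)$ for every $K$-algebra $R$, and therefore $H=G\cdot\GG_m$. I expect the only genuinely delicate point to be the identification of $I(X)$ with $(f_1,\dots,f_n)$ as the saturated homogeneous ideal, since it is precisely this that converts ``$\phi$ stabilizes the span $\ideala_p$ of the equations'' into ``$\phi$ stabilizes the closed subscheme $X$''; this is where the global complete intersection property of Proposition~\ref{tangent sheaf very ample} is indispensable, and everything else is bookkeeping with linearizations and functors of points.
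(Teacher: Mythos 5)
Your proposal is correct and follows essentially the same route as the paper: both hinge on identifying the ideal generated by the degree-$p$ equations with the full homogeneous prime ideal of $X$ via the regular-sequence property (you phrase it through Cohen--Macaulayness and saturation, the paper through a localization argument excluding embedded components), then deduce one inclusion from the equivariance of the embedding by the complete linear system of the linearized sheaf $\Theta_{X/K}$ and the other from faithfulness of the stabilizer's action on $X$ modulo scalars, which is exactly what Proposition~\ref{linear system} supplies. Your explicit retraction $\rho\colon H\to G$ with kernel $\GG_m$ is just a more detailed packaging of the paper's statement that the induced action on $X$ is faithful modulo $\GG_m$.
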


\begin{proof}
We start with some observations on the homogeneous coordinate rings
$$
\Gamma_\bullet(\O_{\PP^{n+1}})  = \Sym^\bullet(\lieg)\quadand \Gamma_\bullet(\O_X) =\bigoplus_{n\geq 0} \Gamma(X,\O_X(n)).
$$
Both rings are integral, so the kernel $\primid$ of the canonical map $\Gamma_\bullet(\O_{\PP^{n+1}})\ra \Gamma_\bullet(\O_X)$ is a prime ideal, which equals the radical for the ideal $\ideala$ generated by the polynomials \eqref{homogeneous equations}.  The ideal $\ideala$ becomes prime when localized with respect to any homogeneous $f\in\Sym^+(\lieg)$ because $X$ is integral.  Since our generators form a regular sequence, this actually holds everywhere, and thus $\ideala=\primid$.

Write $I\subset \GL(\lieg)$ for the stabilizer group scheme in question. It contains $\GG_m$ because the polynomials are homogeneous, and its action on $\PP^{n+1}$ stabilizes the curve $X$. Modulo $\GG_m$, the induced action on $X$ is faithful, according to Proposition~\ref{linear system}. Thus $I\subset G\cdot \GG_m$.  Conversely, let $f\in G(R)$ be some $R$-valued automorphism of $X$.  It induces an action on the homogeneous coordinate rings $\Gamma_\bullet(\O_{\PP^{n+1}}\otimes R)=\Gamma_\bullet(\O_{\PP^{n+1}})\otimes R$, and likewise for $\Gamma_\bullet(\O_X)$. These actions are compatible; thus $f$ stabilizes $\primid_p\otimes R=\ideala_p\otimes R$.
\end{proof}

For $n=1$, this means that $G\cdot \GG_m\subset\GL_3$ is the stabilizer group scheme for the line generated by $U^p-V^{p-1}Z$ in the $\supth{p}$ symmetric power of $\lieg=KU \oplus KV\oplus KZ$.  In turn, $G\subset\PGL_3$ is the inertia group scheme for the rational point corresponding to this line.

\section{The automorphism group scheme}
\label{Automorphism group}

Recall that our curves $X=X_{p,n}$ come  with an  inclusion
\begin{equation}
\label{inclusion}
\GG_a\rtimes U_n\rtimes \GG_m\subset\Aut_{X/K}
\end{equation}
of  group schemes. The following is one of  the main results of this paper.

\begin{theorem}
\label{automorphism group scheme}
The above inclusion of group schemes is an equality provided $p^n\geq 3$.
\end{theorem}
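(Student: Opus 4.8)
The plan is to read off $G=\Aut_{X/K}$ from the intrinsic projective model via the matrix description of Proposition~\ref{stabilizer group scheme}. By Propositions~\ref{tangent sheaf very ample} and~\ref{linear system}, $\Theta_{X/K}=\O_X(1)$ is very ample and identifies $X$ with the complete intersection \eqref{homogeneous equations} in $\PP(\lieg)=\PP^{n+1}$, where $\lieg=H^0(X,\Theta_{X/K})$ has basis $U_0,\dots,U_{n-1},V,Z$; and by Proposition~\ref{stabilizer group scheme} the subgroup scheme $G\cdot\GG_m\subset\GL(\lieg)$ — with $\GG_m$ the homotheties — is the stabilizer of the $n$-dimensional subspace $\ideala_p\subset\Sym^p(\lieg)$ spanned by the forms \eqref{homogeneous equations}. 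Since $G\cap\GG_m=1$, it suffices to prove $\operatorname{Stab}_{\GL(\lieg)}(\ideala_p)\subseteq(\GG_a\rtimes U_n\rtimes\GG_m)\cdot\GG_m$; the reverse inclusion follows from \eqref{inclusion} together with the fact that homotheties stabilize every subspace, and dividing out the central $\GG_m$ then yields the equality $G=\GG_a\rtimes U_n\rtimes\GG_m$. So I would fix a $K$-algebra $R$ and an element $g\in\operatorname{Stab}_{\GL(\lieg)}(\ideala_p)(R)$; by the proposition, $g$ is a homothety composed with an $R$-automorphism of $X\otimes R$, and in particular $g$ preserves $X\otimes R$.

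The first step is to exhibit a complete flag in $\lieg$ that $g$ must respect. Since $p^n\geq3$, the multiplicity $e_{p,n}=p^{n-1}(p-1)$ of $\Gamma_{p,n}$ is at least $2$, so $\Gamma_{p,n}\subsetneq\NN$ and $x_0\in X$ is the unique non-smooth point of $X/K$ (compare Proposition~\ref{conductor}); hence $g$ preserves $x_0\otimes R$, and with it the canonical filtration of $\lieg\otimes R=H^0(X\otimes R,\O_X(1))$ by order of vanishing at the unibranch point $x_0$. On the affine chart $\Spec K[T^{\Gamma_{p,n}}]$, on which $\O_X(1)$ is trivialized by $Z$, the sections $Z,U_0,\dots,U_{n-1},V$ vanish at $x_0$ to the orders $0$, $p^n-1$, $p^n-p$, $\dots$, $p^n-p^{n-1}$, $p^n$, which are pairwise distinct; hence the filtration is the complete flag
\[
0\subsetneq\langle V\rangle\subsetneq\langle V,U_0\rangle\subsetneq\langle V,U_0,U_1\rangle\subsetneq\dots\subsetneq\langle V,U_0,\dots,U_{n-1}\rangle\subsetneq\lieg ,
\]
so $g$ is upper triangular with respect to the ordered basis $(V,U_0,U_1,\dots,U_{n-1},Z)$.

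The second step is to impose on such an upper-triangular $g$ the membership relations $g(Q)\in\ideala_p\otimes R$ as $Q$ ranges over the generators \eqref{homogeneous equations}, and to solve for the matrix entries. One expands the $p$-th powers by $\bigl(\sum a_ie_i\bigr)^p=\sum a_i^pe_i^p$, and then, working through the generators in the order $Q_{n-1},Q_{n-2},\dots,Q_0$, compares the coefficients of the monomials $V^p$, $V^{p-1}U_i$, $V^{p-1}Z$ and $U_j^p$ against the shape $\sum_j\beta_jU_j^p-V^{p-1}\bigl(\beta_0U_1+\dots+\beta_{n-2}U_{n-1}+\beta_{n-1}Z\bigr)$ of a general element of $\ideala_p$. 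This forces the diagonal of $g$ to be a homothety times a point of the multiplicative group, a chain of off-diagonal entries to vanish, and the surviving off-diagonal entries to be nilpotent subject to exactly the relations $\lambda_i^{p^{n-i+1}}=0$ defining a point of $U_n$, together with one free translation parameter. In other words, the solution locus is precisely the image of the parametrization of $(\GG_a\rtimes U_n\rtimes\GG_m)\cdot\GG_m$, which is the inclusion we wanted.

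I expect the second step to be the main obstacle. It is an explicit but intricate computation that, since we are comparing group schemes and not merely points over a field, must be carried out over an arbitrary test ring $R$: one has to keep track of the Frobenius twists $\lambda\mapsto\lambda^{p^i}$ produced by the $p$-th powers and watch the nilpotency conditions $\lambda_i^{p^{n-i+1}}=0$ emerge from the degree bounds in \eqref{homogeneous equations}. A secondary technical point is the first step, where one must check that the vanishing-order filtration at the unibranch point $x_0$ is canonical and remains so after the flat base change $\Spec R\to\Spec K$; alternatively, the flag can be produced from $\ideala_p$ directly by reducing modulo the Frobenius image $\{\ell^p\mid\ell\in\lieg\}\subset\Sym^p(\lieg)$ and then modulo $\langle V\rangle$, at the cost of treating $p=2$ (where $p^n\geq3$ forces $n\geq2$) separately. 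Note finally that the hypothesis $p^n\geq3$ is exactly what makes $x_0$ a genuine singular point with $\Gamma_{p,n}\neq\NN$; without it the flag collapses and the statement fails, since then $X$ is $\PP^1$.
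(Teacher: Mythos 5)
There is a genuine gap, and it is in your first step. The filtration of $\lieg=H^0(X,\Theta_{X/K})$ by order of vanishing at $x_0$ is computed on the normalization $\PP^1$, but the infinitesimal automorphisms of $X$ do \emph{not} lift to the normalization (the substitution $T\mapsto T\bigl(1+\sum\lambda_iT^{1-p^i}\bigr)^{-1}$ does not preserve $K[T]$), so this filtration is not equivariant for scheme--valued points. Concretely, for $n=2$ one computes $T^{p^2-1}\mapsto T^{p^2-1}+\lambda_1T^{p^2-p}+\lambda_2$, i.e.\ the element $1+\lambda_1F+\lambda_2F^2$ of $U_2(R)$ sends the section $U_0$ to $U_0+\lambda_1U_1+\lambda_2Z$ (up to sign/inversion of conventions). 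Hence already the \emph{known} subgroup $U_n\subset G$ fails to preserve your subspace $\langle V,U_0\rangle$ for $n\geq2$: your flag is ordered by decreasing vanishing order, whereas the $U_n$-action moves sections of high vanishing order into sections of lower vanishing order, precisely because the valuation upstairs is not an invariant of the singular curve. So $g$ is not upper triangular with respect to $(V,U_0,\dots,U_{n-1},Z)$, and the coefficient comparison in your second step is carried out against a matrix shape that the answer does not have. Even in the one case where the flag claim happens to be true ($n=1$, where the preserved flag is $\langle V\rangle\subset\langle V,U_0\rangle$), establishing it for $R$-valued points is exactly the hard content: in the paper's $n=1$ computation this is the statement $b=0$, which is derived by a separate argument using Lemma~\ref{inclusion bijective} (nilpotence of the off-diagonal entry), a flat extension, and a normalizing left multiplication --- none of which your vanishing-order heuristic supplies.

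Beyond this, your second step is only described, not carried out, and I would not expect it to close in the form stated: the general element of $\ideala_p$ and the Frobenius-twisted coefficients produce a system whose analysis for all $n$ is the whole difficulty. The paper avoids this by doing the explicit matrix computation only for $n=1$ and then inducting on $n$: using $X_{p,n-1}=\Bl_{x_0}(X_{p,n})$ (Lemma~\ref{blowing-up}) and equivariance of blow-ups for the inertia group scheme $I\subset G$ at $x_0$, the induction hypothesis forces $I=\GG_a\rtimes U_{n-1}\rtimes\GG_m$, and then the orbit $G/I$ is pinned down inside $\Sing(X/K)$ by a degree count $h^0(\O_{G/I})\leq p^n=h^0(\O_{U_n/U_{n-1}})$. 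If you want to salvage a direct matrix proof, you would at minimum need to replace your flag by one that is actually $G$-stable (and prove that stability for the \emph{a priori} larger $G$, not just for the known subgroup), which is essentially equivalent to the theorem itself.
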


The cases $p^n\leq 2$ indeed have to be excluded because then $X=\PP^1$. Also note that for $3\leq p^n\leq 4$, our curve $X$ becomes the rational cuspidal curve, and the assertion was established by Bombieri and Mumford \cite[Proposition~6]{Bombieri; Mumford 1976}.  The proof for the above theorem requires some preparation and will be given stepwise. We start with a simple observation.

\begin{lemma}
\label{inclusion bijective}
For $p^n\geq 3$, the ideal for the closed embedding \eqref{inclusion} is nilpotent.
\end{lemma}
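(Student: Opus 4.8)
Here is a plan for the proof of Lemma~\ref{inclusion bijective}.

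The plan is to show that the closed immersion $\GG_a\rtimes U_n\rtimes\GG_m\hookrightarrow\Aut_{X/K}$ is surjective on points; since $\Aut_{X/K}$ is Noetherian (it is an open subscheme of the Hilbert scheme of the projective surface $X\times_K X$), a surjective closed immersion is cut out by an ideal contained in the nilradical, hence by a nilpotent ideal. Nilpotence may be checked after the faithfully flat base change to an algebraic closure $\bar K$, and $\Aut_{X/K}\otimes_K\bar K=\Aut_{X\otimes_K\bar K/\bar K}$, so I may assume $K$ algebraically closed. Write $G=\Aut_{X/K}$, let $G'=\GG_a\rtimes U_n\rtimes\GG_m$, and let $H=G_{\red}$, which over the perfect field $K$ is a smooth closed subgroup scheme.

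I first reduce to the claim $H\cong\GG_a\rtimes\GG_m$. The underlying space of $G'$ is that of $\GG_a\times U_n\times\GG_m$, and as $U_n$ is infinitesimal this is $|\GG_a\times\GG_m|$, which is integral of dimension $2$; also $|G|=|H|$. So if $H\cong\GG_a\rtimes\GG_m$, then $|G'|$ is a closed subset of full dimension $2$ in the integral finite-type scheme $|G|$, hence $|G'|=|G|$, which is what is needed. The inclusion $\GG_a\rtimes\GG_m\subseteq H$ is immediate: the nilpotents of $\O_{G'}$ all come from the $U_n$-factor, so $(G')_{\red}$ is the closed subgroup scheme $\GG_a\rtimes\GG_m\subseteq G'$, and being reduced it lies in $H$; in particular $\dim H\geq 2$.

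The substantive step is $\dim H\leq 2$. Since $p^n\geq 3$ we have $\Gamma_{p,n}\neq\NN$ and $\edim(\O_{X,x_0})\geq 2$, so $X$ has exactly one singular point $x_0$ (the chart $\Spec K[T^{-1}]$ is smooth, and $\Spec K[T^{\Gamma_{p,n}}]$ is smooth away from the origin). Its normalization $f\colon\PP^1\to X$ has $f^{-1}(x_0)=\Spec(K[T]/(T^{e}))$ with $e$ the multiplicity, a one-point scheme; so the singularity is unibranch and $f$ is a homeomorphism. Because $H$ is smooth, $H\times\PP^1$ is the normalization of $H\times X$, so the $H$-action on $X$ lifts uniquely to an action on $\PP^1$, giving a homomorphism $H\to\Aut_{\PP^1/K}=\PGL_2$ with trivial kernel: an element acting trivially on $\PP^1$ acts trivially on $f_*\O_{\PP^1}\supseteq\O_X$, hence trivially on $X$, hence is trivial because $G$ acts faithfully on $X$. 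Every automorphism of $X$ fixes the point $x_0$, so the image of $H$ in $\PGL_2$ fixes the rational point $f^{-1}(x_0)_{\red}$ and therefore lies in a Borel subgroup $B$, of dimension $2$. Thus $\dim H\leq 2$.

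Combining, the closed immersion $H\hookrightarrow B$ has $\dim H\geq 2$ (since $\GG_a\rtimes\GG_m\subseteq H$) and $\dim H\leq\dim B=2$; as $B$ is connected and both are integral of the same dimension over $K$, this forces $H=B\cong\GG_a\rtimes\GG_m$, and in particular $H$ is connected. This yields the reduction of the second paragraph and finishes the proof. The main obstacle is the bound $\dim H\leq 2$: it rests on the two geometric facts that $x_0$ is the unique singular point of $X$ and that this singularity is unibranch, so that $H$ fixes a single rational point of $\PP^1$ and hence embeds in a Borel subgroup of dimension exactly $2$, together with the fact that normalization commutes with the smooth base change $H\to\Spec K$, which is what permits lifting the $H$-action from $X$ to $\PP^1$.
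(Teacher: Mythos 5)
Your argument is correct and follows essentially the same route as the paper's proof: reduce to $K$ algebraically closed, use that every automorphism fixes the unique singular point $x_0$, lift the action of the smooth (reduced) part of the automorphism group scheme to the normalization $\PP^1$, and observe that it lands in the Borel subgroup $\GG_a\rtimes\GG_m\subset\PGL_2$ stabilizing the point over $x_0$ --- the paper phrases this as a pointwise contradiction citing Brion's lifting result, while you re-derive the lifting from smoothness of the base change and conclude by a dimension count on $H=G_{\red}$. Two small points to tighten: $\Aut_{X/K}$ is a priori only locally of finite type (noetherianness, hence nilpotence rather than mere local nilpotence of the ideal, follows once the equality of underlying spaces is established), and the integrality/connectedness of $H$ in your final step should be deduced from the trivial-kernel (closed immersion) fact rather than asserted, although both repairs are immediate from what you already prove.
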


\begin{proof}
For this, we may assume that $K$ is algebraically closed.  Seeking a contradiction, we suppose that there is an automorphism $\varphi\colon X\ra X$ that does not yield a rational point in $\GG_a\rtimes\GG_m$. The assumption ensures $\Sing(X)=\{x_0\}$, and $\varphi$ fixes this singular point.  Hence the induced automorphism on the normalization $\PP^1=\Spec K[T]\cup\Spec K[T^{-1}]$ fixes the point defined by $T=0$.  It thus belongs to the inertia group scheme $\GG_a\rtimes\GG_m$ inside $\PGL_2$.  According to \cite[Proposition~2.5.1]{Brion 2017}, the action of any smooth group scheme on $X$ lifts to an action on the normalization.  Thus $\varphi$ belongs to $\GG_a\rtimes\GG_m$ inside $\Aut_{X/k}$, giving a contradiction.
\end{proof}
 
\begin{proof}[Proof of Theorem~\ref{automorphism group scheme} in the special case $n=1$]
In this situation we have $X=\Spec K[T^p,T^{p-1}]\cup\Spec K[T^{-1}]$.  According to Proposition~\ref{linear system}, the tangent sheaf has $h^0(\Theta_{X/k})=3$. One easily computes that the rational vector fields
\begin{equation}
\label{three vector fields}
T^{2-p}\frac{\partial}{\partial T}, \quad  T^2\frac{\partial}{\partial T},	\quadand  T\frac{\partial}{\partial T}
\end{equation}
are everywhere defined, hence form a basis of $\lieg=H^0(X,\Theta_{X/k})$.  Moreover, the first two basis vectors generate a restricted subalgebra $ K^2$, with trivial bracket and $p$-map, and the last basis vector yields a copy of $\gl_1(K)$, giving a semidirect product $K^2\rtimes\gl_1(K)$.  Such algebras play a prominent role in \cite{Schroeer 2007,Kondo; Schroeer 2021,Schroeer; Tziolas 2023}.  The bracket and $p$-map are given by $[(x,\lambda),(x',\lambda')]= \lambda x'-\lambda'x$ and $(x,\lambda)^{[p]} = (\lambda^px, \lambda^{p-1})$; \textit{cf.} \cite[Proposition~1.1]{Kondo; Schroeer 2021}. One sees that $K^2$ is the image of the bracket, thus the derived subalgebra. This also holds for $R$-valued points; hence $K^2$ is a subrepresentation for $G$.

As explained in Section~\ref{Projective model}, our curve $X$ may also be regarded as the curve in $\PP^2=\Proj K[U_0,V,Z]$ defined by the homogeneous polynomial $P=U_0^p-V^{p-1}Z$, with an identification via $T^{p-1}=U_0/Z$, $T^p=V/Z$, and $T^{-1}=U_0/V$. According to Proposition~\ref{linear system}, the monomials $Z,V,U_0$ yield a basis for $H^0(X,\O_X(1))$.  By Proposition~\ref{tangent sheaf very ample}, the invertible sheaves $\O_X(1)$ and $\Theta_X$ are isomorphic.  Computing the order of zeros for the homogeneous polynomials $Z,V,U_0$ and the vector fields \eqref{three vector fields} on $\PP^1$, one sees that each identification $\O_X(1)=\Theta_X$ sends the former basis to the latter basis, at least up to a diagonal base-change matrix.

Combining this with Proposition~\ref{stabilizer group scheme}, we have a functorial interpretation of the $R$-valued points of $H=G\cdot \GG_m$ as the group of matrices
$$
A=\begin{pmatrix}
a&b&c\\
d&e&f\\
0&0&g
\end{pmatrix}\in\GL_3(R)
$$
subject to the sole condition
\begin{equation}
\label{multiplier}
P(aU_0+ dV, bU_0+eV,cU_0+fV+gZ)=\lambda\cdot P(U_0,V,Z),
\end{equation}
with some multipliers $\lambda\in R^\times$. The zero entries in the matrix $A$ stem from the fact that the derived subalgebra $[\lieg,\lieg]\subset\lieg$ is a subrepresentation.

Suppose that $A\in H(R)$ has $b=0$. Comparing coefficients in \eqref{multiplier}, we get $ae^{p-1}=g^p$, $c^p=0$, and $de=f^p$, so the matrix takes the form
$$
A=\begin{pmatrix}
g^p e^{1-p}&0&c\\
f^pe^{-1}&e&f\\
0&0&g
\end{pmatrix} \quad \text{with $c^p=0$.}
$$
The group $H_0\subset H$ of such matrices contains the diagonal copy of $\GG_m$, and $H_0/\GG_m$ becomes our iterated semidirect product $\GG_a\rtimes\alpha_p\rtimes\GG_m$ inside $\Aut_{X/k}\subset\PGL_3$.  Note that the projection $H_0\ra H_0/\GG_m$ admits a splitting, obtained by setting $g=1$.

Seeking a contradiction, we assume that there is some $A\in H(R)$ with $b\neq 0$.  By Lemma~\ref{inclusion bijective}, we must have $b\in \Nil(R)$, and thus $a,e\in R^\times$. Making a flat extension of $R$, we can assume that there is some $f'\in R$ with $f'^p=-d/a$.  Setting $a'=e'=g'=1$ and $b'=c'=0$ and left-multiplying with the resulting matrix $A'\in H(R)$, we may assume that both $b\neq 0$ and $d=0$ hold. On the other hand, comparing coefficients in \eqref{multiplier} immediately yields $b=0$, giving a contradiction.

This establishes $H_0=H$. We already observed that $H/\GG_m=\Aut_{X/K}$ inside $\PGL_3$ and that $H_0/\GG_m$ equals our iterated semidirect product.
\end{proof}

To continue inductively, we seek to relate the curves $X_{p,n}$ with different indices $n$.  First recall a general fact on numerical semigroups $\Gamma=\langle b,a_2,\ldots,a_r\rangle$ with non-zero generators $b<a_2<\ldots<a_r$: the \emph{blowing-up} $\Bl_\maxid(A)$ of the ring $A= K[T^\Gamma]$ with respect to the maximal ideal $\maxid=(T^b,T^{a_2},\ldots,T^{a_r})$ has coordinate ring $A'=K[T^{\Gamma'}]$ for the numerical semigroup $\Gamma'=\langle b,a_2-b,\ldots,a_r-b\rangle$; \textit{cf.} \cite[Proposition I.2.1, and also Equation I.2.4]{Barucci; Dobbs; Fontana 1997}.

For our $\Gamma=\Gamma_{p,n}$ with $n\geq 1$, we have $b=p^n-p^{n-1}$, with remaining generators $p^n$ and $p^n-p^j$ for $0\leq j\leq n-2$.  The resulting differences are $p^n-b = p^{n-1}$ and $(p^n-p^j)-b = p^{n-1}-p^j$.  For $n\geq 2$, we write $b=p\cdot (p^{n-1}-p^{n-2})$ and in any case see $\Gamma'=\Gamma_{p,n-1}$.  This reveals the following.

\begin{lemma}
\label{blowing-up}
For $p^n\geq 3$, we have $X_{p,n-1}=\Bl_Z(X_{p,n})$, where the center $Z$ is the singular point $x_0\in X_{p,n}$ endowed with the reduced scheme structure.
\end{lemma}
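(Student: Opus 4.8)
The plan is to reduce the lemma to the combinatorial identity of numerical semigroups recorded just before it, using only the elementary fact that the formation of a blow-up commutes with restriction to open subschemes. First I would pin down the center: the singular point $x_0$ lies in the chart $\Spec K[T^{\Gamma_{p,n}}]$ (the complement $\Spec K[T^{-1}]=\AA^1$ being smooth, while $\edim(\O_{X,x_0})=d_{p,n}\geq 2$ by Proposition~\ref{conductor} and the ensuing formula), and there it is the $K$-rational point cut out by the maximal ideal $\maxid=(T^a\mid a\in\Gamma_{p,n},\ a>0)$. Since $\maxid$ is maximal and hence radical, the reduced subscheme $Z=\{x_0\}_\red$ equals $V(\maxid)$, and $\maxid$ is generated by the monomials $T^g$, with $g$ ranging over the minimal generating set of $\Gamma_{p,n}$.

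Next I would compute $\Bl_Z(X_{p,n})$ chart by chart. Because blowing up commutes with restriction to an open subscheme, $\Bl_Z(X_{p,n})$ is obtained by gluing $\Bl_\emptyset(\Spec K[T^{-1}])=\Spec K[T^{-1}]$ and $\Bl_{V(\maxid)}(\Spec K[T^{\Gamma_{p,n}}])$ along their common overlap $\Spec K[T^{\pm 1}]$. By the result of Barucci--Dobbs--Fontana quoted above, $\Bl_\maxid(\Spec K[T^{\Gamma_{p,n}}])=\Spec K[T^{\Gamma'}]$, where $\Gamma'$ is generated by the multiplicity $b=e_{p,n}=p^n-p^{n-1}$ together with the differences $a_i-b$ of the remaining minimal generators $a_i$ from $b$. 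Those differences are exactly $p^{n-1}$ and $p^{n-1}-p^j$ for $0\leq j\leq n-2$, as was observed right before the lemma, so $\Gamma'=\Gamma_{p,n-1}$. Finally I would note that the canonical blow-up morphism $\Spec K[T^{\Gamma_{p,n-1}}]\to\Spec K[T^{\Gamma_{p,n}}]$ restricts to the identity on $\Spec K[T^{\pm 1}]$; hence the gluing datum defining $\Bl_Z(X_{p,n})$ is precisely the one gluing $\Spec K[T^{\Gamma_{p,n-1}}]$ to $\Spec K[T^{-1}]$ along $\Spec K[T^{\pm 1}]$, which is the definition of $X_{p,n-1}$.

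The only delicate point is the bookkeeping at the prime $p=2$, where the minimal generating set of $\Gamma_{p,n}$ omits $p^n$; but the multiplicity is still $b=p^n-p^{n-1}$, the same difference computation applies, and $b$ becomes redundant in $\Gamma'$ exactly as observed before the lemma, again yielding $\Gamma_{2,n-1}$ (with its own minimal generating set, missing $p^{n-1}$ when $n\geq 2$). I do not expect any genuine obstacle: all of the substance lies in the semigroup arithmetic, which has already been carried out, and in the affineness of the blow-up of a numerical semigroup ring at its maximal ideal, which is the content of the cited reference.
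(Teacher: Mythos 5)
Your argument is correct and is essentially the paper's own proof: the lemma is established by the paragraph preceding it, namely the Barucci--Dobbs--Fontana formula $\Bl_\maxid(K[T^\Gamma])=K[T^{\Gamma'}]$ with $\Gamma'=\langle b,a_2-b,\ldots,a_r-b\rangle$ applied to $\Gamma_{p,n}$ with $b=p^n-p^{n-1}$, giving $\Gamma'=\Gamma_{p,n-1}$. You merely make explicit the routine points the paper leaves implicit (the center lies in the chart $\Spec K[T^{\Gamma_{p,n}}]$, blowing up commutes with restriction to opens, the gluing over $\Spec K[T^{\pm 1}]$, and the $p=2$ bookkeeping), which is fine.
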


Note that for every $m\leq n$, we get an inclusion $\Gamma_{p,n}\subset\Gamma_{p,m}$ of numerical semigroups inside $\NN$.  The resulting inclusions of coordinate rings $K[T^{\Gamma_{p,n}}]\subset K[T^{\Gamma_{p,m}}]$ define canonical morphisms $X_{p,m}\ra X_{p,n}$ of compactifications of the affine line $\AA^1=\Spec K[T^{-1}]$.

\begin{proof}[Proof of Theorem~\ref{automorphism group scheme} in the general case]
We proceed by induction on $n\geq 1$. The case $n=1$ was handled above. Now suppose $n\geq 2$ and that the assertion is true for $n-1$.  To simplify notation, set
$$
X=X_{p,n},\quad  G=\Aut_{X/K},\quadand H=\GG_a\rtimes U_n\rtimes \GG_m,
$$
and let $I\subset G$ be the inertia group scheme for the singularity $x_0\in X$.  Likewise, we set $X'= X_{p,n-1}$. By induction, $H'=\GG_a\rtimes U_{n-1}\rtimes \GG_m$ coincides with $G'=\Aut_{X'/K}$.  Also note that by the very definition of the group schemes, we have a canonical inclusion $H'\subset H$.  Moreover, with Proposition~\ref{inertia} we get an inclusion $H'\subset I$, and actually $H'=I\cap H$.  By Lemma~\ref{blowing-up} combined with \cite[Proposition~2.7]{Martin 2022}, the blowing-up morphism $f\colon X'\ra X$ is equivariant with respect to the action of $I$. We thus get inclusions $H'\subset I\subset G'=H'$ and infer $H'=I$.

The orbit map for the rational point $x_0\in X$ gives an inclusion $H/H' \subset G/I$ of closed subschemes inside~$X$.  This is actually contained in the scheme of singularities $Z=\Sing(X/K)$, according to \cite[Proposition~3.1]{Brion; Schroeer 2023}.  Our task is to show that the inclusion is an equality, and for this it suffices to verify that the coordinate rings have the same degree.  We already saw that $h^0(\O_{H/H'})=p^n$, and it remains to verify $h^0(\O_{G/I})\leq p^n$.  For this, we may assume that $K$ is algebraically closed.

According to \eqref{relation differentials} and Proposition~\ref{homogeneous ideal}, the scheme of singularities $Z=\Sing(X/K)$ has coordinate ring of the form $A= K[x_0,\ldots,x_{n-1},y]\,/(x_0^p,\ldots,x_{n-1}^p,y^{p-2})$.  In light of \cite[Section~III.3, Theorem~6.1]{Demazure; Gabriel 1970}, the homogeneous space $G/I$ has coordinate ring of the form $B=K[u_1,\ldots,u_r]\,/(u_1^{p^{\nu_1}},\ldots, u_r^{p^{\nu_r}})$ for some $r\geq 0$ and certain exponents $\nu_i\geq 1$. From the canonical surjection
$$
\varphi\colon A=\Gamma(\O_Z)\lra\Gamma(\O_{G/I})= B, 
$$ 
we infer $\nu_i=1$ and $r\leq n+1$.  Using the relation $y^{p-2}=0$, we see that $\varphi(y)\in \maxid_B$ must be contained in $\maxid_B^2$, and thus actually $r\leq n$.  It follows that $h^0(\O_{G/I})\leq p^n$, as desired.
\end{proof}

\section{Equivariant normality and twisting}
\label{Equivariant normality}

We now seek to construct twisted forms of our curves $X_{p,n}$ that are regular.  Our methods to achieve this apply in many other contexts, and we first give a general discussion about twisted forms, their regularity properties, and Brion's recent notion of equivariant normality.

Fix a ground field $K$, and let $X$ be a scheme.  Recall that another scheme $Y$ is called a \emph{twisted form} of $X$ if we have $Y\otimes L\simeq X\otimes L$ for some field extension $K\subset L$.  Such twisted forms may arise as follows: Suppose that a group scheme $G$ acts on $X$, and let $P$ be a $G$-torsor.  Then $G$ acts diagonally on $P\times X$, and the quotient
$$
{}^P\! X = G\backslash (P\times X)
$$
is a twisted form of $X$. Note that the diagonal action is free; hence the quotient exists as an algebraic space. Such quotients are not necessarily schematic (for concrete examples, see \cite{Schroeer 2022}).  However, if $G$ is finite and $X$ is covered by affine open sets that are $G$-stable, the twisted form is indeed a scheme (\textit{cf.} \cite[Section~III.2, Theorem~3.2]{Demazure; Gabriel 1970}).
 
\looseness=-1 Now suppose that we are in positive characteristic $p>0$.  It then may happen that a noetherian scheme with singularities has twisted forms where all singularities are gone.  We now describe a fairly general procedure to achieve this, relying on a combination of works of Brion and the second author \cite{Brion 2022b, Brion; Schroeer 2023,Schroeer 2007,Schroeer 2023c}.  For simplicity, we assume throughout that $X$ is a separated scheme of finite type that is geometrically integral.  It is \emph{normal} if all local rings $\O_{X,a}$ are integrally closed in the common function field $F=\Frac(\O_{X,a})$.  Equivalently, each finite modification $f\colon X'\ra X$ is an isomorphism.  Here the term \emph{modification} refers to an integral scheme $X'$, together with a proper surjective morphism $f\colon X'\ra X$ inducing a bijection on function fields.

In what follows, we suppose that $X$ is endowed with the action of a finite group scheme $G$.  We now consider only modifications $f\colon X'\ra X$ where $X'$ is a $G$-scheme and $f$ is equivariant. For brevity, we call such a datum an \emph{equivariant modification} or \emph{$G$-modification}. Examples are given by blowing-ups $X'=\Bl_Z(X)$ with respect to $G$-stable centers $Z\subset X$.  Note that for a given modification $X'$, there is at most one $G$-action on $X'$ making $f\colon X'\ra X$ equivariant.

One says that $X$ is \emph{equivariantly normal}, or \emph{$G$-normal}, if every finite equivariant modification $f\colon X'\ra X$ is an isomorphism.  This extremely useful notion was introduced and studied by Brion in \cite{Brion 2022b}. He showed that $X$ admits a finite equivariant modification $\tilde{X}$ that is equivariantly normal (\textit{cf.} \cite[Proposition 4.2]{Brion 2022b}).
It is actually unique up to unique equivariant isomorphism provided that $X$ is one-dimensional (\textit{cf.} \cite[Corollary 4.4]{Brion 2022b}).

From now on, we furthermore assume that our $G$-scheme $X$ is one-dimensional. One could also say that $X$ is a \emph{$G$-curve}.  Following \cite[Section~2]{Fanelli; Schroeer 2020} we write $\Sing(X/K)$ for the closed subscheme defined by the first Fitting ideal for $\Omega^1_{X/K}$. This is the set of points $a\in X$ where the local ring $\O_{X,a}$ fails to be geometrically regular, endowed with a canonical scheme structure.  Note that with respect to this scheme structure, it must be $G$-stable (\textit{cf.} \cite[Proposition~3.1]{Brion; Schroeer 2023}).  The existence of twisted forms that are regular is intimately related to equivariant normality.

\begin{theorem}
\label{sufficient regular twisting}
Let $P$ be a $G$-torsor. Then the twisted form $Y={}^P\!X$ is regular provided the following three conditions hold:
\begin{enumerate}
\item\label{srt-1} The curve $X$ is  $G$-normal.
\item\label{srt-2} The total space of the $G$-torsor  $P$ is reduced.
\item\label{srt-3} The reduction of the finite scheme $\Sing(X/K)$ is \'etale.
\end{enumerate}
\end{theorem}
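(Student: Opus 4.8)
The plan is to check regularity of $Y={}^P\!X$ by descending along the torsor $\pi\colon P\times X\to Y$, realizing the non-normality of $X$ as being cancelled out by the twist.  Since $Y$ is a twisted form of the geometrically integral one-dimensional scheme $X$, it is itself geometrically integral and one-dimensional (both may be checked after the field extension that splits $P$); hence $Y$ is regular if and only if it is normal, and it suffices to show $Y$ is normal.  The morphism $\pi$ is a $G$-torsor, hence finite, faithfully flat and of finite presentation (quotient of an affine scheme by a free action of a finite flat group scheme; \textit{cf.}~\cite[Section~III.2, Theorem~3.2]{Demazure; Gabriel 1970}).  By fppf descent along $\pi$, finite modifications of $Y$ correspond to finite $G$-equivariant modifications of $P\times X$: a finite modification $Y'\to Y$ pulls back to the $G$-equivariant $Y'\times_Y(P\times X)\to P\times X$, and conversely a finite $G$-equivariant modification of $P\times X$ descends.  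So it suffices to prove that $P\times X$ admits no nontrivial finite $G$-equivariant modification.

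Now condition~\eqref{srt-2} enters: being reduced and finite over $K$, $P$ is a disjoint union $P=\coprod_i\Spec L_i$ of spectra of finite field extensions $L_i/K$; since $X$ is geometrically integral, hence geometrically reduced, $P\times X=\coprod_i X\otimes_K L_i$ is a finite disjoint union of integral curves, on which $G$ permutes the components.  Over the complement of the finite scheme $\Sing(X/K)$ the curve $X$ is geometrically regular over $K$, so each $X\otimes_K L_i$ is regular there and a finite $G$-equivariant modification is automatically an isomorphism; hence any nontriviality must be concentrated over $\Sing(X/K)$.  At a point $a\in\Sing(X/K)$, condition~\eqref{srt-3} forces the residue field $\kappa(a)$ to be finite \emph{separable} over $K$, so $\kappa(a)\otimes_K L_i$ stays reduced and the relevant local model over $a$ is $\widehat{\O_{X,a}}\otimes_K L_i$ with its induced $G$-action --- equivalently, the twisted complete local rings $\widehat{\O_{Y,y}}$.

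The heart is to show that $G$-normality of $X$ --- condition~\eqref{srt-1} --- rules out a nontrivial finite $G$-equivariant modification of $P\times X$ concentrated over $\Sing(X/K)$.  The point is that $G$-normality says, in Brion's language \cite{Brion 2022b}, that $X$ equals its own equivariant normalization; and, because $P$ is reduced, this should be transported to $P\times X$ via the $G$-equivariant faithfully flat projection $P\times X\to X$, so that a nontrivial finite $G$-equivariant modification of $P\times X$ would give one of $X$ itself, which is excluded.  Consequently $Y$ has no nontrivial finite modification, i.e.\ $Y$ is normal, hence regular.

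What is routine here is the identification of finite modifications of $Y$ with finite $G$-equivariant modifications of $P\times X$, the splitting of $P$ into field factors, and the reduction to the singular locus via geometric regularity of $X$ away from it.  The genuine obstacle is the last step: making precise how $G$-normality of $X$ --- a condition over $K$, where $G$ may be infinitesimal and the singularity of $X$ is genuine --- propagates through the reduced torsor $P$ so as to forbid equivariant modifications of $P\times X$.  This is exactly where the three hypotheses interact: reducedness of $P$ keeps the modification dictionary within the realm of integral (or reduced) schemes, and separability of the residue fields at $\Sing(X/K)$ ensures that base-changing along each $L_i$ introduces no non-normality beyond the one the twisted $G$-action already accounts for.
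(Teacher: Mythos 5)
Your overall reduction—$Y$ integral and one-dimensional, so regular $\Leftrightarrow$ normal, and the dictionary between finite modifications of $Y$ and finite $G$-equivariant modifications of $P\times X$ along the torsor $P\times X\to Y$—is a reasonable framework (modulo some care about reducedness of the fiber products). But the step you yourself flag as "the genuine obstacle" is precisely the content of the theorem, and the mechanism you sketch for it does not work. A modification of $P\times X$ equivariant for the \emph{diagonal} $G$-action does not descend along $\pr_2\colon P\times X\to X$: this projection is the base change of $P\to\Spec K$, so descent along it requires descent data over $P\times_K P\times X$, i.e.\ equivariance for the $G$-action on the $P$-factor alone (via $P\times_K P\simeq G\times P$), which is a different action from the diagonal one; nor can you simply push forward, since $P\times X\to X$ is finite flat of degree $|G|$ and not birational. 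So $G$-normality of $X$ is not "transported" in the way you suggest, and the argument is incomplete at its decisive point; note also that hypothesis~(iii) plays no substantive role in your sketch of that step, while it is genuinely needed (it is what allows the initial separable base change without destroying reducedness of $P$ or equivariant normality of $X$).

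For comparison, the paper's proof avoids any modification-descent and works locally at the twisted singular orbits. Using~(iii), one passes to the separable join $L$ of the residue fields of $\Sing(X/K)$, after which $P$ stays reduced, $X$ stays $G$-normal, and the singular points $a_i$ are rational. The key input from equivariant normality is Brion's characterization (\cite[Theorem~4.13]{Brion 2022b}): the orbit $Z_i=G\cdot a_i=G/H_i$ is an effective Cartier divisor on $X$. Its twist ${}^P\!Z_i$ is then an effective Cartier divisor on $Y$, and it is identified with $H_i\backslash P$, whose coordinate ring is a subring of the reduced ring $\Gamma(P,\O_P)$, hence reduced. A one-dimensional local ring carrying a reduced effective Cartier divisor through its closed point is a discrete valuation ring, so $Y$ is regular along ${}^P\!Z_i$; away from the orbits, $Y$ is a twist of the geometrically regular open complement. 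If you want to complete your approach, this Cartier-divisor consequence of $G$-normality (or some equivalent local statement) is what must replace the missing transport argument.
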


\begin{proof}
The residue fields $\kappa(a)$ for the points $a\in\Sing(X/K)$ are separable, by assumption~\eqref{srt-3}, and so is their join $L$.  This ensures that the base-change $P_L$ remains reduced. Furthermore, the arguments for \cite[Proposition~4.10]{Brion 2022b} show that $X_L$ remains equivariantly normal. Replacing the ground field with $L$, we may assume that $\Sing(X/K)=\{a_1,\ldots,a_r\}$ comprises only rational points. Let $H_i\subset G$ be the inertia subgroup scheme and $Z_i=G\cdot a_i=G/H_i$ be the orbit for $a_i\in X$.  Clearly, the subscheme $Z_1\cup\ldots\cup Z_r$ and its complementary open set $U$ are $G$-stable.  The latter is geometrically regular, and so is the twisted form ${}^P\!U$.

It remains to verify that the integral curve $Y={}^P\!X$ is regular at the points $b\in{}^P\!Z_i$.  According to \cite[Theorem~4.13]{Brion 2022b}, the orbit $Z_i\subset X$ is an effective Cartier divisor.  In turn, its twist ${}^P\!Z_i\subset {}^P\!X$ is an effective Cartier divisor on ${}^P\!X$, so it suffices to verify that it is reduced. The latter becomes the quotient of $P\times G/H_i$ by the diagonal $G$-action, which can be identified with $H_i\backslash P$.  Its coordinate ring is a subring inside $\Gamma(P,\O_P)$, which can be seen as a ring of invariants, and $\Gamma(P,\O_P)$ is reduced by assumption.
\end{proof}

Note that condition~\eqref{srt-3} holds in particular if all $a\in\Sing(X/K)$ are rational points.  The first two conditions can be achieved after ground field extensions.

\begin{proposition}
\label{conditions after base-change}
Suppose that the curve $X$ is $G$-normal.  Then there is a field extension $K\subset L$ such that the following hold:
\begin{enumerate}
\item\label{cbc-1} The base-change $X_L$ is $G_L$-normal.
\item\label{cbc-2} There is a $G_L$-torsor $P$ whose total space is reduced.
\end{enumerate}
\end{proposition}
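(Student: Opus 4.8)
The plan is to take for $L$ the function field of an appropriate homogeneous space for $G$: such a field automatically carries a reduced torsor, while remaining a separable extension of $K$, which is exactly what one needs to preserve $G$-normality. Concretely, since $G$ is finite it is a closed subgroup scheme of some $\GL_n$ over $K$ (for instance via the regular representation on $\Gamma(\O_G)$). Then $G$ acts freely on $\GL_n$ by right translation, so the fppf quotient $Q=\GL_n/G$ exists as an affine $K$-scheme, $\pi\colon\GL_n\ra Q$ is a $G$-torsor, and $\Gamma(Q,\O_Q)=\Gamma(\GL_n,\O_{\GL_n})^G$, over which $\Gamma(\GL_n,\O_{\GL_n})$ is finite and locally free (see \cite[Section~III.2]{Demazure; Gabriel 1970}). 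The morphism $\pi$ is finite, faithfully flat and surjective, and its formation commutes with base change; since $\GL_n\otimes\bar K$ is regular and regularity descends along faithfully flat morphisms, $Q\otimes\bar K$ is regular, so $Q$ is smooth over $K$, and being connected (the image of the connected scheme $\GL_n$) and normal it is integral. I then set $L=K(Q)$, a separable field extension of $K$, and take $P$ to be the generic fibre of $\pi$.

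Condition~\eqref{cbc-2} is then immediate: by base change $P$ is a $G_L$-torsor, and its coordinate ring is the localization of the domain $\Gamma(\GL_n,\O_{\GL_n})=K[x_{ij}][\det^{-1}]$ at the nonzero elements of the subring $\Gamma(\GL_n,\O_{\GL_n})^G$, hence a domain. So the total space $P$ is integral, in particular reduced. (It will in general fail to be \emph{geometrically} reduced once $G$ has a nontrivial infinitesimal part, which is consistent with the proof of Theorem~\ref{sufficient regular twisting}, where reducedness after a further base change is treated separately.)

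For condition~\eqref{cbc-1}, I want to conclude that $X_L$ is $G_L$-normal, and the only subtle input is that $G$-normality is preserved along the separable extension $K\subset L$. I would deduce this from the algebraic case established in \cite[Proposition~4.10]{Brion 2022b} by a spreading-out and limit argument: write $L$ as the filtered union of its finitely generated subextensions, each of which is finite separable over a purely transcendental extension $K(t_1,\ldots,t_d)$; any finite equivariant modification of $X_L$ is already defined over such a stage, hence over $X\otimes_K K[t_1,\ldots,t_d]_h$ for suitable nonzero $h$, so specializing to a well-chosen closed point of $\Spec K[t_1,\ldots,t_d]_h$ reduces the purely transcendental part to finitely many algebraic base changes, and the finite separable part is exactly \cite[Proposition~4.10]{Brion 2022b}, applied as in the proof of Theorem~\ref{sufficient regular twisting}. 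The hard part will be exactly this reduction: one has to check that the schematic image and group-scheme-closure operations defining the equivariant normalization are compatible with the flat base changes $K\to K(t_1,\ldots,t_d)$ and with filtered colimits of the ground field, so that Brion's argument, phrased for algebraic separable extensions, indeed carries over to $K\subset L$. Granting this, the three paragraphs combine to give the proposition.
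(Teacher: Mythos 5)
Your handling of~\eqref{cbc-2} is essentially the paper's own construction, made concrete: the paper picks a geometrically integral quasi-projective $G$-scheme $U$ with (generically) free action --- e.g.\ coming from an embedding of $G$ into a smooth group scheme, which is exactly your $\GL_n$ with the translation action --- sets $L=k(U/G)$ and takes $P$ to be the generic fibre of $U\ra U/G$, i.e.\ $\Spec k(U)$. Your verification that this fibre is integral (hence reduced) and that $L$ is separable over $K$ because the relevant schemes are geometrically reduced is correct and matches the paper.

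The problem is~\eqref{cbc-1}: you never prove that $G$-normality of $X$ survives the separable but transcendental extension $K\subset L$. You reduce it to a spreading-out and specialization argument which you explicitly decline to carry out (``granting this''), and that reduction is precisely where the content of~\eqref{cbc-1} lies. As sketched it also has unresolved points: after descending a finite equivariant modification $X'\ra X_L$ to a finitely generated subextension and spreading it out over a $K$-variety $T$, you must know that at a well-chosen closed point $t\in T$ with separable residue field the fibre $X'_t\ra X_{\kappa(t)}$ is again a finite \emph{equivariant modification} --- in particular that $X'_t$ is integral and birational onto $X_{\kappa(t)}$; integrality of the generic fibre does not by itself give integral special fibres --- and that it is still not an isomorphism there. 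None of this is checked. The paper takes a much shorter route: having observed that $L$ is separable over $K$, it invokes the \emph{arguments} of \cite[Proposition~4.10]{Brion 2022b} (exactly as in the proof of Theorem~\ref{sufficient regular twisting}), which are not confined to algebraic extensions but apply to separable extensions in general, essentially because the constructions entering equivariant normality are compatible with separable base change. So either adapt Brion's argument directly to the separable extension $K\subset L$, or supply the missing integrality, equivariance and non-isomorphy statements in your specialization scheme; as written, \eqref{cbc-1} is not established.
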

 
\begin{proof}
\eqref{cbc-2}~ Choose a geometrically integral quasi-projective $G$-scheme $U$ with generically free action.  This could arise from an embedding $G\subset H$ into a smooth group scheme of finite type or could arise from a projective scheme $X$ with $G=\Aut^0_{X/K}$, according to \cite[Proposition~1.7 or Theorem~2.1]{Brion; Schroeer 2023}.  The quotient $V=U/G$ is an integral quasi-projective scheme, and the quotient map $f\colon U\ra V$ induces a finite extension of the function field $L=k(V)$ by $E=k(U)$.  By construction, the reduced scheme $P=\Spec(E)$, viewed as an $L$-scheme, is a torsor with respect to the base-change $G_L=G\otimes_KL$.

\eqref{cbc-1}~ The above extension $K\subset E$ is separable because $U$ is geometrically reduced.  In turn, the subextension $L$ is also separable.  The arguments for \cite[Proposition~4.10]{Brion 2022b} show that $X_L$ remains equivariantly normal.
\end{proof}

In the reverse direction, we have the following result.  

\begin{theorem}
\label{sufficient equivariant normal}
Suppose that there exist a field extension $K\subset L$, a subgroup scheme $H\subset G_L$, and a $H$-torsor $P$ so that the twisted form ${}^P\!(X_L)$ is regular. Then the curve $X$ is $G$-normal.
\end{theorem}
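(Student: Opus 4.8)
The plan is to verify the defining property of $G$-normality head on: I would show that every finite equivariant modification $f\colon X'\to X$ is an isomorphism. Fix such an $f$. Since $X$ is geometrically integral and $f$ induces an isomorphism on function fields, $X'$ is geometrically integral too — the extension $K(X')=K(X)$ of $K$ is regular, and for integral schemes of finite type over a field, regularity of the function field is equivalent to geometric integrality. Hence for \emph{every} field extension $K\subset L$ the base-change $X'_L$ is integral, and $f_L\colon X'_L\to X_L$ is again a finite modification, this time of $G_L$-schemes.

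Next I would specialize $L$, the subgroup scheme $H\subset G_L$ and the $H$-torsor $P$ as in the hypothesis. Restricting the $G_L$-action along $H\subset G_L$ makes $f_L$ an $H$-equivariant morphism, so I may twist and obtain ${}^P\!f_L\colon{}^P\!(X'_L)\to{}^P\!(X_L)$; here both twists are schemes, because $H$ is finite and our curves are covered by $H$-stable affine opens (\textit{cf.} \cite[Section~III.2, Theorem~3.2]{Demazure; Gabriel 1970}). Twisting preserves geometric integrality, while finiteness and the property of being an isomorphism over a dense open subscheme can both be checked after the faithfully flat base-change $P\times X_L\to{}^P\!(X_L)$, along which ${}^P\!f_L$ pulls back to $\id_P\times f_L$; so ${}^P\!f_L$ is a finite modification between integral schemes. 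By hypothesis ${}^P\!(X_L)$ is regular, hence normal (regular local rings being integrally closed), so every finite modification onto it is an isomorphism. Therefore ${}^P\!f_L$ is an isomorphism.

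It then remains to untwist and to descend. I would use that $\id_P\times f_L\colon P\times X'_L\to P\times X_L$ is a morphism of $H$-torsors lying over ${}^P\!f_L$ — the torsor projections being $P\times X'_L\to{}^P\!(X'_L)$ and $P\times X_L\to{}^P\!(X_L)$ — so, ${}^P\!f_L$ being an isomorphism, $\id_P\times f_L$ is an isomorphism as well; since it is the base-change of $f_L$ along the faithfully flat second projection $P\times X_L\to X_L$, faithfully flat descent gives that $f_L$ is an isomorphism. Finally, $\Spec L\to\Spec K$ is faithfully flat, and ``being an isomorphism'' descends along it, whence $f$ is an isomorphism. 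This would establish that $X$ is $G$-normal.

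I expect the main obstacle to be the bookkeeping around twists: checking that geometric integrality of $X'_L$ and of its twist, finiteness and birationality of ${}^P\!f_L$, and the fact that twisting reflects isomorphisms all genuinely survive the twisting operation, and that the twisted forms remain schemes. A cleaner, essentially equivalent route would be to run the argument with the \emph{equivariantly normal} modification $\tilde X\to X$ furnished by Brion's theory in place of an arbitrary $f$, invoking its uniqueness in dimension one; but one still has to control the behaviour of twists, so the difficulty is the same.
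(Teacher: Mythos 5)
Your proof is correct, but it runs along a genuinely different track than the paper's. The paper first reduces to the case $L=K$, $H=G$ via \cite[Proposition~4.10 and Remark~4.3]{Brion 2022b}, and then never touches modifications at all: it invokes Brion's criterion \cite[Theorem~4.13]{Brion 2022b} that $G$-normality of a curve is equivalent to every finite $G$-stable closed subscheme $Z\subset X$ being Cartier, twists such a $Z$ into the regular curve ${}^P\!X$ (where regularity makes ${}^P\!Z$ Cartier), and untwists over the field $E=\kappa(p)$ of a point $p\in P$ to conclude that $Z\otimes E$, hence $Z$, is Cartier; the identification $\Aut_{{}^P\!X/K}={}^P\!\Aut_{X/K}$ from \cite[Lemma~3.1]{Schroeer; Tziolas 2023} is used along the way. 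You instead attack the definition directly: base-change the given finite equivariant modification to $L$, restrict the action to $H$, twist the whole morphism, observe that a finite modification onto a regular (hence normal) integral curve is an isomorphism, and then transport ``isomorphism'' back along the fppf torsor projection $P\times X_L\to{}^P\!(X_L)$ and along $\Spec L\to\Spec K$ by descent. The trade-off: the paper's route outsources the delicate part to Brion's Cartier criterion and only has to twist a finite closed subscheme, so there is no bookkeeping about schematicity of twists of $X'$ or descent of morphism properties; your route is more self-contained (it needs neither Theorem~4.13 nor the reduction lemmas, only fppf descent and the fppf-local triviality of twisting), at the price of exactly the checks you list — all of which do go through, since $X'_L$ is again a separated one-dimensional finite-type scheme, hence quasi-projective and covered by $H$-stable affine opens (note that \cite[Section~III.2, Theorem~3.2]{Demazure; Gabriel 1970} gives the existence of the quotient \emph{given} such a covering; the covering itself comes from quasi-projectivity of curves), and the twisting square is cartesian because a morphism of $H$-torsors over a fixed base map is an isomorphism onto the pullback torsor.
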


\begin{proof}
According to \cite[Proposition~4.10 and Remark~4.3]{Brion 2022b}, it suffices to treat the case $L=K$ and $H=G$.  Let $X'={}^P\!X$ be the regular twisted form.  According to \cite[Lemma~3.1]{Schroeer; Tziolas 2023}, we have a canonical identification $\Aut_{X'/K}={}^P\!\Aut_{X/K}$ of the sheaves of automorphisms, where the term on the right is formed with respect to the conjugacy action of $G$ on $\Aut_{X/K}$.  Setting $G'={}^PG$, we get a homomorphism $G'\ra\Aut_{X'/K}$, hence a $G'$-action on $X'$.

Let $Z\subset X$ be a finite closed subscheme that is $G$-stable.  According to \cite[Theorem~4.13]{Brion 2022b}, we have to check that $Z$ is Cartier.  Its twist $Z'={}^P\!Z$ defines a finite closed subscheme inside $X'$. The latter is regular, so $Z'$ is Cartier.  Choose a point $p\in P$, and let $E=\kappa(p)$ be the resulting field extension.  The resulting trivialization of $P\otimes E$ defines an isomorphism $g\colon X\otimes E\ra X'\otimes E$ with $g(Z\otimes E)=Z'\otimes E$. It follows that $Z\otimes E$, and hence $Z$, is Cartier.
\end{proof}

Recall that a \emph{quasielliptic curve} is a regular curve $Y$ that is a twisted form of the \emph{rational cuspidal curve}
$$
X=\Spec K\left[T^2,T^3\right]\cup \Spec K\left[T^{-1}\right].
$$
Clearly, $\Sing(X/K)$ is a singleton, containing only the rational point $x_0$ given by $T^2=T^3=0$.  It turns out that quasielliptic curves exist only in characteristic two and three (compare with the discussion after Proposition~\ref{useful facts cohomology}).  For $p=2$, the rational vector field $D=T^{-2} \partial/\partial T$ satisfies $D^{[p]}=0$ and actually defines a global section $D\in H^0(X,\Theta_{X/K})$, hence corresponds to an action of $G=\alpha_p$, which is the Frobenius kernel of the additive group $\GG_a$.  The orbit $G\cdot x_0$ is the Cartier divisor defined by $T^2=0$.  For $p=3$, the same holds for $D=\partial/\partial T$ and the Cartier divisor $T^3=0$.

In both cases, we conclude that the rational cuspidal curve is equivariantly normal with respect to $G=\alpha_p$ (again by \cite[Theorem~4.13]{Brion 2022b}). For this group scheme, torsors with regular total space exist if and only if $K$ is imperfect (see for example \cite[Lemma~7.1]{Schroeer; Tziolas 2023}), and then quasielliptic curves exist by Theorem~\ref{sufficient regular twisting}.  Also note that $X$ is equivariantly normal with respect to any larger finite subgroup scheme inside the full automorphism group scheme (obvious, see \cite[Remark~4.3]{Brion 2022b}).  According to \cite[Proposition~6]{Bombieri; Mumford 1977}, we have an iterated semidirect product $\Aut_{X/K}=\GG_a\rtimes U\rtimes\GG_m$ for an infinitesimal group scheme $U$.  For $p=3$, it coincides with the copy of $\alpha_p$ described above, whereas for $p=2$, it has order $|U|=8$.

All this generalizes to our hierarchy of curves $X_{p,n}$.

\begin{theorem}
\label{equivariantly normal}
The curve $X=X_{p,n}$ is equivariantly normal with respect to the finite group scheme $U_n$ and locally of complete intersection.  Moreover, if there is a $U_n$-torsor $P$ so that the quotient $\bar{P}=U_{n-1}\backslash P$ is reduced, then the twisted form $Y={}^P\!X$ is regular.
\end{theorem}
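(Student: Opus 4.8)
The statement that $X=X_{p,n}$ is locally of complete intersection is immediate and was already used in Section~\ref{Complete intersection}: away from $x_0$ the curve is the smooth affine line $\AA^1=\Spec K[T^{-1}]$, while $\hat{\O}_{X,x_0}=K[[T^\Gamma]]$ is a complete intersection by Proposition~\ref{conductor} together with Herzog's criterion \cite[Corollary~1.13]{Herzog 1970}. For the equivariant normality I would apply Brion's criterion \cite[Theorem~4.13]{Brion 2022b}: it suffices to check that every $U_n$-stable finite closed subscheme $Z\subseteq X$ is an effective Cartier divisor. On the open set $\AA^1$ the curve is regular, so the part of $Z$ lying there is automatically Cartier; hence we may assume $Z$ is supported at $x_0$, corresponding to an $\maxid$-primary ideal $\ideala\subseteq A=K[T^\Gamma]$. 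The two facts I would lean on are that $u=T^{p^n}$ is $U_n$-invariant — this is precisely the case $d=p^n$, $P=1$, in the proof of Theorem~\ref{action extends} — and that $A/(u)$ is the coordinate ring of the orbit $U_n\cdot x_0=U_n/U_{n-1}$ by Proposition~\ref{cartier divisor}, an $\maxid$-primary $K$-algebra of dimension $[U_n:U_{n-1}]=p^n$.

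Now comes the heart of the argument. Since $U_n$ acts transitively on the homogeneous space $U_n/U_{n-1}$, its only $U_n$-stable closed subschemes are the empty scheme and the whole space: pulling a stable ideal back along $U_n\to U_n/U_{n-1}$ one gets a left-translation-stable ideal of the coordinate ring of the infinitesimal group scheme $U_n$, and such an ideal is $0$ or the unit ideal, as one sees by applying the counit in the appropriate tensor slot. Consequently, if $\ideala$ is a proper $U_n$-stable $\maxid$-primary ideal, the image of $\ideala+(u)$ in $A/(u)$ is a proper $U_n$-stable ideal, hence zero, so $\ideala\subseteq (u)$. Writing $\ideala=u\ideala'$ with $\ideala'=(\ideala:u)$, one checks that $\ideala'$ is again $U_n$-stable — here one uses that $u$ is both a nonzerodivisor and $U_n$-invariant — and again $\maxid$-primary, with $\dim_K A/\ideala'=\dim_K A/\ideala-p^n$, because $\ideala'$ is a rank-one torsion-free module over the one-dimensional Cohen--Macaulay ring $A$ and hence $\dim_K \ideala'/u\ideala'=\dim_K A/(u)=p^n$. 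Iterating, the colength drops by $p^n$ at each step, so after finitely many steps one reaches $\ideala^{(k)}=A$, giving $\ideala=(u^k)=(T^{kp^n})$. This ideal is principal, so $Z$ is Cartier, and $X$ is $U_n$-normal.

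For the statement on twisted forms I would rerun the proof of Theorem~\ref{sufficient regular twisting}, noting that its hypotheses hold here in an especially clean form: $\Sing(X/K)_\red=\{x_0\}$ is a single rational point, hence étale over $K$, so no ground field extension is needed; its inertia group scheme is $U_{n-1}$ by Proposition~\ref{inertia}; and the orbit $U_n/U_{n-1}$ is an effective Cartier divisor by Proposition~\ref{cartier divisor}. On $X\smallsetminus\{x_0\}=\AA^1$ the twist ${}^P\!\AA^1$ is a form of a smooth scheme, hence regular. At the finitely many points $b$ of ${}^P\!X$ lying over $x_0$, the twisted orbit ${}^P\!(U_n/U_{n-1})$ is again an effective Cartier divisor, and it is identified with $U_{n-1}\backslash P=\bar{P}$ by the same computation as in the proof of Theorem~\ref{sufficient regular twisting}; this is reduced by hypothesis, and since a reduced effective Cartier divisor through a closed point of an integral curve forces the local ring there to be a discrete valuation ring, ${}^P\!X$ is regular at $b$. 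Hence $Y={}^P\!X$ is regular. This is exactly the place where the hypothesis ``$P$ reduced'' of Theorem~\ref{sufficient regular twisting} is relaxed to ``$\bar{P}$ reduced'': the only singular point has inertia $U_{n-1}$, so the only quotient of $P$ whose reducedness is needed is $U_{n-1}\backslash P$.

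The step I expect to be the main obstacle is the middle one — pinning down the $U_n$-stable finite closed subschemes at $x_0$. Everything there rests on the $U_n$-invariance of $T^{p^n}$ and on the triviality of $U_n$-stable subschemes of the homogeneous orbit $U_n/U_{n-1}$; granting those, the reduction $\ideala=u\ideala'$ and the induction on colength are formal, and the Cohen--Macaulayness used in the length count is precisely the complete intersection property established at the outset.
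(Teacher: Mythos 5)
Your proposal is correct. For the second assertion it follows the paper's own route: the paper also just reruns the proof of Theorem~\ref{sufficient regular twisting}, noting that the only singular point $x_0$ is rational with inertia $U_{n-1}$ and orbit $U_n/U_{n-1}$, so the only reducedness ever used is that of the twisted orbit ${}^P(U_n/U_{n-1})=U_{n-1}\backslash P=\bar P$. Where you genuinely diverge is the first assertion. The paper settles it in one line by citing \cite[Theorem~4.13 and Corollary~4.18]{Brion 2022b}: equivariant normality comes from Brion's criterion, for which Proposition~\ref{cartier divisor} supplies the Cartier property of the orbit, and the locally complete intersection property is then deduced from $U_n$-normality via Brion's Corollary~4.18 --- the paper even remarks afterwards that this makes the lci claim independent of the numerical semigroup computations, an independence your argument forgoes by taking lci from Proposition~\ref{conductor} (equivalently Proposition~\ref{homogeneous ideal}) together with Herzog's criterion. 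For normality you instead verify by hand the criterion that the paper itself invokes in the proof of Theorem~\ref{sufficient equivariant normal}, namely that every finite $U_n$-stable closed subscheme is Cartier, and your verification is sound: the invariance of $u=T^{p^n}$ is exactly the case $d=p^n$ in the proof of Theorem~\ref{action extends}; the triviality of proper $U_n$-stable subschemes of the homogeneous orbit is correct, and your counit trick works precisely because $\Gamma(\O_{U_n})$ is local with maximal ideal $\ker(\varepsilon)$, so a proper translation-stable ideal lies in $\ker(\varepsilon)$ and applying the counit in the tensor slot carrying the ideal kills every element (for a non-infinitesimal group one would instead argue via faithful flatness of the action map $G\times Z\ra G$); the stability of $(\ideala:u)$ and the colength bookkeeping are fine since $u$ is an invariant nonzerodivisor and $1\otimes u$ stays a nonzerodivisor after tensoring with $\Gamma(\O_{U_n})$. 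The net effect is that you reprove, in this special case and by explicit classification of stable ideals, the implication of Brion's theorem that the paper quotes wholesale, with the pleasant byproduct that every $U_n$-stable finite subscheme supported at $x_0$ equals $V(T^{kp^n})$, a multiple of the orbit divisor; the price is length and the loss of the paper's second, semigroup-free proof of the complete intersection property.
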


\begin{proof}
The first assertion follows from \cite[Theorem~4.13 and Corollary~4.18]{Brion 2022b}.  If $P$ itself is reduced, the assertion on the twisted form $Y={}^P\!X$ directly follows from Theorem~\ref{sufficient regular twisting}.  Its proof actually shows our slightly stronger claim because the singularity $x_0\in X$ is rational, with orbit $U_n/U_{n-1}$.
\end{proof}

Note that after some ground field extension $K\subset L$, there is a $U_n$-torsor $P$ that is reduced, according to Proposition~\ref{conditions after base-change}, and our curve $X=X_{p,n}$ acquires twisted forms that are regular.  However, the construction of $L$ relies, via \cite[Proposition~1.7 and Theorem~2.1]{Brion; Schroeer 2023}, among other things on embeddings of $U_n$ into smooth group schemes $H$, and here we have little control over $\dim(H)$ and $\trdeg(L)$.

Also note that the above argument for locally complete intersection relying on equivariant normality is independent of the arguments relying on numerical semigroups in the proof of Proposition~\ref{conductor}.


\section{Non-abelian cohomology and semidirect products}
\label{Non-abelian cohomology}

In this section we review the general notions of \emph{torsors and twisting}, which will be used in the next section to understand the twisted forms of our curves $X_{p,n}$. The material is well known, but it is not easy to find suitable references that are general enough for our purposes, yet not burdened by over-abstraction.  Throughout, we are guided by \cite[Section~I.5]{Serre 1972} and \cite[Section~III.2]{Giraud 1971}.
 
Let $\shT=\Sh(\catC)$ be the topos of sheaves on some site $\catC$, having a final object $S$.  For any group-valued object $G\in\shT$, we write $H^0(S,G)$ for the group of global sections and $H^1(S,G)$ for the set of isomorphism classes of \emph{$G$-torsors} $P$. The latter is an object endowed with a $G$-action that is locally isomorphic to $P_0=G$ with the translation action.  Another widespread term is \emph{principal homogeneous spaces}.  For $G$ commutative, our $H^1(S,G)$ coincides with the sheaf cohomology groups.  In general, however, $H^1(S,G)$ is merely a set, containing the class of the trivial torsor $P_0=G$ as a distinguished element.

An object $\tilde{X}$ is called a \emph{twisted form} of an object $X$ if the two are locally isomorphic.  If $X$ has a $G$-action and $P$ is a $G$-torsor, we get such a twisted form by forming the quotient $\tilde{X} ={}^P\!X= P\wedge^G X = G\backslash (P\times X)$ with respect to the diagonal action $\sigma\cdot (p,x)=(\sigma p,\sigma x)$. Note that this could also be written as $(p\sigma^{-1},\sigma x)$.  For $G=\Aut_{X/S}$, the above construction gives an identification between the non-abelian cohomology $H^1(S,G)$ and the set $\operatorname{Twist}(X)$ of isomorphism classes of twisted forms $\tilde{X}$ of the object $X$.  In any case, the $G$-action on $X$ induces a conjugacy action on $\Aut_{X/S}$, and we have $\Aut_{(P\wedge^GX)/S}=P\wedge^G\Aut_{X/S}$; compare for example with \cite[Lemma~3.1]{Schroeer; Tziolas 2023}.

Now suppose $X=G$ as sheaves of sets without group laws, and consider the homomorphism $G\times G^\op\ra\Aut_{X/S}$ given by $(\sigma_1,\sigma_2)\cdot x=\sigma_1 x\sigma_2^{-1}$.  One easily checks that the map is equivariant with respect to factorwise conjugation $\eta\cdot (\sigma_1,\sigma_2)=(\eta\sigma_1\eta^{-1},\eta\sigma_2\eta^{-1})$ and conjugation with inner automorphisms on $\Aut_{X/S}$. In turn, we get an induced homomorphism
$$
{}^P\!G\times{}^P\!G^\op\lra\Aut_{(P\wedge^GX)/S}=\Aut_{P/S}.
$$
Note that the equation stems from the identification $P\wedge^GX=P$.  The above endows each $G$-torsor $P$ with the \emph{additional structure} of a ${}^P\!G$-torsor and a ${}^P\!G^\op$-torsor. Furthermore, ${}^P\!G^\op$ is the automorphism group object of $P$ as a $G$-torsor, and $G$ is the automorphism group object of $P$ as a ${}^P\!G^\op$-torsor.  In turn, we get what we like to call the \emph{torsor translation map}
\begin{equation}
\label{torsor translation}
H^1\left(S,{}^P\!G\right)\lra H^1(S,G),\quad T\longmapsto P\wedge^{{}^P\!G}T,
\end{equation}
where the quotient on the right is formed with respect to the action $\tilde{\sigma}\cdot (p,t)=(p\tilde{\sigma}^{-1},\tilde{\sigma} t)$ and the $G$-action on $P\wedge^GT$ stems from the action on the first factor $P$.  The map \eqref{torsor translation} is bijective but does not respect the distinguished points: rather, it sends $T_0={}^P\!G$ to $P=P\wedge^{{}^P\!G}T_0$.
 
Now suppose that we have a short exact sequence
\begin{equation}
\label{extension}
1\lra A\lra B\stackrel{\pr}{\lra} C\lra 1
\end{equation}
of group objects. Then the group $H^0(S,C)$ acts from the right on the set $H^1(S,A)$ in the following way: For each global section $c\in H^0(S,C)$, the fiber $B_c=\pr^{-1}(c)$ with respect to the surjection $B\ra C$ carries compatible $A$-torsor structures from both sides, coming from the group law in $B$.  We now define $c\cdot [P] = [B_c\wedge^A P]$.  The stabilizer group at each torsor class is the subgroup of global sections $c\in H^0(S,C)$ where the set of global sections $H^0(S,B_c)$ is non-empty.

Let us write $H^1(S,A)/H^0(S,C)^\op = H^0(S,C)\backslash H^1(S,A)$ for the quotient of the action.  Using the distinguished point in $H^1(S,A)$, the orbit map $c\mapsto B_c$ yields $H^0(S,C)\ra H^1(S,A)$. The latter serves as a \emph{connecting map} and yields a \emph{six-term sequence} of sets
\begin{equation*}
1\lra H^0(S,A)\lra H^0(S,B)\lra H^0(S,C)\stackrel{\partial}{\lra} H^1(S,A)\lra H^1(S,B)\lra H^1(S,C).
\end{equation*}
The maps on the right come from extension of structure groups.  Here all arrows preserve the distinguished points, and in degree zero the above is an exact sequence of groups.

The group object $B$ acts on itself and its quotient $C=B/A$ via conjugacy.  On the normal subgroup $A$, we have an induced action.  Twisting with respect to the $B$-actions gives another exact sequence
\begin{equation}
\label{twisted extension}
1\lra {}^P\!A\lra {}^P\!B\lra {}^P\!C\lra 1,
\end{equation}
which also yields a six-term sequence.  Note that in general there is no map relating $H^1(S,A)$ and $H^1(S,{}^P\!A)$ because the $B$-action on $A$ usually fails to be inner.

We now choose for each $C$-torsor $\bar{P}$ whose class belongs to the image of the mapping $H^1(S,B)\ra H^1(S,C)$ some $B$-torsor $P$ with $C\wedge^B T\simeq \bar{P}$.  As in \cite[Section~5.5, Corollary 2]{Serre 1972}, one has the following.

\begin{theorem}
\label{torsor decomposition}
The first cohomology of $B$ can be written as a disjoint union
$$
H^1(S,B) = \bigcup H^1\left(S,{}^{P}\!A\right)/H^0\left(S,{}^{\bar{P}}\!C\right)^\op 
$$
running over all $[\bar{P}]$ from the image of $H^1(S,B)\ra H^1(S,C)$.  The inclusions are obtained by composing the induced maps $H^1(S,{}^{P}\!A)\ra H^1(S,{}^{P}\!B)$ with the torsor translation maps $ H^1(S, {}^{P}\!B) \ra H^1(S,B)$ given by \eqref{torsor translation}.
\end{theorem}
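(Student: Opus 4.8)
The plan is to transcribe the argument of Serre \cite[Section~5.5, Corollary~2]{Serre 1972} into the topos $\shT$, using systematically the torsor translation bijections \eqref{torsor translation} and the six-term sequences attached to \eqref{extension} and to its twists \eqref{twisted extension}. The decomposition itself will be tautological: $H^1(S,B)$ is the disjoint union of the fibers of $\pr\colon H^1(S,B)\to H^1(S,C)$, and only base points lying in the image contribute, so the real content is to describe each such fiber. First I would fix a class in this image; by the choice of $P$ it equals $[\bar P]$ with $\bar P=C\wedge^B P$ the pushforward of $P$ along $\pr$. Since the conjugacy action of $B$ on $C$ factors over $C$, one has ${}^P C={}^{\bar P}C$, and pushing forward along $\pr$ the torsor translation bijection $H^1(S,{}^P B)\xrightarrow{\ \sim\ }H^1(S,B)$, $T\mapsto P\wedge^{{}^P B}T$, produces a commutative square with the torsor translation bijection $H^1(S,{}^{\bar P}C)\xrightarrow{\ \sim\ }H^1(S,C)$. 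The top map carries the distinguished point to $[P]$, hence, after applying $\pr$, to $[\bar P]$, while the bottom map carries the distinguished point to $[\bar P]$ as well; so the top torsor translation restricts to a bijection from the fiber of $H^1(S,{}^P B)\to H^1(S,{}^{\bar P}C)$ over the distinguished point onto $\pr^{-1}([\bar P])\subset H^1(S,B)$.

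It then remains to describe the fiber over the distinguished point for the twisted extension \eqref{twisted extension}. By exactness of its six-term sequence, this fiber is exactly the image of the structure-group extension map $H^1(S,{}^P A)\to H^1(S,{}^P B)$. To parametrise that image I would invoke the boundary action: $H^0(S,{}^{\bar P}C)=H^0(S,{}^P C)$ acts on the right on $H^1(S,{}^P A)$ by $c\cdot[Q]=[({}^P B)_c\wedge^{{}^P A}Q]$, where $({}^P B)_c=\pr^{-1}(c)$ carries its two canonical ${}^P A$-torsor structures, and the map $H^1(S,{}^P A)\to H^1(S,{}^P B)$ is constant on orbits and descends to a bijection from $H^1(S,{}^P A)/H^0(S,{}^{\bar P}C)^{\op}$ onto its image. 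Combining with the previous paragraph and letting $[\bar P]$ run over the image of $H^1(S,B)\to H^1(S,C)$ yields the stated decomposition; unwinding the two bijections shows that a class $[Q]\in H^1(S,{}^P A)$ is sent to $[P\wedge^{{}^P A}Q]\in H^1(S,B)$, that is, the inclusion is the composite of $H^1(S,{}^P A)\to H^1(S,{}^P B)$ with the torsor translation $H^1(S,{}^P B)\to H^1(S,B)$, as asserted.

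The main obstacle will be the last identification, namely that two classes of ${}^P A$-torsors with isomorphic extensions to ${}^P B$ already lie in a common $H^0(S,{}^{\bar P}C)$-orbit. For the orbit of the distinguished point this is immediate from the six-term sequence, but for a general orbit one has to twist the extension \eqref{twisted extension} once more --- by the ${}^P B$-torsor obtained from a chosen representative $Q\in H^1(S,{}^P A)$ --- and reduce to the distinguished case via another torsor translation, using that twisting is compatible with contracted products and with automorphism sheaves (\textit{cf.}~\cite[Lemma~3.1]{Schroeer; Tziolas 2023}). This bookkeeping is entirely formal, but it is where care is needed in the sheaf setting, since torsors are only required to be locally trivial; all the remaining verifications --- that the boundary action is well defined on isomorphism classes, that the displayed square commutes, and that torsor translation is bijective and compatible with $\pr$ --- are routine once the dictionary of \cite[Section~I.5]{Serre 1972} and \cite[Section~III.2]{Giraud 1971} is in place. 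I would also record that the decomposition is independent of the choice of the lifts $P$: the pieces are intrinsically the fibers $\pr^{-1}([\bar P])$, and changing $P$ only changes the chosen bijection of such a fiber with a quotient $H^1(S,{}^P A)/H^0(S,{}^{\bar P}C)^{\op}$.
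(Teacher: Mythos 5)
Your proposal is correct and coincides with what the paper intends: the paper offers no argument beyond the reference to \cite[Section~I.5.5, Corollary~2]{Serre 1972}, and your proof is exactly that twisting argument transcribed to the topos setting --- partition $H^1(S,B)$ into fibers over $H^1(S,C)$, use the torsor translation \eqref{torsor translation} to identify each nonempty fiber with the distinguished fiber of the twisted extension \eqref{twisted extension}, and then identify that fiber with $H^1(S,{}^{P}\!A)/H^0(S,{}^{\bar{P}}\!C)^\op$ via the six-term sequence, with a second twist by a representative torsor to get injectivity on orbits. You also correctly locate the only delicate point (the formal compatibilities of twisting with contracted products, pushforward, and the $H^0$-action), so no gap to report.
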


We are interested in cases where the above simplifies. Recall that $\pr\colon B\ra C$ is the canonical epimorphism. Let us call a morphism $s\colon C\ra B$ a \emph{set-theoretical section} if $\pr\circ s=\id_C$. The point here is that $s$ does not have to preserve the group laws.  The resulting $A\times C\ra B$ given by $(a,c)\mapsto a\cdot s(c)$ is an isomorphism of objects that does not necessarily respect the group laws.  The latter is determined by the two-cocycle $\tau\colon C^2\ra A$ defined by $s(cc')=\tau_{c,c'}\cdot s(c) s(c')$.  We like to indicate this situation by writing
$$
B=A\tilde{\times} C=A\tilde{\times}_\tau C
$$
and say that the extension $C$ is \emph{set-theoretically split}. Note that this always holds in the category of groups but often fails in the category of group schemes (compare with \cite[around Theorem~8.5]{Schroeer 2023b}). Also note that this property is not necessarily preserved in twisted extensions \eqref{twisted extension}.  If $s$ respects the group laws, the above becomes a semidirect product $B=A\rtimes C=A\rtimes_\varphi C$, where $\varphi$ is given by conjugacy, and the extension is called \emph{split}.

\begin{corollary}
\label{simplified torsor decomposition}
Suppose that for all $\bar{P}$ as above, the extension \eqref{twisted extension} is set-theoretically split or the group $H^0(S,{}^{\bar{P}}\!C)$ is trivial.  Then we have a disjoint union
$$
H^1(S,B) = \bigcup   H^1\left(S,{}^{P}\!A\right),
$$
running over all $[\bar{P}]$ from the image of $H^1(S,B)\ra H^1(S,C)$.  The latter map is actually surjective provided that the extension \eqref{extension} is split.
\end{corollary}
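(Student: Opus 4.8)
The plan is to read Corollary~\ref{simplified torsor decomposition} off Theorem~\ref{torsor decomposition}. That theorem already writes $H^1(S,B)$ as the disjoint union of the sets $H^1(S,{}^{P}\!A)/H^0(S,{}^{\bar P}\!C)^{\op}$, the inclusion of each piece being the composite $H^1(S,{}^{P}\!A)\to H^1(S,{}^{P}\!B)\to H^1(S,B)$ with the (bijective) torsor translation \eqref{torsor translation}. So the only thing to prove for the first assertion is that, under the stated hypotheses, the right action of $H^0(S,{}^{\bar P}\!C)$ on $H^1(S,{}^{P}\!A)$ is trivial for every $[\bar P]$ in the image of $H^1(S,B)\to H^1(S,C)$; equivalently, that $H^1(S,{}^{P}\!A)\to H^1(S,{}^{P}\!B)$ is injective, since its fibres are exactly the orbits of that action. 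Two sub-cases are immediate. If $H^0(S,{}^{\bar P}\!C)$ is the trivial group, quotienting by it changes nothing. And for the final clause: if \eqref{extension} is split by a homomorphic section $s\colon C\to B$, then $s$ induces $s_\ast\colon H^1(S,C)\to H^1(S,B)$ with $\pr_\ast\circ s_\ast=\id$, so $\pr_\ast\colon H^1(S,B)\to H^1(S,C)$ is onto and the union runs over all of $H^1(S,C)$.

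The substantive case is when \eqref{twisted extension} is set-theoretically split, say by a morphism $s\colon {}^{P}\!C\to {}^{P}\!B$ with $\pr\circ s=\id$. First I would note that $s$ restricts to an honest section $H^0(S,{}^{P}\!C)\to H^0(S,{}^{P}\!B)$ on global sections, so $H^0(S,{}^{P}\!B)\to H^0(S,{}^{P}\!C)$ is surjective; by exactness of the six-term sequence attached to \eqref{twisted extension}, the connecting map $\partial\colon H^0(S,{}^{P}\!C)\to H^1(S,{}^{P}\!A)$ is therefore trivial, so the action fixes the distinguished class. To upgrade this to triviality of the whole action I would use the twisting formalism of \cite[Section~I.5]{Serre 1972}: the fibre of $H^1(S,{}^{P}\!A)\to H^1(S,{}^{P}\!B)$ through a class $[Q]$ is identified, via twisting by $Q$, with the corresponding fibre for the extension $1\to {}^{Q}({}^{P}\!A)\to {}^{Q}({}^{P}\!B)\to {}^{P}\!C\to 1$, where the quotient ${}^{P}\!C$ is left unchanged because ${}^{P}\!A$ acts trivially on it by conjugation. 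Thus it suffices to see that this $Q$-twisted extension is again set-theoretically split, for then its connecting map vanishes by the previous paragraph and the fibre over the distinguished class — hence every fibre of the original map — is a single point. The required section would be built fibrewise over ${}^{P}\!C$, exploiting that each fibre $({}^{P}\!B)_c$ is the trivial ${}^{P}\!A$-torsor via $s(c)$, so that $({}^{Q}({}^{P}\!B))_c=Q\wedge^{{}^{P}\!A}({}^{P}\!B)_c$ can be handled explicitly.

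The step I expect to be the main obstacle is precisely this last one: the conjugation-twist by $Q$ does not commute with the section $s$ on the nose, so one must check with some care that the set-theoretic splitting persists after twisting by an arbitrary ${}^{P}\!A$-torsor (equivalently, that all the twisted connecting maps still vanish). Once that is in place, the rest is the same kind of formal manipulation of cohomology exact sequences and contracted products already used to prove Theorem~\ref{torsor decomposition}, and the disjointness of the union is inherited directly from that theorem.
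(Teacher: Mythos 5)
Your reductions are correct and match the paper: the case where $H^0(S,{}^{\bar P}\!C)$ is trivial is immediate, the surjectivity claim for a split extension \eqref{extension} follows from the induced section on $H^1$, and the whole corollary does come down to showing that the right action of $H^0(S,{}^{\bar P}\!C)$ on $H^1(S,{}^{P}\!A)$ is trivial, its orbits being the fibres of $H^1(S,{}^{P}\!A)\ra H^1(S,{}^{P}\!B)$. But the substantive case -- a set-theoretic splitting of \eqref{twisted extension} -- is exactly where your argument stops. Vanishing of the connecting map only shows that the \emph{distinguished} class is fixed, and your plan to propagate this by twisting with an arbitrary ${}^{P}\!A$-torsor $Q$ rests on the claim that the $Q$-twist of \eqref{twisted extension} is again set-theoretically split. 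That claim, which you flag as ``the main obstacle,'' is not a technical verification you have postponed: it is equivalent to the triviality of the action you are trying to prove, and it does not follow formally from the hypothesis. A global section $b_0$ of a fibre $({}^{P}\!B)_{c'}$ trivializes it only as a one-sided ${}^{P}\!A$-torsor; under this trivialization the element $c'$ sends a class $[Q]$ to $\mathrm{conj}(b_0)_*[Q]$, the pushforward along conjugation by $b_0$, and nothing forces this to be the identity on classes other than the distinguished one. For instance, for the split extension $1\ra\mu_3\ra\mu_3\rtimes\ZZ/2\ra\ZZ/2\ra 1$ (with $\ZZ/2$ acting by inversion) over any field $K$ with $K^\times\neq K^{\times 3}$, conjugation by the rational involution acts by inversion on $H^1(S,\mu_3)=K^\times/K^{\times 3}$, which is non-trivial on every non-distinguished class; correspondingly, the $Q$-twisted extension for such a class $[Q]$ is \emph{not} set-theoretically split. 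So the missing lemma fails at this level of generality and cannot be supplied by the formal twisting manipulations you describe.

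For comparison, the paper's own proof is much shorter and routes around your connecting-map argument entirely: it appeals to the description of the action given just before Theorem~\ref{torsor decomposition}, namely that $c'$ acts by the contracted product with the fibre $({}^{P}\!B)_{c'}$ and that a class is stabilized whenever this fibre admits a global section; a set-theoretic section then makes every $c'$ act trivially on all of $H^1(S,{}^{P}\!A)$ at once. In other words, the statement your argument leaves open (``every $c'$ fixes every class, not only the distinguished one'') is precisely the assertion the paper invokes, and as the example above shows it carries real content beyond the triviality of the fibres as one-sided torsors: what is actually needed is that conjugation by global sections of ${}^{P}\!B$ acts trivially on $H^1(S,{}^{P}\!A)$. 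As written, therefore, your proposal has a genuine gap at its main step, and filling it requires this additional input rather than a further round of twisting.
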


\begin{proof}
Set $C'={}^{\bar{P}}\!C$ and $A'={}^{P}\!A$.  If $B'= {}^{P}\!B$ is set-theoretically split, all fibers over $c'\in H^0(S,C')$ are trivial torsors, and hence the action of $H^0(S,C')$ on $H^1(S,A')$ is trivial. The same holds, of course, if the group $H^0(S,C')$ itself is trivial.  So the theorem implies the first assertion.

If the projection $\pr\colon B\ra C$ admits a section $s$ that respects the group structure, the induced map on cohomology is right inverse to $H^1(S,B)\ra H^1(S,C)$, so the latter is surjective.
\end{proof}

In particular, for $B=A\rtimes C$ satisfying the assumptions of the corollary, we get a disjoint union
$$
H^1(S,B)=\bigcup H^1\left(S,{}^{P}\!A\right)
$$
running over all $[\bar{P}]\in H^1(S,C)$.  It is convenient to regard its elements as ``pairs'' $(\gamma,\alpha)$ with $\gamma=[P]\in H^1(S,C)$ and $\alpha\in H^1(S,{}^{P}\!A)$. If $H^1(S,\Aut_{A/S})$ is a singleton, the choice of isomorphisms $h\colon {}^{P}\!A\ra A$ indeed identifies the above with the product $H^1(S,C)\times H^1(S,A)$, independently of the $h$.  Note that this carries a canonical group structure if $A$ and $C$ are commutative, which may happen without $B$ being commutative.

\section{Description of the set of   twisted forms}
\label{Set torsors}

Let $K$ be a ground field of characteristic $p>0$, and set $S=\Spec(K)$. Using the general results of the previous section, we seek to compute the first non-abelian cohomology for the iterated semidirect products $\GG_a\rtimes U_n\rtimes \GG_m=\Aut_{X_{p,n}/K}$ and thereby the set of isomorphism classes of twisted forms for $X_{p,n}$.  We are able to do so for $1\leq n\leq 2$.

Throughout, we work over the site $\catC=(\Aff/S)$, endowed with the fppf topology.  Let us start with some general useful facts.

\begin{proposition}
\label{useful facts cohomology}
The following hold for group schemes $G$ of finite type:
\begin{enumerate}
\item\label{ufc-1}
  If\, $G=N\,\widetilde{\times}\,\GG_m$ for some group scheme $N$ of finite type, then the canonical map $H^1(S,N)\ra H^1(S,G)$ is bijective.
\item\label{ufc-2}
  In the special case $G=\GG_a^{\oplus r}\,\widetilde{\times}\,\GG_m$, the set $H^1(S,G)$ is a singleton.
\item\label{ufc-3}
  If\, $G$ is infinitesimal,  the group  $H^0(S,G)$ is trivial.
\end{enumerate}
\end{proposition}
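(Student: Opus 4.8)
The plan is to deduce all three assertions from the formalism of Section~\ref{Non-abelian cohomology}---chiefly the six-term exact sequence and the right action of $H^0(S,C)$ on $H^1(S,A)$ attached to a group extension---together with the vanishing of fppf cohomology of $\GG_m$ and $\GG_a$ over a field. Assertion~\eqref{ufc-3} is immediate: an infinitesimal group scheme $G$ has coordinate ring a finite-dimensional local $K$-algebra with residue field $K$, so $G_\red=\Spec K$ and every morphism $S\ra G$ factors through the identity section; hence $H^0(S,G)=G(K)=\{e\}$.

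For assertion~\eqref{ufc-1} I would write $G=N\,\widetilde{\times}\,\GG_m$ as a set-theoretically split extension $1\ra N\ra G\stackrel{\pr}{\lra}\GG_m\ra 1$ and feed it into the six-term sequence
$$
H^0(S,\GG_m)\stackrel{\partial}{\lra} H^1(S,N)\lra H^1(S,G)\lra H^1(S,\GG_m).
$$
Since $H^1(S,\GG_m)=H^1_\fppf(\Spec K,\GG_m)=\Pic(\Spec K)=0$ (fppf--Zariski comparison for $\GG_m$, i.e.\ Hilbert~90), exactness at $H^1(S,G)$ makes the canonical map $H^1(S,N)\ra H^1(S,G)$ surjective. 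For injectivity I would invoke the orbit description of the six-term sequence: the fibres of $H^1(S,N)\ra H^1(S,G)$ are the orbits of the right $H^0(S,\GG_m)$-action on $H^1(S,N)$, where $c$ sends $[P]$ to $[B_c\wedge^N P]$ with $B_c=\pr^{-1}(c)$. A set-theoretic section $s$ of $\pr$ provides a global point $s(c)\in B_c(S)$ for every $c$, so each $B_c$ is the trivial $N$-torsor and $B_c\wedge^N P\simeq P$; the action is therefore trivial, all fibres are singletons, and the map is bijective. (Equivalently one may quote Corollary~\ref{simplified torsor decomposition}: the index set of the union there, being a subset of the one-point set $H^1(S,\GG_m)$, forces a single term indexed by the trivial torsor.)

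Assertion~\eqref{ufc-2} then follows by applying~\eqref{ufc-1} with $N=\GG_a^{\oplus r}$ and using $H^1(S,\GG_a^{\oplus r})\cong H^1(S,\GG_a)^{\oplus r}$ together with $H^1(S,\GG_a)=H^1_\fppf(\Spec K,\GG_a)=H^1_\Zar(\Spec K,\O)=0$ (fppf--Zariski comparison for the quasi-coherent sheaf $\O$; alternatively, $\GG_a$ is smooth, so its fppf cohomology agrees with the \'etale one, which vanishes by additive Hilbert~90 over the field $K$), so that $H^1(S,G)$ is a singleton. I expect the only step requiring genuine care to be the one in~\eqref{ufc-1}: pinning down precisely the $H^0(S,\GG_m)$-action on $H^1(S,N)$ coming out of the six-term sequence and checking that set-theoretic splitness trivializes every fibre $B_c$; the remaining ingredients are the cited standard vanishing results and routine bookkeeping.
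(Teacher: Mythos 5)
Your argument is correct and follows essentially the same route as the paper: there, (i) is obtained by combining Hilbert~90 ($H^1(S,\GG_m)=0$) with Corollary~\ref{simplified torsor decomposition} --- which is precisely the orbit-triviality mechanism you unpack inline (trivial fibres $B_c$ forcing a trivial $H^0(S,\GG_m)$-action), and which you also note could simply be cited --- while (ii) is likewise deduced from (i) via the vanishing $H^1(S,\GG_a^{\oplus r})=0$ and (iii) from $G_\red=S$. The only difference is cosmetic: you re-derive the relevant special case of Corollary~\ref{simplified torsor decomposition} through the six-term sequence and the $H^0(S,C)$-action instead of quoting it directly.
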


\begin{proof}
We have $H^1(S,\GG_m)=0$ by Hilbert 90, so~\eqref{ufc-1} follows from Corollary~\ref{simplified torsor decomposition}.  With $N=\GG_a^{\oplus r}$ we use \cite[Theorem~III.3.7]{Milne 1980} and Serre's vanishing theorem to get $H^1(S,N)=0$ and~\eqref{ufc-2} follows as well.  Assertion~\eqref{ufc-3} is obvious because an infinitesimal group scheme has $G_\red=S$.
\end{proof}

Note that for $p\geq 5$, the automorphism group scheme for the rational cuspidal curve $X=\Spec K[T^2,T^3]\cup\Spec K[T^{-1}]$ is given by $G=\GG_a\rtimes\GG_m$, so this curve has no twisted forms besides itself. This purely cohomological argument shows again that quasielliptic curves are confined to characteristic $p\leq 3$.  The following observation will also be useful.

\begin{lemma}
\label{isomorphic twisted forms}
Let $G_1$ and $G_2$ be twisted forms of\, $\GG_a$. If they are isomorphic as schemes, they are isomorphic as group schemes.
\end{lemma}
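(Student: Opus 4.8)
The plan is to reduce everything to the elementary fact that a scheme automorphism of the affine line that fixes the origin is automatically a homomorphism of the additive group, and then to descend this observation along $\overline{K}/K$.

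First I would fix a $K$-scheme isomorphism $\phi\colon G_1\to G_2$ and normalise it so that it carries the neutral element $e_1\in G_1(K)$ to $e_2\in G_2(K)$. Since $\phi$ is a morphism of $K$-schemes, the point $\phi\circ e_1$ lies in $G_2(K)$, and composing $\phi$ with the translation automorphism $x\mapsto x-\phi(e_1)$ of the scheme $G_2$ (formed with the group law of $G_2$) gives again a $K$-scheme isomorphism, now sending $e_1$ to $e_2$. It then remains to show that this normalised $\phi$ respects the group laws, i.e.\ that the two morphisms $\phi\circ m_1$ and $m_2\circ(\phi\times\phi)$ from $G_1\times_K G_1$ to $G_2$ coincide.

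Next I would verify this identity after the faithfully flat base change $\Spec\overline{K}\to\Spec K$. Because $G_2$ is separated and of finite type over $K$, the equaliser of the two morphisms above is a closed subscheme of $G_1\times_K G_1$, and it equals the whole product if and only if it does so after $-\otimes_K\overline{K}$. Over the perfect field $\overline{K}$ the group scheme $\GG_a$ has no nontrivial twisted forms, so one may choose group-scheme isomorphisms $\iota_i\colon G_{i,\overline{K}}\to\GG_{a,\overline{K}}$, which send $e_i$ to $0$. Conjugating $\phi_{\overline{K}}$ by the $\iota_i$ yields a $\overline{K}$-algebra automorphism of $\overline{K}[x]$; such an automorphism must send $x$ to an algebra generator, hence to some $ax+b$ with $a\in\overline{K}^{\times}$, and the normalisation $\phi(e_1)=e_2$ forces $b=0$. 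Thus $\iota_2\circ\phi_{\overline{K}}\circ\iota_1^{-1}$ is the homomorphism $x\mapsto ax$ of $\GG_{a,\overline{K}}$, so $\phi_{\overline{K}}$ is a homomorphism; by the descent in the previous step, $\phi$ itself is a homomorphism over $K$, and being a scheme isomorphism it is an isomorphism of group schemes.

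I do not expect a serious obstacle here. The only points that need a little care are the passage from an algebra automorphism of $\overline{K}[x]$ to the affine form $x\mapsto ax+b$ (a degree argument, since the image of $x$ must generate $\overline{K}[x]$), the bookkeeping of neutral elements under the chosen isomorphisms $\iota_i$, and the appeal to the standard fact that $\GG_a$ admits no nontrivial forms over a perfect field; granting these, the faithfully flat descent of the homomorphism identity along $\overline{K}/K$ is immediate.
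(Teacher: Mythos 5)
Your argument is correct and follows essentially the same route as the paper's proof: normalize by a translation so that $e_1\mapsto e_2$, reduce to the algebraically closed case where both forms become $\GG_a$, and use that an origin-preserving automorphism of the coordinate ring sends the coordinate to $\lambda T$ with $\lambda\neq 0$, hence is additive. The only difference is that you spell out the reduction step (the paper's ``we may assume $K=K^{\alg}$'') via the explicit equalizer/faithfully flat descent argument, which is a welcome but inessential elaboration.
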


\begin{proof}
Let $f\colon G_1\ra G_2$ be an isomorphism of schemes. Composing with a translation, we may assume $f(e_1)=e_2$.  To verify that $f$ respects the group law, we may assume $K=K^\alg$, and this reduces to the case $G_1=G_2=\GG_a$.  The induced map $\varphi\colon K[T]\ra K[T]$ on the coordinate ring is given by $\varphi(T)=\lambda T + \mu$ for some $\lambda,\mu\in K$.  We have $\lambda\neq 0$ because $f$ is non-constant, and $\mu=0$ because $f$ respects the origin. So for each $\alpha\in\GG_a(R)$, we have $f(\alpha)=\lambda\alpha$, which respects the group laws.
\end{proof}

For each pair $\Phi,\Psi\in K[u]$ of additive polynomials with $\gcd(\Phi,\Psi)\neq 0$ (in other words, not both polynomials vanish), the resulting homomorphism $(\Phi,-\Psi)\colon \GG_a^{\oplus 2} \ra\GG_a$ given by $(u,v)\mapsto \Phi(u)-\Psi(v)$ is an epimorphism. The short exact sequence
\begin{equation}
\label{extension unipotent}
0\lra U_{\Phi,\Psi}\lra \GG_a^{\oplus 2}\xrightarrow{(\Phi,-\Psi)}\GG_a\lra 0
\end{equation}
defines a unipotent group scheme $U_{\Phi,\Psi}$, and the resulting long exact sequence yields
\begin{equation}
\label{cohomology phi psi}
H^1(S,U_{\Phi,\Psi})= K/\left\{\Phi(u)-\Psi(v)\mid u,v\in K\right\}.
\end{equation}
By Russell's theorem \cite[Theorem~2.1]{Russell 1970}, every twisted form of the additive group is isomorphic to $U_{\Phi,\Psi}$ with $\Phi(u)=u^{p^n}$ for some $n\geq 0$, and $\Psi(u)$ separable. The following sheds further light on this.

\begin{proposition}
\label{twist additive group}
The unipotent group scheme $U_{\Phi,\Psi}$ is a twisted form of\, $\GG_a$ if and only if $\gcd(\Phi,\Psi)=u$ inside the euclidean domain $K[u]$.
\end{proposition}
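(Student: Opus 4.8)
The plan is to translate ``being a twisted form of $\GG_a$'' into a statement about geometric irreducibility of the affine plane curve cut out by $\Phi(u)-\Psi(v)$, and then to settle that statement with the factorization theory of additive polynomials. First I would pass to the algebraic closure: the gcd is unchanged under field extension, and $U_{\Phi,\Psi}\otimes\bar K=U_{\Phi\otimes\bar K,\,\Psi\otimes\bar K}$, so it suffices to argue over $K=\bar K$. Over the perfect field $\bar K$ a reduced group scheme of finite type is smooth, and a smooth connected one-dimensional unipotent group over an algebraically closed field is $\GG_a$; since $U_{\Phi,\Psi}$ is always one-dimensional, being the kernel of the epimorphism $(\Phi,-\Psi)$, the assertion ``$U_{\Phi,\Psi}$ is a twisted form of $\GG_a$'' becomes ``$U_{\Phi,\Psi}$ is integral'', i.e.\ ``$\Phi(u)-\Psi(v)$ is irreducible in $\bar K[u,v]$''.

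Next I would reduce to the case that one of $\Phi,\Psi$, say $\Psi$, is separable. The cases $\Phi=0$ or $\Psi=0$ are settled by inspection (then $U_{\Phi,\Psi}$ is a product of $\GG_a$ with a kernel of an additive polynomial, and one checks both sides of the equivalence directly). If neither $\Phi$ nor $\Psi$ is separable, then over $\bar K$ one may write $\Phi(u)=A(u)^p$ and $\Psi(v)=B(v)^p$ with $A,B$ additive (taking $p$-th roots of coefficients), so $\Phi(u)-\Psi(v)=(A(u)-B(v))^p$ is a proper $p$-th power, hence reducible; and simultaneously $u^p$ divides $\gcd(\Phi,\Psi)$, so both sides of the equivalence fail. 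Thus we may assume $\Psi$ separable and $\Phi\neq 0$.

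With $\Psi$ separable, $\Psi\colon\GG_a\to\GG_a$ is an \'etale isogeny, the scheme $U_{\Phi,\Psi}$ is smooth (the partial derivative of $\Phi(u)-\Psi(v)$ with respect to $v$ is the nonzero constant $-\Psi'(0)$), and the first projection realises $U_{\Phi,\Psi}$ as the finite \'etale $\ker\Psi$-cover of $\AA^1$ obtained by pulling back $\Psi$ along $\Phi\colon\AA^1\to\GG_a$; equivalently $U_{\Phi,\Psi}$ is the extension of $\GG_a$ by the \'etale group $\ker\Psi$ classified by $\Phi$. So $U_{\Phi,\Psi}$ is a twisted form of $\GG_a$ if and only if this \'etale cover is connected. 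I would then decompose $\Psi$ over $\bar K$ into a composition of separable isogenies of degree $p$ by choosing a full flag in the $\FF_p$-vector space $\ker\Psi(\bar K)$, so that the cover becomes a tower of Artin--Schreier-type $\ZZ/p$-covers; Artin--Schreier theory expresses connectedness at each stage by the failure of a suitable additive polynomial to lie in the image of the operator $t\mapsto t^p-t$, i.e.\ by the failure of a left-divisibility relation in the ring of additive polynomials. Reassembling the tower, this says precisely that $\Phi$ and $\Psi$ have no common ``left factor'' of positive degree in the ring of additive polynomials, which is the content of the condition $\gcd(\Phi,\Psi)=u$; combining this with the reductions above finishes the proof.

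The delicate point is the final step: turning ``connectedness of the iterated Artin--Schreier cover $U_{\Phi,\Psi}$'' into the clean divisibility statement and carrying out the bookkeeping of the tower while keeping scrupulous track of the left/right asymmetry of the (non-commutative) ring of additive polynomials. In particular the \'etale subgroup $\ker\Psi$ inside $U_{\Phi,\Psi}$ need not meet the identity component trivially, so the passage to a single $\ZZ/p$-cover must be organised with care; verifying that the upshot is exactly $\gcd(\Phi,\Psi)=u$ is where the real work lies. The reduction to $\bar K$, the reduction to the separable case, and the automatic smoothness are all routine.
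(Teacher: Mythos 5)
There is a genuine gap. Your argument reduces the main case ($\Psi$ separable, $\Phi\neq 0$) to the claim that the iterated Artin--Schreier cover $U_{\Phi,\Psi}\to\AA^1$ is connected if and only if $\gcd(\Phi,\Psi)=u$, and then you explicitly defer this step (``where the real work lies''). But in this case that equivalence \emph{is} the proposition; nothing before it is more than routine reformulation, so the proof is not actually given. Worse, the identification you gesture at is not the one you need: connectedness of the pullback of the $\ker\Psi$-cover along $\Phi$ is governed by whether $\Phi$ lifts through an intermediate quotient of $\Psi$, i.e.\ by common \emph{left} (outer) factors $\Phi=c\circ\Phi_1$, $\Psi=c\circ\Psi_1$ in the twisted polynomial ring, whereas the condition $\gcd(\Phi,\Psi)=u$ in the commutative ring $K[u]$ is equivalent (over $\bar K$) to the triviality of the scheme $\ker\Phi\cap\ker\Psi$, i.e.\ to the absence of common \emph{right} (inner) factors. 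You flag the left/right asymmetry yourself but never bridge it, and in the non-commutative ring $K[F;\sigma]$ these two notions do not coincide in general, so the bridge is a substantive lemma, not bookkeeping. Until that equivalence is proved, the proposal establishes neither implication of the main case.

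For comparison, the paper sidesteps all of this. For the implication $\gcd(\Phi,\Psi)=u\Rightarrow U_{\Phi,\Psi}\simeq\GG_a$ it uses a B\'ezout identity $a\Phi-b\Psi=u$ in $K[u]$ to produce a scheme-theoretic (non-homomorphic) section of $(\Phi,-\Psi)$, hence an isomorphism of schemes $\GG_a^{\oplus 2}\simeq U_{\Phi,\Psi}\times\AA^1$; Zariski cancellation then gives $U_{\Phi,\Psi}\simeq\AA^1$ as a scheme, and Lazard's theorem upgrades this to an isomorphism of group schemes --- no irreducibility or fundamental-group analysis is needed. For the converse it exploits that the root sets of $\Phi$ and $\Psi$ over $\bar K$ are subgroups of $(\bar K,+)$: a non-trivial gcd yields a common separable degree-$p$ inner factor $h$ (or forces both polynomials to be $p$-th powers), through which $(\Phi,-\Psi)$ factors, whence the kernel is disconnected or non-reduced. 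Your degenerate-case reductions and the passage to $\bar K$ agree with the paper and are fine; it is the core equivalence in the separable case that remains unproven in your plan.
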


\begin{proof}
  It suffices to treat the case that $K$ is algebraically closed. Set $U=U_{\Phi,\Psi}$.  First suppose that there are $a,b\in k[u]$ with $a\Phi-b\Psi=u$.  These yield a section for $(\Phi,-\Psi)\colon \GG_a^{\oplus 2}\ra\GG_a$, defined via $u\mapsto (a(u),b(u))$, which does not have to preserve the group laws.  It induces an identification $\GG_a^{\oplus 2}=U\times\GG_a$ of schemes.  In turn, the coordinate ring $A=\Gamma(U,\O_U)$ has the property $A[y]=K[x,y]$.  According to Zariski cancellation (see \cite[Corollary~2.8]{Abhyankar; Heinzer; Eakin 1972}), the underlying scheme is isomorphic to the affine line $\AA^1$.  By Lazard's theorem (see \cite[Section~IV.4, Theorem~4.1]{Demazure; Gabriel 1970}), we must have $U\simeq \GG_a$ as group schemes.

Conversely, suppose $\gcd(\Phi,\Psi)\neq u$.  Since $K$ is algebraically closed, we have $\Phi(u)=\prod_{\omega\in A}(u-\omega)^{p^m}$ and $\Psi(u) = \prod_{\omega\in B}(u-\omega)^{p^n}$ for some exponents $m,n\geq 0$ and some finite subgroups $A,B\subset K$. Their intersection is non-zero, by the assumption on the gcd.  Consequently, we can write $\Phi(u)= \Phi_1(h(u))$ and $\Psi(u)=\Psi_1(h(u))$ for some additive polynomial of the form $h(u)=\prod_{i=0}^{p-1}(u-i\omega_0)$, with some non-zero $\omega_0\in K$.  In turn, the projection $(\Phi,-\Psi)\colon \GG_a^{\oplus 2}\ra\GG_a$ factors over the morphism $h\colon \GG_a\ra\GG_a$.  So the kernel $U_{\Phi,\Psi}$ is disconnected or non-reduced.
\end{proof}
 
\begin{proposition}
\label{cohomology iterated semidirect product}
For each group scheme of the form $G=\GG_a\rtimes Q\rtimes\GG_m$, where $Q$ is any infinitesimal group scheme of finite type, we have a canonical identification
$$
H^1(S,G) = \bigcup_{\alpha\in H^1(S,Q)}  K/\{\Phi_\alpha(u)-\Psi_\alpha(v)\mid u,v\in K\}
$$
for certain   additive polynomials $\Phi_\alpha,\Psi_\alpha\in K[u]$ with $\gcd(\Phi_\alpha,\Psi_\alpha)=u$.
\end{proposition}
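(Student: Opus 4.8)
The plan is to strip the iterated semidirect product $G=\GG_a\rtimes Q\rtimes\GG_m$ down to its additive layer, using the decomposition results of Section~\ref{Non-abelian cohomology} together with Russell's classification of forms of $\GG_a$. First I would dispose of the multiplicative factor. Writing $B=\GG_a\rtimes Q$, we have $G=B\rtimes\GG_m$, which is in particular of the form $N\,\widetilde{\times}\,\GG_m$, so Proposition~\ref{useful facts cohomology}\eqref{ufc-1} gives a bijection $H^1(S,B)\to H^1(S,G)$; it therefore suffices to describe $H^1(S,B)$.

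Next I would apply the six-term formalism to the \emph{split} exact sequence $1\to\GG_a\to B\to Q\to 1$, in the roles $A=\GG_a$ and $C=Q$. For any $Q$-torsor $\bar P$, the twist ${}^{\bar P}Q$ is a form of $Q$; since twisted forms of infinitesimal group schemes are infinitesimal (having a single point is an fppf-local property), ${}^{\bar P}Q$ is again infinitesimal, so $H^0(S,{}^{\bar P}Q)$ is trivial by Proposition~\ref{useful facts cohomology}\eqref{ufc-3}. Hence the hypotheses of Corollary~\ref{simplified torsor decomposition} are met, and because $B=\GG_a\rtimes Q$ the underlying extension is split, so the map $H^1(S,B)\to H^1(S,Q)$ is surjective. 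The corollary then yields a disjoint union $H^1(S,B)=\bigcup_{\alpha\in H^1(S,Q)}H^1(S,{}^{P}\GG_a)$, where for each class $\alpha=[\bar P]$ a $B$-torsor $P$ lifting it has been fixed and ${}^{P}\GG_a$ denotes the twist of $\GG_a$ under the conjugation action of $B$ on its normal subgroup.

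It then remains to identify the groups ${}^{P}\GG_a$. Each is locally isomorphic to $\GG_a$, hence is a one-dimensional commutative unipotent twisted form of the additive group. By Russell's theorem \cite[Theorem~2.1]{Russell 1970} it is isomorphic, as a group scheme, to $U_{\Phi_\alpha,\Psi_\alpha}$ for some additive $\Phi_\alpha(u)=u^{p^{m_\alpha}}$ and some separable additive $\Psi_\alpha$, and by Proposition~\ref{twist additive group} the fact that $U_{\Phi_\alpha,\Psi_\alpha}$ is a form of $\GG_a$ is precisely the condition $\gcd(\Phi_\alpha,\Psi_\alpha)=u$ in $K[u]$. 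Substituting the formula \eqref{cohomology phi psi}, namely $H^1(S,{}^{P}\GG_a)=H^1(S,U_{\Phi_\alpha,\Psi_\alpha})=K/\{\Phi_\alpha(u)-\Psi_\alpha(v)\mid u,v\in K\}$, into the union and transporting back along the first reduction gives the asserted description.

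No genuinely new idea beyond Section~\ref{Non-abelian cohomology} and Russell's theorem is needed; the one point demanding care is the verification that Corollary~\ref{simplified torsor decomposition} applies, i.e.\ that every twist ${}^{\bar P}Q$ remains infinitesimal so that $H^0(S,{}^{\bar P}Q)=1$ and the double-coset description of Theorem~\ref{torsor decomposition} collapses to a plain disjoint union. I would also flag that the identification is ``canonical'' only once a Russell presentation of each ${}^{P}\GG_a$ has been chosen: the partition of $H^1(S,G)$ indexed by $H^1(S,Q)$ is intrinsic, but the polynomials $\Phi_\alpha,\Psi_\alpha$ are not unique. Pinning them down explicitly is the laborious task, deferred to the cases $n\le 2$ treated afterwards.
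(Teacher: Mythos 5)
Your argument is exactly the paper's proof, just written out in more detail: reduce to $\GG_a\rtimes Q$ via Proposition~\ref{useful facts cohomology}\eqref{ufc-1}, apply Corollary~\ref{simplified torsor decomposition} to the split sequence $1\to\GG_a\to\GG_a\rtimes Q\to Q\to 1$ (using that twists of the infinitesimal $Q$ have trivial $H^0$), and identify each ${}^{P}\GG_a$ via Russell's theorem, Proposition~\ref{twist additive group}, and \eqref{cohomology phi psi}. Your explicit checks (infinitesimality of ${}^{\bar P}Q$, surjectivity onto $H^1(S,Q)$ from splitness, non-uniqueness of the Russell presentation) are correct refinements of steps the paper leaves implicit.
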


\begin{proof}
By Proposition~\ref{useful facts cohomology}, the canonical map $H^1(S,\GG_a\rtimes Q)\ra H^1(S,G)$ is bijective.  Since $Q$ is infinitesimal, we can apply Proposition~\ref{simplified torsor decomposition}, and the assertion follows with \eqref{cohomology phi psi}.
\end{proof}

Roughly speaking, to understand this cohomology of $G$, one has to understand the cohomology of $Q$ and the dependence of the additive polynomials $\Phi_\alpha,\Psi_\alpha$ on the class $\alpha$. We now seek to unravel this with $G=\GG_a\rtimes U_n\rtimes\GG_m$.  For $n=1$, the term in the middle becomes $U_1=\alpha_p$.  The short exact sequence $0\ra\alpha_{p^n}\ra\GG_a\stackrel{F^n}{\ra}\GG_a\ra 0$ yields an identification $H^1(S,\alpha_{p^n})=K/K^{p^n}$ for every $n\geq 1$.  The dependence on the additive polynomials can be described as follows.

\begin{proposition}
\label{dependence n=1}
For the $\alpha_p$-torsor $P=\Spec K[y]\,/(y^p-\alpha)$, the additive polynomials $\Phi(u)=u^p$ and $\Psi(v)=v-\alpha v^p$ give the twisted form ${}^P\!\GG_a=U_{\Phi,\Psi}$.
\end{proposition}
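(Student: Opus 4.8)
The plan is to compute the twisted form ${}^P\!\GG_a$ directly as a quotient scheme and then recognize it as $U_{\Phi,\Psi}$. First I unwind the definitions. By Proposition~\ref{useful facts cohomology}\eqref{ufc-1} the factor $\GG_m$ in $\GG_a\rtimes U_1\rtimes\GG_m$ is irrelevant for $H^1$, so the twisted form in question is the twist of the normal subgroup $\GG_a\subset\GG_a\rtimes U_1$ by the $\alpha_p$-torsor $P=\Spec K[y]\,/(y^p-\alpha)$, formed with respect to the conjugacy action of $U_1=\alpha_p$ on $\GG_a$. By Proposition~\ref{iterated normalization} this conjugacy action is, on $R$-valued points, $t\cdot\beta=\beta+t\beta^p$ (the element $1+tF\in U_1(R)$ carries the translation by $\beta$ to the translation by $\beta+t\beta^p$). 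Since $\alpha_p$ is finite and $P\times\GG_a$ is affine, ${}^P\!\GG_a=\alpha_p\backslash(P\times\GG_a)=\Spec A^{\alpha_p}$, where $A=K[y,\beta]\,/(y^p-\alpha)$ and the coaction of $\alpha_p=\Spec K[t]\,/(t^p)$ attached to the diagonal action is $\rho(y)=y\otimes1+1\otimes t$ and $\rho(\beta)=\beta\otimes1+\beta^p\otimes t$.

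Next I produce the invariants. Using $t^p=0$ one checks that $v:=\beta^p$ and $u:=\beta-y\beta^p$ are both $\rho$-fixed, and that $u^p=\beta^p-y^p\beta^{p^2}=v-\alpha v^p$, which is precisely the defining relation of $U_{\Phi,\Psi}$ for $\Phi(u)=u^p$ and $\Psi(v)=v-\alpha v^p$. The heart of the argument is to show that $u,v$ already generate all of $A^{\alpha_p}$ with no further relations, i.e.\ $A^{\alpha_p}=B:=K[u,v]\,/(u^p-v+\alpha v^p)$. For this I use the substitution $\beta=u+yv$, which gives $A=B[y]$ with $y^p=\alpha\in B$; a rank count over $K[v]$ (both $A$ and $B[Y]\,/(Y^p-\alpha)$ are free of rank $p^2$, using that $A$ has $K[v]$-basis $\{y^i\beta^j\mid 0\le i,j\le p-1\}$) identifies $A$ with $B\otimes_K\bigl(K[y]\,/(y^p-\alpha)\bigr)=B\otimes_K\Gamma(P)$ as $\alpha_p$-comodules, the coaction being trivial on the factor $B$ and the canonical one on $\Gamma(P)$. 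Taking invariants and using $\Gamma(P)^{\alpha_p}=K$ (because $P$ is an $\alpha_p$-torsor over $\Spec K$) then yields $A^{\alpha_p}=B$. I expect this module-rank bookkeeping to be the only genuinely technical point; a fallback, should it become awkward, is to pass to $K^{\alg}$, trivialize $P$ via $\alpha=a^p$, and verify that the morphism $(u,v)$ becomes the identity $\GG_a\to\GG_a$ after the resulting trivializations of ${}^P\!\GG_a$ and of $U_{\Phi,\Psi}$ (the latter exists by Proposition~\ref{twist additive group}, since $\gcd(u^p,u-\alpha u^p)=u$).

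Finally I match the group laws. As the $\alpha_p$-action on $\GG_a$ is by group automorphisms, ${}^P\!\GG_a$ inherits a group structure whose multiplication, on fppf-local points $(y,\beta)$ modulo $\alpha_p$, is $(y,\beta_1)\cdot(y,\beta_2)=(y,\beta_1+\beta_2)$; since $u=\beta-y\beta^p$ and $v=\beta^p$ depend additively on $\beta$, the morphism $(u,v)\colon{}^P\!\GG_a\to\GG_a^{\oplus2}$ is a homomorphism of group schemes landing in $U_{\Phi,\Psi}=\Kernel\bigl((\Phi,-\Psi)\colon\GG_a^{\oplus2}\to\GG_a\bigr)$, by the relation $u^p=v-\alpha v^p$. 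On coordinate rings this homomorphism is the identity map of $K[u,v]\,/(u^p-v+\alpha v^p)$, so ${}^P\!\GG_a=U_{\Phi,\Psi}$ as group schemes, as claimed.
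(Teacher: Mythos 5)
Your proposal is correct, and it reaches the same invariants as the paper ($v=x^p$ and $u=x-x^py$ in the paper's coordinates, with the relation $u^p=v-\alpha v^p$), but the two technical steps are handled differently. The paper shows that the subring generated by $u,v$ is all of the invariant ring by a normality-plus-degree argument: the ring $K[u,v]\,/(u^p-v+\alpha v^p)$ is regular by the Jacobian criterion, and the tower $K(x^p)\subset\Frac(A')\subset\Frac(A)\subset\Frac(B)$ of degree $p^2$ with outer steps of degree $p$ forces $A'=A$; it then upgrades the scheme isomorphism to a group-scheme isomorphism by invoking Lemma~\ref{isomorphic twisted forms}. You instead exhibit the explicit decomposition $A\simeq B\otimes_K\Gamma(P,\O_P)$ via the substitution $\beta=u+yv$, prove injectivity by the rank-$p^2$ count over $K[v]$ (a surjection of free modules of equal finite rank over a commutative ring is injective), and compute invariants by flatness together with $\Gamma(P,\O_P)^{\alpha_p}=K$; you then verify directly that $u$ and $v$ are additive in $\beta$, so the comparison map to $U_{\Phi,\Psi}$ is a homomorphism, making Lemma~\ref{isomorphic twisted forms} unnecessary. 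Your route is more self-contained and makes the comodule structure transparent (it literally displays $P\times\GG_a$ as the trivialization of the torsor over $B$), at the cost of some module bookkeeping; the paper's route is shorter given the normality trick and reuses a lemma it needs anyway. Two small remarks: your opening reduction via Proposition~\ref{useful facts cohomology} is not needed for this proposition (the twist is formed for the conjugacy action of $U_1=\alpha_p$ on $\GG_a$ regardless), and your descriptions of the quotient as $\Spec A^{\alpha_p}$ and of the group law on fppf-local representatives $(y,\beta)$ are stated tersely but are standard and correct, since after a faithfully flat extension any two points of ${}^P\!\GG_a$ can be represented with the same base point of the torsor.
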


\begin{proof}
Set $B=K[x,y]\,/(y^p-\alpha)$.  By definition, the coordinate ring for ${}^P\!\GG_a$ is the subring $A\subset B$ of elements that are invariant under $x\mapsto x+\lambda x^p$ and $y\mapsto y+\lambda$, for all group elements $\lambda\in \alpha_p(R)$ and all rings~$R$.  Clearly, $v=x^p$ and $u=x-x^py$ are such invariants, satisfying the relation $u^p=v-\alpha v^p$.  Its partial derivatives generate the unit ideal, and we conclude that the subring $A'\subset A$ generated by $u$ and $v$ is regular and one-dimensional and can be identified with the residue class ring $K[u,v]\,/(u^p-v+\alpha v^p)$.  The composite extension $K(x^p)\subset\Frac(A')\subset\Frac(A)\subset\Frac(B)$ has degree~$p^2$, and the outer steps have degree~$p$.  Consequently, $A'=A$. The assertion now follows from Lemma~\ref{isomorphic twisted forms}.
\end{proof}

To tackle the case $n=2$, we use the   short exact sequence
$$
0\lra\alpha_p\lra U_2\lra \alpha_{p^2}\lra 0,
$$
where the inclusion on the left is $\lambda\mapsto 1+\lambda x^{p^2}$ and the surjection on the right $(1+\lambda_1x^p+\lambda_2x^{p^2})\mapsto \lambda_1$.  Given $\alpha,\beta \in K$, the finite scheme $P=P_{\alpha,\beta}$ defined by
\begin{equation}
\label{non-commutative torsor}
P(R)=\left\{(y,z)\in R^2\mid \text{$y^{p^2}=\alpha$ and $z^p=\beta +\alpha y^p$}\right\}
\end{equation}
carries a $U_2$-action via the formula $(\lambda_1,\lambda_2)\cdot (y,z) = ( \lambda_1+y, \lambda_2+z +\lambda_1 y^p)$.  One easily checks that this indeed takes values in $P(R)$, that it satisfies the axioms for group actions, and that the action is free and transitive. The induced $\alpha_{p^2}$-torsor $\bar{P}$ is obtained from $P$ as a quotient by $\alpha_p$, in other words, by the action of $\lambda_2$. This yields $\bar{P}(R)=\{y\in R\mid y^{p^2}=\alpha\} $.  In turn, we get the description $H^1(S,U_2) = \bigcup_{K/K^{p^2}} K/K^p$.  It remains to express the twisted form ${}^P\!\GG_a$ in terms of additive polynomials.

\begin{proposition}
\label{dependence n=2}
For the $U_2$-torsor $P$ as above, the additive polynomials
$$
\Phi(u)=u^{p^2}\quadand \Psi(u)= u - \alpha u^p - \beta^p u^{p^2}
$$
give the twisted form  ${}^P\!\GG_a=U_{\Phi,\Psi}$.
\end{proposition}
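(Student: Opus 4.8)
The plan is to follow the template of the proof of Proposition~\ref{dependence n=1}: produce two explicit invariants in the coordinate ring of $P\times\GG_a$, check that they satisfy the desired additive‑polynomial relation, and then run a degree/normality argument to see that they generate the invariant subring. First I would write $B = K[x,y,z]/(y^{p^2}-\alpha,\ z^p-\beta-\alpha y^p)$, with $x$ the coordinate on $\GG_a$ and $(y,z)$ the coordinates on $P$ coming from \eqref{non-commutative torsor}, so that the coordinate ring of ${}^P\!\GG_a$ is the subring $A\subset B$ fixed by the diagonal $U_2$-action. By Proposition~\ref{iterated normalization} the conjugacy action of $U_2$ on $\GG_a$ is $x\mapsto x+\lambda_1 x^p+\lambda_2 x^{p^2}$, and on $P$ it is $y\mapsto y+\lambda_1$, $z\mapsto z+\lambda_2+\lambda_1 y^p$, for all $(\lambda_1,\lambda_2)$ with $\lambda_1^{p^2}=\lambda_2^p=0$.

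The main computational step is to guess the invariant $u$. Here $v=x^{p^2}$ is visibly fixed, and the correct analogue of the element $x-x^p y$ from the case $n=1$ turns out to be
$$
u = x - yx^p + (y^{p+1}-z)\,x^{p^2}.
$$
I would verify invariance directly: the naive element $x-yx^p$ transforms to $(x-yx^p)+x^{p^2}\bigl(\lambda_2-\lambda_1^p y-\lambda_1^{p+1}\bigr)$, and the correction term $(y^{p+1}-z)x^{p^2}$ cancels this exactly because $z\mapsto z+\lambda_2+\lambda_1 y^p$ and $y^{p+1}\mapsto y^{p+1}+\lambda_1 y^p+\lambda_1^p y+\lambda_1^{p+1}$. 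Then, using that Frobenius is additive, $y^{p^2}=\alpha$, and $z^{p^2}=(\beta+\alpha y^p)^p=\beta^p+\alpha^{p+1}$, one computes
$$
u^{p^2} = v - y^{p^2}v^p + \bigl(y^{p^2(p+1)}-z^{p^2}\bigr)v^{p^2} = v - \alpha v^p - \beta^p v^{p^2} = \Psi(v),
$$
which produces a $K$-algebra homomorphism $A' := K[u,v]/\bigl(\Phi(u)-\Psi(v)\bigr)\to A$.

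It remains to show this is an isomorphism and to recognize $A'$ as the coordinate ring of $U_{\Phi,\Psi}$. Since $\Psi(v)$ has a nonzero linear term it is not a $p$-th power in $K(v)$, so $\Phi(u)-\Psi(v)=u^{p^2}-\Psi(v)$ is irreducible in $K(v)[u]$; hence $A'$ is an integral domain, and it is regular because $\partial\bigl(\Phi(u)-\Psi(v)\bigr)/\partial v=-1$. For $A=A'$ I would count ranks over $K[v]=K[x^{p^2}]$: the module $B$ is free of rank $p^3$ over $K[x]$ and $K[x]$ is free of rank $p^2$ over $K[v]$, so $B$ is free of rank $p^5$ over $K[v]$; on the other hand the diagonal action of $U_2$ on $P\times\GG_a$ is free, so $\Spec B\to\Spec A$ is a $U_2$-torsor and $B$ is locally free of rank $|U_2|=p^3$ over $A$, forcing $A$ to have generic rank $p^2$ over $K[v]$, the same as $A'$. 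As $B$, hence $A$, is torsion-free over $K[v]$, localizing at the generic point of $\Spec K[v]$ shows $A'$ and $A$ share their fraction field; since $A$ is finite over the normal domain $A'$, we conclude $A=A'$. Thus ${}^P\!\GG_a$ and $U_{\Phi,\Psi}$ have the same coordinate ring $K[u,v]/\bigl(\Phi(u)-\Psi(v)\bigr)$, both are twisted forms of $\GG_a$ (the latter by Proposition~\ref{twist additive group}, since $\gcd(\Phi,\Psi)=u$), and Lemma~\ref{isomorphic twisted forms} upgrades the scheme isomorphism to an isomorphism of group schemes. The only real obstacle is finding $u$: the naive $x-yx^p$ fails, and one is led to the correction term $(y^{p+1}-z)x^{p^2}$ precisely by tracking the obstruction $x^{p^2}(\lambda_2-\lambda_1^p y-\lambda_1^{p+1})$ and absorbing it with $z$ and $y^{p+1}$; once $u$ is in hand the rest is bookkeeping parallel to the case $n=1$.
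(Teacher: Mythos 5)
Your proposal is correct and follows essentially the same route as the paper: the same ring $B=K[x,y,z]/(y^{p^2}-\alpha,\,z^p-\beta-\alpha y^p)$, the identical invariants $v=x^{p^2}$ and $u=x-x^py-x^{p^2}z+x^{p^2}y^{p+1}$, the same relation $u^{p^2}=v-\alpha v^p-\beta^p v^{p^2}$, and the same appeal to Lemma~\ref{isomorphic twisted forms} at the end. The only difference is cosmetic: where the paper concludes ``as in the preceding proof'' via the chain of fraction-field degrees over $K(x^{p^2})$, you verify $A'=A$ by a rank count over $K[v]$ using that $\Spec B\to\Spec A$ is a $U_2$-torsor of degree $p^3$, which is the same argument in slightly different bookkeeping.
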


\begin{proof}
Set $B=K[x,y,z]\,/(y^{p^2}-\alpha,z^p-\beta-\alpha y^p)$.  The coordinate ring for ${}^P\!\GG_a$ is the subring $A\subset B$ of elements that are invariant under
$$
x\longmapsto x+\lambda_1 x^p+\lambda_2x^{p^2},\quad   y\longmapsto y+\lambda_1,\quadand z\longmapsto z + \lambda_2 + \lambda_1y^p
$$
for all group elements $(\lambda_1,\lambda_2)\in U_2(R)$ and all rings $R$.  One easily checks that $v=x^{p^2}$ and $u=x-x^py-x^{p^2}z +x^{p^2} y^{p+1}$ are invariant and that these invariants satisfy the relation $u^{p^2} = v - \alpha v^p - \beta^p v^{p^2}$.  The argument concludes as in the preceding proof.
\end{proof}

Note that the invariant $u$ can be found by starting with the non-invariant $x$ and successively adding monomials to cancel non-invariance.  Collecting all the above, we have determined the non-abelian cohomology  for $G_n=\GG_a\rtimes U_n\rtimes\GG_m$ in the cases $1\leq n\leq 2$. 

\begin{theorem}
\label{cohomology set}
With the above notation, we have 
$$
H^1(S,G_1) = \bigcup_{\alpha}  K/\left\{u^p-v+\alpha v^p\mid u,v\in K\right\},
$$
where the union runs over all $\alpha\in K/K^p$, and 
$$
H^1(S,G_2)= \bigcup_{(\alpha,\beta)} K/\left\{u^{p^2}- v - \alpha v^p - \beta^p v^{p^2}\mid u,v\in K\right\},
$$
where the union runs over $(\alpha,\beta)\in \bigcup_{K/K^{p^2}} K/K^p$ with $\alpha\in K/K^{p^2}$ and $\beta\in K/K^p$.
\end{theorem}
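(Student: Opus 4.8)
The plan is to obtain both identities by specializing Proposition~\ref{cohomology iterated semidirect product} to the infinitesimal group scheme $Q=U_1$, respectively $Q=U_2$, and then inserting the explicit additive polynomials produced in Propositions~\ref{dependence n=1} and~\ref{dependence n=2}. Since $U_n$ is infinitesimal of finite type, that proposition applies to $G_n=\GG_a\rtimes U_n\rtimes\GG_m$ and yields
$$H^1(S,G_n)=\bigcup_{\alpha\in H^1(S,U_n)}K\big/\{\Phi_\alpha(u)-\Psi_\alpha(v)\mid u,v\in K\},$$
where, for a $U_n$-torsor $P_\alpha$ of class $\alpha$, the additive polynomials $\Phi_\alpha,\Psi_\alpha$ are the ones characterized by ${}^{P_\alpha}\!\GG_a=U_{\Phi_\alpha,\Psi_\alpha}$ and the quotient on the right is read off from \eqref{cohomology phi psi}. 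So the only remaining work is to parametrize $H^1(S,U_n)$ by concrete torsors and to name the corresponding $\Phi_\alpha,\Psi_\alpha$ — all of which is already in hand.

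First I would dispose of $n=1$. Here $U_1=\alpha_p$, and the sequence $0\to\alpha_p\to\GG_a\xrightarrow{F}\GG_a\to0$ identifies $H^1(S,\alpha_p)=K/K^p$, with the class of $\alpha$ represented by $P=\Spec K[y]\,/(y^p-\alpha)$. For that torsor Proposition~\ref{dependence n=1} gives $\Phi(u)=u^p$ and $\Psi(v)=v-\alpha v^p$, hence $\Phi(u)-\Psi(v)=u^p-v+\alpha v^p$; plugging this into the display above reproduces the first formula verbatim.

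Next I would treat $n=2$. The input is the description $H^1(S,U_2)=\bigcup_{K/K^{p^2}}K/K^p$ obtained before the theorem from $0\to\alpha_p\to U_2\to\alpha_{p^2}\to0$: the base contributes $H^1(S,\alpha_{p^2})=K/K^{p^2}$, and the fibre over each class is a copy of $K/K^p$ because $\Aut(\alpha_p)=\GG_m$ has vanishing $H^1$ by Hilbert~90; the pair $(\alpha,\beta)$ is realized by the explicit $U_2$-torsor $P_{\alpha,\beta}$ of \eqref{non-commutative torsor}, whose action one checks to be free and transitive and whose image in the base is the class of $\alpha$, so these torsors exhaust $H^1(S,U_2)$. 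Then Proposition~\ref{dependence n=2} gives $\Phi(u)=u^{p^2}$ and $\Psi(v)=v-\alpha v^p-\beta^p v^{p^2}$, so $\Phi(u)-\Psi(v)=u^{p^2}-v+\alpha v^p+\beta^p v^{p^2}$, and after the harmless reindexing $(\alpha,\beta)\mapsto(-\alpha,-\beta)$ of the union this is the second formula.

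I do not expect a genuine obstacle at this stage, since the statement is merely the assembly of results already established. What little is left to verify is routine: that the infinitesimality of $U_n$ licenses the use of Corollary~\ref{simplified torsor decomposition} inside Proposition~\ref{cohomology iterated semidirect product}; that $\gcd(\Phi_\alpha,\Psi_\alpha)=u$ in each case, which is clear because $\Phi_\alpha=u^{p^n}$ while $\Psi_\alpha$ has a nonzero linear term, so Proposition~\ref{twist additive group} indeed identifies $U_{\Phi_\alpha,\Psi_\alpha}$ as a form of $\GG_a$; and the sign reindexing above. The real content, which one may invoke, sits upstream: in Propositions~\ref{dependence n=1} and~\ref{dependence n=2}, where one must exhibit the invariant coordinates $u,v$ in the coordinate ring of $P\times\GG_a$ and show that the subring they generate is the whole ring of invariants, and in the construction and freeness/transitivity analysis of the non-commutative torsors $P_{\alpha,\beta}$. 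Granting those, the present theorem follows by substitution.
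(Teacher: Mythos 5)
Your proposal is correct and follows essentially the same route as the paper, which likewise obtains the theorem by ``collecting'' Proposition~\ref{cohomology iterated semidirect product}, the identifications $H^1(S,\alpha_{p^n})=K/K^{p^n}$ together with the explicit torsors $P$ and $P_{\alpha,\beta}$ from \eqref{non-commutative torsor}, and Propositions~\ref{dependence n=1} and~\ref{dependence n=2}. Your remark about the sign reindexing $(\alpha,\beta)\mapsto(-\alpha,-\beta)$ in the $n=2$ case is a legitimate (and slightly more careful) observation: since the index set is stable under negation and the displayed subgroups for $(\alpha,\beta)$ and $(-\alpha,-\beta)$ coincide, the union is unchanged, so this is only a matter of parametrization.
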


Note that for $3\leq p^n\leq 4$, this gives back, in an intrinsic fashion, Queen's descriptions for quasielliptic curves (\textit{cf.} \cite{Queen 1971, Queen 1972}).

Also note that for $n=1$, the group $K/\{u^p-v+\alpha v^p\mid u,v\in K\}$ is trivial, provided that $K$ is separably closed or $\alpha\in K^p$.  It follows that the Frobenius map
$$
H^1(S,G_1)\lra H^1(S,G_1),\quad P\longmapsto P^{(p)}=P\otimes{}_FK
$$
is trivial, in the sense that it sends every class to the distinguished class. In particular, every twisted form $Y$ of $X_{p,1}$ is untwisted by Frobenius pullback and becomes a rational curve (compare with \cite{Hilario; Stoehr 2022}).  Likewise, for $n=2$, the group $K/\{u^{p^2}- v - \alpha v^p - \beta^p v^{p^2}\mid u,v\in K\}$ is trivial if $K$ is separably closed or $\alpha\in K^{p^2}$, $\beta\in K^p$.  Now the map $P\mapsto P^{(p^2)}$ is trivial, and every twisted form $Y$ of $X_{p,2}$ gets untwisted by the second Frobenius pullback.

With the notation from the theorem, let $T$ be a $G_n$-torsor and $\alpha\in K/K^{p^n}$ be the ensuing class, for $1\leq n\leq 2$. Write $\tilde{X}$ for the twisted form of $X=X_{p,n}$ corresponding to $T$.

\begin{proposition}
\label{twisted form regular}
With the above notation, the curve $\tilde{X}$ is regular provided that $\alpha\in K/K^{p^n}$ does not belong to $K^p/K^{p^n}$.
\end{proposition}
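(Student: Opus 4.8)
The plan is to deduce the regularity of $\tilde{X}={}^T\!X$ from Theorem~\ref{equivariantly normal} by showing that, as far as the twisted singular point is concerned, the $G_n$-torsor $T$ behaves like a torsor under the finite infinitesimal group scheme $U_n$. Recall from Propositions~\ref{inertia} and~\ref{cartier divisor} that the only singular point of $X=X_{p,n}$ is the rational point $x_0$, whose $G_n$-orbit is the effective Cartier divisor $Z=U_n/U_{n-1}\subset X$, and that $X\smallsetminus\{x_0\}$ is smooth over $K$. Since smoothness descends along field extensions, $\tilde{X}$ is smooth away from the twisted point, hence regular there; and by the argument in the proof of Theorem~\ref{sufficient regular twisting} (a reduced effective Cartier divisor on a one-dimensional scheme cuts out regular local rings in the ambient scheme), it suffices to check that the twisted Cartier divisor ${}^T\!Z\subset\tilde{X}$ is reduced.

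The next step is to identify ${}^T\!Z$. Since $\GG_a$ is normal in $G_n$ and contained in the stabilizer $\GG_a\rtimes U_{n-1}\rtimes\GG_m$ of $x_0$, it acts trivially on $Z=G_n/(\GG_a\rtimes U_{n-1}\rtimes\GG_m)$, so the $G_n$-action on $Z$ factors through $G_n/\GG_a=U_n\rtimes\GG_m$; hence ${}^T\!Z={}^{\bar{T}}\!Z$, where $\bar{T}=T/\GG_a$ is the induced $(U_n\rtimes\GG_m)$-torsor. By the first part of Proposition~\ref{useful facts cohomology}, the class of $\bar{T}$ comes from a $U_n$-torsor $Q$, that is, $\bar{T}\simeq Q\wedge^{U_n}(U_n\rtimes\GG_m)$; combining the general identity ${}^{\bar{T}}\!Z\simeq\bar{T}/(U_{n-1}\rtimes\GG_m)$ with the isomorphism $(U_n\rtimes\GG_m)/(U_{n-1}\rtimes\GG_m)\simeq U_n/U_{n-1}$ of left $U_n$-schemes yields
\[
{}^T\!Z\;\simeq\;Q\wedge^{U_n}\bigl(U_n/U_{n-1}\bigr)\;=\;U_{n-1}\backslash Q ,
\]
which is precisely the scheme ``$\bar{P}$'' of Theorem~\ref{equivariantly normal}, now for the torsor $Q$. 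Tracing the decomposition of Theorem~\ref{cohomology set} through the first part of Proposition~\ref{useful facts cohomology} and Corollary~\ref{simplified torsor decomposition}, the $U_n$-torsor $Q$ underlying $\bar{T}$ is the one indexed by the first coordinate of $[T]$, namely $Q=\Spec K[y]/(y^p-\alpha)$ when $n=1$ and $Q=P_{\alpha,\beta}$ from \eqref{non-commutative torsor} when $n=2$.

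It then remains to compute $U_{n-1}\backslash Q$ and to decide when it is reduced. For $n=1$ the group $U_0$ is trivial, so $U_{n-1}\backslash Q=Q=\Spec K[y]/(y^p-\alpha)$. For $n=2$ the subgroup $U_1=\alpha_p\subset U_2$ acts on $P_{\alpha,\beta}$ by $z\mapsto z+\lambda$ (put $\lambda_1=0$ in the action formula), and the ring of invariants is generated by $y$ and $z^p=\beta+\alpha y^p$, so $U_1\backslash Q=\Spec K[y]/(y^{p^2}-\alpha)$. In both cases $U_{n-1}\backslash Q=\Spec K[y]/(y^{p^n}-\alpha)$, and this algebra is reduced if and only if $\alpha\notin K^p$: if $\alpha=c^p$ then $y^{p^n}-\alpha=(y^{p^{n-1}}-c)^p$ has the nonzero nilpotent $y^{p^{n-1}}-c$, while if $\alpha\notin K^p$ then $y^{p^n}-\alpha$ is irreducible over $K$, so the quotient is a field. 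Since the hypothesis that the class of $\alpha$ in $K/K^{p^n}$ avoids $K^p/K^{p^n}$ amounts exactly to $\alpha\notin K^p$, we conclude that ${}^T\!Z$ is reduced and hence that $\tilde{X}$ is regular.

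The main obstacle is the middle step: the identification ${}^T\!Z\simeq U_{n-1}\backslash Q$. One must untangle how the purely cohomological description $H^1(S,G_n)=\bigcup H^1(S,{}^{P}\!\GG_a)$ interacts with the geometry of the orbit, checking that the $\GG_a$-twist contributes nothing (because $\GG_a$ acts trivially on $Z$), that the $\GG_m$-factor washes out through the contracted product $(U_n\rtimes\GG_m)/(U_{n-1}\rtimes\GG_m)\simeq U_n/U_{n-1}$, and that the surviving $U_n$-torsor is exactly the $P_{\alpha,\beta}$ of \eqref{non-commutative torsor}. Once this reduction is in place, the situation is the one already covered by Theorem~\ref{equivariantly normal}, and the remaining reducedness check is routine.
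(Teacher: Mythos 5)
Your overall route is the one the paper takes: reduce regularity of $\tilde{X}$ to reducedness of the twisted orbit of $x_0$, identify that orbit with $U_{n-1}\backslash Q$ for the $U_n$-torsor $Q$ underlying the class of $T$, and compute the quotient; your handling of the $\GG_a$- and $\GG_m$-factors and the case $n=1$ are fine. The gap is in the computation of $U_{n-1}\backslash Q$ for $n=2$: you take ``$U_1\subset U_2$'' to be the subgroup obtained by setting $\lambda_1=0$, i.e.\ the centre $\{1+\lambda_2F^2\}$, which is the kernel of $U_2\ra\alpha_{p^2}$. But the copy of $U_{n-1}$ that enters Theorem~\ref{equivariantly normal} is the inertia group scheme of $x_0$, and by Proposition~\ref{inertia} this is the \emph{canonical} copy $\{1+\lambda_1F\mid\lambda_1^p=0\}$ (cut out by $\lambda_2=0$, $\lambda_1^p=0$), not the centre. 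Quotienting $P_{\alpha,\beta}$ from \eqref{non-commutative torsor} by the correct subgroup, the invariants are generated by $y^p$ and $z-y^{p+1}$, with $(y^p)^p=\alpha$ and $(z-y^{p+1})^p=\beta$, so $U_1\backslash P_{\alpha,\beta}\simeq\Spec K[u,w]/(u^p-\alpha,\;w^p-\beta)$, of length $p^2$, and not $\Spec K[y]/(y^{p^2}-\alpha)$ (the latter is the quotient by the centre). One can cross-check this geometrically: the invariants $t=T^{p^2}$, $w_1=T^{p^2-p}-y^p$, $w_2=T^{p^2-1}-z-yT^{p^2-p}+y^{p+1}$, subject to $w_1^p=t^{p-1}-\alpha$ and $w_2^p=t^{p-1}w_1-\beta$, describe the twisted chart, and the twisted orbit is cut out there by $t$, giving the same two-variable scheme.

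Consequently the reducedness you need is not implied by the stated hypothesis: for $\beta\in K^p(\alpha)$ (e.g.\ $\beta=0$) the scheme $K[u,w]/(u^p-\alpha,w^p-\beta)$ is non-reduced even when $\alpha\notin K^p$, so Theorem~\ref{equivariantly normal} does not apply and your argument only establishes the proposition under the stronger hypothesis $\alpha\notin K^p$ and $\beta\notin K^p(\alpha)$. (The paper's proof asserts in one line that the quotient by $\GG_a\rtimes U_{n-1}$ has coordinate ring $K[T]/(T^{p^n}-\alpha)$, so your reading matches the paper's intent; but that assertion is exactly the point that must be checked against Proposition~\ref{inertia}, and the check yields the two-variable ring above.) Since reducedness of the twisted orbit is sufficient but not necessary for regularity, the remaining case requires a direct analysis of the local ring at the twisted point: for $p=2$ the chart above becomes $w_2^2=w_1^3+\alpha w_1+\beta$ and one verifies regularity by hand whenever $\alpha\notin K^2$; for $p\geq3$ and $\beta\in K^p(\alpha)$, however, the local ring has $\maxid=(t,w_2)$ with $w_2^p=t^{p-1}\cdot(\text{unit})$, which is not a regular local ring, so the case split is genuinely substantive and cannot be absorbed into the reducedness criterion. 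In short, the misidentification of the subgroup $U_{n-1}\subset U_n$ is a real gap, and fixing it changes both the computation and the range of validity of the argument.
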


\begin{proof} 
The $G_n$-torsor $T$ is induced from some torsor $P$ with respect to $\GG_a\rtimes U_n$, according to Proposition~\ref{useful facts cohomology}.  By construction, the class $\alpha\in K/K^{p^n}$ corresponds to the quotient $\bar{P}= (\GG_a\rtimes U_{n-1})\backslash P$, and the latter has coordinate ring $K[T]\,/(T^{p^n}-\alpha)$, where we also write $\alpha$ for the scalar rather than the class. The coordinate ring is reduced, in light of our assumption.  According to Theorem~\ref{equivariantly normal}, the curve $\tilde{X}$ is regular.
\end{proof}  

It should be possible to extend the above results to all $n\geq 1$. For this, one has to find an inductive description for the $U_n$-torsors, analogous to \eqref{non-commutative torsor}. The main problem is to cope with the non-commutativity involved in the torsors.


\end{document}